\renewcommand*{\eqref}[1]{%
\hyperref[{#1}]{\textup{\tagform@{\!\!\ref*{#1}}}}%
}\makeatother %add when putting this file to arXiv
\theoremstyle{plain}
\newtheorem{theorem}{Theorem}[section]
\newtheorem{lemma}[theorem]{Lemma}
\newtheorem{corollary}[theorem]{Corollary}
\theoremstyle{definition}
\newtheorem{remark}[theorem]{Remark}
\newtheorem{assumption}[theorem]{Assumption}
\newtheorem{theoremA}{Theorem}
\newcommand{\bignorm}[1]{{\left\|#1\right\|}}
\newcommand{\norm}[1]{{\|#1\|}}
\def\supp{\mathop{\mathrm{supp}}\nolimits}
\def\Im{\mathop{\mathrm{Im}}\nolimits}
\def\loc{\mathop{\mathrm{loc}}\nolimits}
\def\R{{\mathbb{R}}}
\def\N{{\mathbb{N}}}
\def\C{{\mathbb{C}}}
\def\H{{\mathcal{H}}}
\def\X{{\mathcal{X}}}
\def\Y{{\mathcal{Y}}}
\def\<{{\langle}}
\def\>{{\rangle}}
\def\ep{{\varepsilon}}
\def\ds{\displaystyle}
\DeclareMathOperator*{\slim}{s-lim}
\title[Scattering theory in homogeneous Sobolev spaces]{Scattering theory in homogeneous Sobolev spaces for Schr\"odinger and wave equations with rough potentials}
\author{Haruya Mizutani}
\address{Department of Mathematics, Graduate School of Science, Osaka University, Toyonaka, Osaka 560-0043, Japan.}
\email{haruya@math.sci.osaka-u.ac.jp}
\begin{document}
%\date{\today}

\begin{abstract}
We study the scattering theory for the Schr\"odinger and wave equations with rough potentials in a scale of homogeneous Sobolev spaces. The first half of the paper concerns with an inverse-square potential in both of subcritical and critical constant cases, which is a particular model of scaling-critical singular perturbations. In the subcritical case, the existence of the wave and inverse wave operators defined on a range of homogeneous Sobolev spaces is obtained. In particular, we have the scattering to a free solution in the homogeneous energy space for both of the Schr\"odinger and wave equations. In the critical case, it is shown that the solution is asymptotically a sum of a $n$-dimensional free wave and a rescaled two-dimensional free wave. The second half of the paper is concerned with a generalization to a class of strongly singular decaying potentials. We provides a simple criterion in an abstract framework to deduce the existence of wave operators defined on a homogeneous Sobolev space from the existence of the standard ones defined on a base Hilbert space. 
\end{abstract}
\maketitle

\section{Introduction}
The present paper is mainly concerned with the scattering theory for the following Schr\"odinger and wave equations in homogeneous Sobolev spaces:
\begin{align*}
i\partial_tu+\Delta u-V(x)u&=0,\ u|_{t=0}\in \dot H^s;\\ \partial_t^2v-\Delta v+V(x)v&=0,\ (v,\partial_t v)|_{t=0}\in \dot H^s\times \dot H^{s-1},
\end{align*}
where $\dot H^s=\dot H^s(\R^n)$ denotes the homogeneous Sobolev space of order $s$. 
The purpose of the present work is twofold. The first half is concerned with the inverse-square potential $V(x)=a|x|^{-2}$ with $a\ge-(n-2)^2/4$ and $n\ge3$, which can be regarded as a particular example of scaling-critical singular potentials. In the subcritical case $a>-(n-2)^2/4$, we prove the existence of the wave and inverse wave operators defined on $\dot H^{s}$ for the Schr\"odinger equation or on $\dot H^s\times \dot H^{s-1}$ for the wave equation, respectively, for a range of $s$ including the case $s=1$. In particular, we obtain the scattering to a free solution in the homogeneous energy space. In the critical case $a=-(n-2)^2/4$, we also study the asymptotic behaviors of the solutions which are  different from the subcritical case.  The second half of the paper is devoted to a generalization of the first half part to an abstract framework. Our method can be applied to a wide class of strongly singular potentials, including a rough potential $V\in L^{n/2}(\R^n)$ with a small negative part. 

Let us briefly explain the main results of the paper, describing some motivation from nonlinear scattering theory. Since a seminal work by Burq et al \cite{BPST1,BPST2} on the Strichartz estimates, the following nonlinear Schr\"odinger (NLS) and nonlinear wave (NLW)  equations with an inverse-square potential have attracted increasing attention from a scattering theory viewpoint (see \cite{BPST1,ZhZh,KMVZZ,LMM,Suzuki_EECT,Yang} and reference therein for \eqref{NLS} and \cite{MMZ,MSZ} for \eqref{NLW}, respectively): 
\begin{align}
\label{NLS}
i\partial_t u-H_au&=\lambda |u|^{p-1}u,\quad u|_{t=0}=u_0;\\
\label{NLW}
\partial_t^2 v+H_av&=-\lambda |v|^{p-1}v,\quad (v,\partial_t v)|_{t=0}=(v_0,v_1),
\end{align}
where $H_a=-\Delta+a|x|^{-2}$, $u:\R\times\R^n \to\C$ and $v:\R\times\R^n\to \R$. In particular, the following results on the nonlinear scattering  in the energy critical case were proved in \cite{KMVZZ,MMZ}.  For simplicity, we state the defocusing case only (see Remark \ref{remark_1_2} on the focusing case). 

%proposition
\begin{theoremA}[\cite{KMVZZ} for \eqref{NLS} and  \cite{MMZ} for \eqref{NLW}]
\label{theorem_A}
{\it Let $\lambda=1$, $p=1+4/(n-2)$ and suppose that
$$
n=\begin{cases}3&\text{for \eqref{NLS},}\\3\ \text{or}\ 4&\text{for \eqref{NLW},}\end{cases}\quad a>-\frac{(n-2)^2}{4}+c_n,\quad c_n=\begin{cases}\frac{1}{25}&\text{if}\ n=3,\\\frac19&\text{if}\ n=4.\end{cases}
$$
Then the following statements are satisfied:
\begin{itemize}
\item For any $u_0\in \dot H^1$, \eqref{NLS} has a unique global solution $u\in C(\R;\dot H^1)$ which {\it scatters to a linear solution} in $\dot H^1$ in the sense that there exist unique $u_\pm\in \dot H^1$ such that 
$$
\lim_{t\to\pm\infty}\norm{u(t)-e^{-itH_a}u_\pm}_{\dot H^1}=0.
$$
\item For any $\vec v_0=(v_0,v_1)\in \dot H^1\times L^2$, \eqref{NLW} has a unique global solution $\vec{v}=(v,\partial_tv)\in C(\R;\dot H^1\times L^2)$ which scatters to a linear solution in $\dot H^1\times L^2$, i.e., there exist unique $\vec{v}_\pm\in \dot H^1\times L^2$ such that
$$
\lim_{t\to\pm\infty}\norm{\vec v(t)-S_a(t)\vec v_\pm}_{\dot H^1\times L^2}=0.
$$
\end{itemize}}
\end{theoremA}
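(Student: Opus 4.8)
The natural route is the concentration--compactness/rigidity scheme of Kenig and Merle, adapted to the Schr\"odinger operator $H_a=-\Delta+a|x|^{-2}$. The first and most technical preparatory step is harmonic analysis: one must develop a Littlewood--Paley theory relative to $H_a$ and, crucially, establish the equivalence of homogeneous Sobolev norms $\norm{H_a^{s/2}f}_{L^2}\approx\norm{(-\Delta)^{s/2}f}_{L^2}$, together with its $L^p$ analogues and adapted fractional product and chain rule estimates, for the range of regularities dictated by the energy-critical nonlinearity $|u|^{4/(n-2)}u$. It is precisely the degeneration of these equivalences as $a$ approaches the critical value $-(n-2)^2/4$ that forces the quantitative lower bounds $a>-(n-2)^2/4+c_n$ with the explicit constants $c_n$, and this is also where the dimensional restrictions $n=3$ for \eqref{NLS} and $n\in\{3,4\}$ for \eqref{NLW} enter. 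Feeding these tools into the Strichartz estimates of Burq et al.\ \cite{BPST1,BPST2} for $e^{-itH_a}$ and $S_a(t)$ yields small-data global well-posedness and scattering, together with a long-time perturbation (stability) theory.

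Second, assuming scattering fails at some finite energy, I would run the profile-decomposition machinery to extract a minimal-energy nonscattering solution that is almost periodic modulo the symmetries surviving in this setting, namely the scaling symmetry but \emph{not} translations, since the potential is anchored at the origin. The structural point specific to $H_a$ is that a linear profile decomposition for the propagators must distinguish profiles according to their concentration scale and spatial center relative to the singularity: profiles driven to the origin genuinely feel $a|x|^{-2}$, whereas profiles escaping to spatial infinity (or living at scales where the potential is negligible) are governed asymptotically by the \emph{free} energy-critical equation, whose scattering is already known for defocusing NLS (Colliander--Keel--Staffilani--Takaoka--Tao in $n=3$, Ryckman--Visan and Visan in higher dimensions) and for defocusing NLW (Grillakis, Shatah--Struwe, Bahouri--Shatah). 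Decoupling these regimes and propagating the profiles nonlinearly produces the sought critical element.

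Finally, I would preclude the critical element by a rigidity argument exploiting the favorable defocusing sign. The remaining ingredient is a localized virial/Morawetz-type identity in which the potential contributes a term comparable to $\int a|x|^{-2}|u|^2\,dx$ whose sign is controlled through the sharp Hardy inequality by the hypothesis $a\ge-(n-2)^2/4$; one then shows that a nonzero almost-periodic solution with vanishing scattering norm contradicts the conservation of energy (or mass/momentum). For \eqref{NLW} the analogous step uses a conformal Morawetz estimate adapted to $S_a(t)$.

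I expect the main obstacle to be the interface between the harmonic analysis and the concentration--compactness step: establishing the norm equivalences and the refined Strichartz and refined Sobolev inequalities \emph{uniformly} across the different concentration regimes, while tracking exactly how the thresholds $c_n$ propagate through the profile decomposition and the stability theory. This is the delicate heart of the argument, whereas the rigidity step is comparatively robust once the critical element has been constructed.
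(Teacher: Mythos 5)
This statement is Theorem \ref{theorem_A}, which the paper does not prove: it is imported verbatim from \cite{KMVZZ} (for \eqref{NLS}) and \cite{MMZ} (for \eqref{NLW}), so there is no internal proof to compare against. Measured against the actual proofs in those references, your outline identifies the right strategy: the Kenig--Merle concentration--compactness/rigidity scheme, preceded by harmonic analysis adapted to $H_a$ (the equivalence of Sobolev norms from \cite{KMVZZ2}, Littlewood--Paley theory and product/chain rules for $H_a$, Strichartz estimates from \cite{BPST1,BPST2}), with a linear profile decomposition that must separate profiles concentrating at the origin from those escaping the singularity, the latter being matched to the free energy-critical equation whose scattering is classical. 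You also correctly locate the origin of the thresholds $c_n$ and the dimensional restrictions in the range of validity of the adapted Sobolev-space equivalences needed to close the nonlinear estimates.

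That said, what you have written is a research program, not a proof. Every one of the three stages you name is itself a substantial theorem whose execution is where the difficulty lives: the $L^p$ norm equivalences and the adapted fractional calculus (the content of \cite{KMVZZ2}), the construction of nonlinear profiles in the regime where the bubble core escapes to spatial infinity and the associated decoupling and stability argument, and the rigidity step (which for \eqref{NLW} in \cite{MMZ} also requires handling the focusing case below the ground state, and for \eqref{NLS} requires a Morawetz/virial identity whose potential term must be controlled via the sharp Hardy inequality uniformly over the almost-periodic trajectory). None of these is carried out or even reduced to a checkable claim, so the proposal cannot be verified as a proof of Theorem \ref{theorem_A}; it can only be verified as a correct table of contents for \cite{KMVZZ} and \cite{MMZ}. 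Within the present paper the appropriate treatment of this statement is exactly what the author does, namely to cite it.
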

Here $S_a(t)$ denotes the evolution group for the linear wave equation associated with $H_a$ (see \eqref{S_a(t)} below). This theorem shows that the global dynamics of \eqref{NLS} and \eqref{NLW} are asymptotically governed by the linear evolution groups $e^{-itH_a}$ and $S_a(t)$, respectively.  Note that $e^{-itH_a}$ and $S_a(t)$ still depend on the linear potential $a|x|^{-2}$. On the other hand, by employing for instance the Strichartz estimates proved in \cite{BPST1}, one can prove {\it the scattering to a free solution} for $e^{-itH_a}$ in $L^2$ and for $S_a(t)$ in $\dot H^{1/2}\times\dot H^{-1/2}$, respectively: if $n\ge3$, $a>-(n-2)^2/4$,  $u_0\in L^2$ and $\vec v_0\in \dot H^{1/2}\times\dot H^{-1/2}$, then there exist unique $u_\pm \in L^2$ and $\vec v_\pm\in \dot H^{1/2}\times\dot H^{-1/2}$ such that 
\begin{align}
\label{eqn_introduction_3}
\lim_{t\to\pm\infty}\norm{e^{-itH_a}u_0-e^{-itH_0}u_\pm}_{L^2}=0,\\
\label{eqn_introduction_4}
\lim_{t\to\pm\infty}\norm{S_a(t)\vec v_0-S_0(t)\vec v_\pm}_{\dot H^{1/2}\times \dot H^{-1/2}}=0,
\end{align}
respectively. However, these nonlinear and linear scattering results are not sufficient to ensure the scattering to a free solution for the solution to \eqref{NLS} or \eqref{NLW} since there is no embeddings between two different homogeneous Sobolev spaces. There seems to be no previous literature, except for \cite{MSZ}, on the scattering to a free solution for  \eqref{NLS} or \eqref{NLW} if $a,\lambda\neq0$. Recently, the authors of \cite{MSZ} showed \eqref{eqn_introduction_4} with $\dot H^{1/2}\times \dot H^{-1/2}$ replaced by $\dot H^{1}\times L^2$ for radial initial data, which implies the scattering to a free solution in $\dot H^{1}\times L^2$ for the solution to \eqref{NLW} under the radial symmetry and conditions in Theorem \ref{theorem_A}. 

In the first half part, we show the scattering to a free solution in the homogeneous energy space without the radial symmetry, namely \eqref{eqn_introduction_3} and \eqref{eqn_introduction_4} with $L^2$ and $\dot H^{1/2}\times \dot H^{-1/2}$ replaced by $\dot H^1$ and $\dot H^1\times L^2$, respectively. Together with Theorem \ref{theorem_A}, this leads the following result: 

\begin{corollary}
\label{corollary_1_1}
Let $u(t)$ and $\vec v(t)$ be the global solutions to \eqref{NLS} and \eqref{NLW}, respectively,  obtained by Theorem \ref{theorem_A}. 
Then, there exist $u_\pm\in \dot H^1$ such that $u(t)=e^{-itH_0}u_\pm+o(1)$ in $\dot H^1$ as $t\to \pm\infty$. Similarly, there exist $\vec v_\pm\in \dot H^1\times L^2$ such that $\vec v(t)=S_0(t)\vec v_\pm+o(1)$ in $\dot H^1\times L^2$ as $t\to \pm\infty$. 
\end{corollary}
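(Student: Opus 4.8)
The plan is to obtain the corollary as a direct consequence of Theorem \ref{theorem_A} combined with the linear scattering statement established in the first half of the paper, the two being glued by a single triangle inequality in the relevant energy space. The point is that Theorem \ref{theorem_A} already gives scattering of the nonlinear solution to a \emph{linear} $H_a$-evolution in $\dot H^1$, while the first-half result gives scattering of the linear $H_a$-evolution to a \emph{free} $H_0$-evolution in the same space; composing these is immediate.

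For the Schr\"odinger equation, Theorem \ref{theorem_A} first provides asymptotic states $w_\pm\in\dot H^1$ with
\[
\lim_{t\to\pm\infty}\norm{u(t)-e^{-itH_a}w_\pm}_{\dot H^1}=0.
\]
I would then apply the first-half result on the existence of the inverse wave operator on $\dot H^1$: for each $w_\pm\in\dot H^1$ there is a unique $u_\pm\in\dot H^1$, namely the image of $w_\pm$ under the inverse wave operator, such that
\[
\lim_{t\to\pm\infty}\norm{e^{-itH_a}w_\pm-e^{-itH_0}u_\pm}_{\dot H^1}=0.
\]
Summing these two estimates by the triangle inequality in $\dot H^1$ yields $u(t)=e^{-itH_0}u_\pm+o(1)$ in $\dot H^1$, as claimed. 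The wave equation is handled identically, with the product energy space $\dot H^1\times L^2$ and the group $S_a(t)$ replacing $\dot H^1$ and $e^{-itH_a}$: Theorem \ref{theorem_A} gives $\vec w_\pm$ with $\vec v(t)=S_a(t)\vec w_\pm+o(1)$ in $\dot H^1\times L^2$; the first-half result furnishes $\vec v_\pm\in\dot H^1\times L^2$, obtained by applying the inverse wave operator to $\vec w_\pm$, with $S_a(t)\vec w_\pm=S_0(t)\vec v_\pm+o(1)$; and the triangle inequality in $\dot H^1\times L^2$ closes the argument.

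At the level of the corollary there is no substantive obstacle: the only point requiring attention is the compatibility of the function spaces. Theorem \ref{theorem_A} produces asymptotic states lying precisely in the energy space $\dot H^1$ (respectively $\dot H^1\times L^2$), so the inverse wave operator must be available on that same space, i.e.\ the first-half result must cover the endpoint $s=1$. This is exactly the energy-space case emphasized in the introduction, and it is what distinguishes the present statement from the scattering in $L^2$ (resp.\ $\dot H^{1/2}\times\dot H^{-1/2}$) recorded in \eqref{eqn_introduction_3}--\eqref{eqn_introduction_4}, for which no embedding argument would transfer the conclusion to $\dot H^1$. All the genuine difficulty is therefore confined to the linear half, where one must upgrade the free-to-$H_a$ comparison from the $L^2$-type scattering that follows from the Strichartz estimates of \cite{BPST1} to scattering in the energy space $\dot H^1$.
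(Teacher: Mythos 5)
Your proposal is correct and is exactly the argument the paper intends: Theorem \ref{theorem_A} gives scattering to the linear $H_a$-evolution in the energy space, Theorem \ref{theorem_2_2} with $s=1$ (admissible since $\sigma>1$ in the subcritical case) supplies the inverse wave operator on $\dot H^1$ (resp. $\dot H^1\times L^2$) to pass from the $H_a$-evolution to the free one, and the triangle inequality combines the two. You also correctly identify that the whole content lies in the linear endpoint case $s=1$, which is what Section \ref{section_2} establishes.
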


%remark
\begin{remark}
\label{remark_1_2}
We here give a few remarks on Corollary \ref{corollary_1_1}. 
\begin{itemize}
\item[(1)] In fact we prove that \eqref{eqn_introduction_3} and \eqref{eqn_introduction_4} hold with $L^2$ and $\dot H^{1/2}\times \dot H^{-1/2}$ replaced by $\dot H^s$ and $\dot H^s\times \dot H^{s-1}$, respectively, for a range of $s$ including the cases $s=1,1/2$. 
\item[(2)] The scattering to a linear solution in the homogeneous energy space for the focusing ($\lambda=-1$), energy critical case with the energy less than that of the ground state has been also studied by \cite{Yang} for \eqref{NLS} (under  the radial symmetry) and \cite{MMZ} for \eqref{NLW}, respectively. For these models, one can also obtain the scattering to a free solution by the same argument as above. 
\end{itemize}
\end{remark}

In the first half part, the critical case $a=-(n-2)^2/4$ is also studied. We show in the critical case that the radial part of $e^{-itH_a}u_0$ coincides with a rescaled two-dimensional free solution $U^*e^{it\Delta_{\R^2}}Uu_0$ where $U$ is a unitary from $L^2_{\mathrm{rad}}(\R^n)$ to $L^2_{\mathrm{rad}}(\R^2)$, while the other part scatters to a free solution $e^{-itH_0}u_\pm$. A similar property for $S_a(t)$ is also proved. This shows that the global behavior for $e^{-itH_a}$ or $S_a(t)$ in the critical case is different from the subcritical case. 

The argument in the first half part uses several specific properties of $H_a$. Among others, the equivalence between the homogeneous Sobolev norm $\norm{H_a^{s/2}f}_{L^2}$ associated with $H_a$ and the standard one $\norm{f}_{\dot H^s}$ established by Killip et al \cite{KMVZZ2} plays an important role. More precisely, a crucial point to prove the scattering in $\dot H^s$ is that such a norm equivalence holds for some $s'$ satisfying $s<s'$. A similar argument is also used in case of the wave equation. However, it seems to the author that this condition is too strong, especially when generalizing to a wide class of strongly singular potentials or to an abstract framework. 
In the second half part, an alternative approach to ensure the scattering in $\dot H^s$ is established. We provide a simple criterion in an abstract framework to deduce the existence of the wave operators defined on a homogeneous Sobolev space from the existence of the standard ones defined on a base Hilbert space. This criterion requires the norm equivalence only up to $\dot H^s$ and a few additional assumptions. For instance, in case of the Schr\"odinger equation $i\partial_t u-H_0u-V(x)u=0$, the existence of the wave and inverse wave operators, 
\begin{align*}
\slim_{t\to\pm\infty}e^{it(H_0+V)}e^{-itH_0},\quad
\slim_{t\to\pm\infty}e^{itH_0}e^{-it(H_0+V)}P_{\mathrm{ac}}(H_0+V),
\end{align*}
defined as the strong limits on $\dot H^1$, can be deduced from the existence of the standard ones defined on $L^2$ (which is well understood), the norm equivalence between $\norm{f}_{\dot H^1}$ and $\norm{(H_0+V)^{1/2}f}_{L^2}$ and the compactness on $L^2$ of the operator
$$
\<H_0\>^{1/2}\left\{(H_0+i)^{-1}-(H_0+V+i)^{-1}\right\}=\<H_0\>^{1/2}(H_0+V+i)^{-1}V(H_0+i)^{-1}.
$$
The last condition is much easier to prove than the norm equivalence beyond $\dot H^1$. For instance, it follows from the $H_0$-form compactness of $V$. Moreover, our criterion can be applied to not only a pair of the Schr\"odinger evolution groups $(e^{-itH_0},e^{-it(H_0+V)})$, but also $(e^{-itf(H_0)},e^{-itf(H_0+V)})$ for a class of functions $f$, which particularly includes the case $f(\lambda)=\lambda^\alpha$ with $\alpha>0$. Hence it can be applied to prove the existence of wave operators on homogeneous Sobolev spaces for the half wave equations $i\partial_t u\pm\sqrt{H_0+V}u=0$, as well as the wave equation $\partial_t^2v+H_0v+Vv=0$. The second half part is a continuation of our previous work \cite{Miz_PAMS} in which the scattering theory for Schr\"odinger equations has been studied in a scale of inhomogeneous Sobolev spaces. 

The mathematical study of linear scattering theory, especially the wave operators, has a long history and there is a huge amount of literatures. We only refer to some monographs \cite{ReSi_III,ReSi_IV,Yafaev_I,Yafaev_II} for the Schr\"odinger equation and \cite{LaPh} for the wave equation. There are also many works on the scattering to a linear solution (not a free solution) for the nonlinear NLS equation with a rough linear potential having a small negative part (see, in addition to aforementioned papers, \cite{Hong,BaVi,Laf,IkIn} and references therein). The long time asymptotics of small solutions to  the NLS equations with a localized large linear potential has been also extensively studied (see {\it e.g.}, \cite{SoWe,TsYa,GNT,Mizumachi,CuMa,Nakanishi} and references therein). It however seems to the author that the linear scattering theory in homogeneous Sobolev spaces has attracted less interest except for the wave equation in the energy space. For the linear scattering theory in inhomogeneous Sobolev spaces, in addition to our previous work \cite{Miz_PAMS}, we refer to \cite{FoVi,TzVi}. %Besides the case with the inverse-square potential, we believe that the results in the present paper have several potential applications to the study of scattering theory for NLS and NLW equations with a linear potential. 

\subsection*{Organization of the paper} Section \ref{section_2} is concerned with the scattering theory with the inverse-square potential. The abstract criterion is given in Section \ref{section_3}. Some applications to the Schr\"odinger and wave equations with rough potentials are studied in \ref{section_4}. In Appendix \ref{appendix_A}, we record the invariance principle of the wave operators which will play an important role in case of the wave equation. 

\subsection*{Notation} The following standard notations are used in the paper: 
\begin{itemize}
\item $H^s=H^s(\R^n)$ and $\dot H^s=\dot H^s(\R^n)$ denote the $L^2$-based inhomogeneous and homogeneous Sobolev spaces of order $s$, respectively;
\item $\mathbb B (\X,\Y)$ (resp. $\mathbb B_\infty (\X,\Y)$) denotes the family of bounded (resp. compact) operators from $\X$ to $\Y$. We also set $\mathbb B (\X)=\mathbb B (\X,\X)$ and $\mathbb B_\infty(\X)=\mathbb B_\infty(\X,\X)$. 
\item Given a self-adjoint operator $A$, $P_{\mathrm{ac}}(A)$ denotes the projection onto the absolutely continuous subspace of $A$. Moreover $E_A(\cdot)$ is the spectral measure associated with $A$; 
\item $\sigma(A),\rho(A)\subset \C$ denote the spectrum and the resolvent set for an operator $A$, respectively. 
\end{itemize}

\section{Scattering theory for the inverse-square potential}
\label{section_2}
This section is devoted to a scattering theory for the Schr\"odinger and wave equations with the inverse-square potential. Throughout the section we always assume $n\ge3$. Let $$H_a:=-\Delta+a|x|^{-2}$$ be the Schr\"odinger operator with the inverse-square potential on $\R^n$, where $\Delta=\sum_{j=1}^n\partial_j^2$ is the Laplacian. To be more precise, thanks to the sharp Hardy inequality
\begin{align}
\label{Hardy}
\frac{(n-2)^2}{4}\int |x|^{-2}|u|^2dx\le \int |\nabla u|^2dx,\quad u\in C_0^\infty(\R^n),
\end{align}
 the quadratic form 
$$
Q_{a}(u)=\int \left(|\nabla u|^2+a|x|^{-2}|u|^2\right)dx,\quad u\in C_0^\infty(\R^n),
$$
is non-negative and closable if $a\ge -(n-2)^2/4$. Then $H_a$ is defined as its Friedrichs extension, that is a unique self-adjoint operator generated by the closure of $Q_{a}$. Since $H_a$ is non-negative, one can define its square root $|D_a|=H_a^{1/2}$ via the spectral theorem, namely
$$
|D_a|=\int_0^\infty \lambda^{1/2}dE_{H_a}(\lambda)
$$
where $dE_{H_a}$ is the spectral measure associated with $H_a$. By Hardy's inequality \eqref{Hardy}, 
\begin{align}
\label{2_2}
Q_a(u)\le C\norm{u}_{\dot H^1}^2,\quad u\in C_0^\infty(\R^n)
\end{align}
for all $a\ge -(n-2)^2/4$. Moreover, one has
\begin{align}
\label{2_3}
\norm{u}_{\dot H^1}^2\le CQ_a(u),\quad u\in C_0^\infty(\R^n)
\end{align}
if and only if $a> -(n-2)^2/4$. In particular, the form domain $D(|D_a|)$ of $H_a$ coincides with $H^1$ if and only if $a>-(n-2)^2/4$. When $a=-(n-2)^2/4$, $D(|D_a|)$ is strictly larger than $H^1$ due to the optimality of Hardy's inequality \eqref{Hardy}. We call the case $a=-(n-2)^2/4$ critical and the case $a>-(n-2)^2/4$ subcritical, respectively. 

Let $\dot \H^s_a=\dot \H^s_a(\R^n)$ be the homogeneous Sobolev space of order $s$ adapted to the operator $H_a$, that is the completion of $C_0^\infty(\R^n)$ with respect to the following pseudo-norm
$$
\norm{f}_{\dot\H^s_a}:=\norm{|D_a|^sf}_{L^2}.
$$
From the above argument, for $a>-(n-2)^2/4$ and $|s|\le1$, $\dot\H^s_a$ is well defined and coincides with the standard space $\dot H^s$. Indeed, this follows from \eqref{2_2} and \eqref{2_3} if $s=1$. The duality argument implies the case $s=-1$. The case $-1<s<1$ follow by interpolating between these two cases. Moreover, the following norm equivalence proved  by \cite{KMVZZ2} plays an essential role in this section. 

%{lemma}
\begin{lemma}[{\cite[Theorem 1.2]{KMVZZ2}}]
\label{lemma_2_1}
For $a\ge -(n-2)^2/4$ we set 
$$\sigma=\sigma(n,a):=1+\sqrt{\frac{(n-2)^2}{4}+a}.$$
Suppose that
$
0<s<\min\{n/2,\ 2,\ \sigma\}
$. Then there exist constants $C_2>C_2>0$ such that
$$
C_1\norm{f}_{\dot H^s}\le \norm{|D_a|^sf}\le C_2\norm{f}_{\dot H^s},\quad f\in C_0^\infty(\R^n). 
$$
\end{lemma}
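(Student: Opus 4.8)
The plan is to recast the two-sided bound as the $L^2$-boundedness of the two operators $|D|^sH_a^{-s/2}$ and $|D_a|^sH_0^{-s/2}$, where $H_0=-\Delta$ and $|D|=(-\Delta)^{1/2}=H_0^{1/2}$. Indeed, writing $g=|D_a|^sf$ turns the left inequality $\norm{|D|^sf}\lesssim\norm{|D_a|^sf}$ into $\norm{|D|^sH_a^{-s/2}g}\lesssim\norm{g}$, while writing $g=|D|^sf$ turns the right inequality into $\norm{|D_a|^sH_0^{-s/2}g}\lesssim\norm{g}$; the two estimates have the same structure with the roles of $H_0$ and $H_a$ interchanged, so one machinery handles both. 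The first technical step is to represent the fractional powers through the heat semigroups by subordination,
\begin{equation*}
H_a^{-s/2}=\frac{1}{\Gamma(s/2)}\int_0^\infty t^{s/2-1}e^{-tH_a}\,dt,\qquad H_a^{s/2}=\frac{1}{|\Gamma(-s/2)|}\int_0^\infty \bigl(e^{-tH_a}-\Id\bigr)\frac{dt}{t^{1+s/2}},
\end{equation*}
the negative power being defined on the dense set where $H_a$ is spectrally supported away from $0$ and $\infty$, and the positive (Balakrishnan) power being legitimate precisely because $0<s/2<1$.

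The analytic heart of the argument is a Gaussian upper bound for the heat kernel of $H_a$ carrying a weight that records the singularity at the origin:
\begin{equation*}
0\le e^{-tH_a}(x,y)\lesssim t^{-n/2}\Bigl(1+\tfrac{\sqrt t}{|x|}\Bigr)^{\beta}\Bigl(1+\tfrac{\sqrt t}{|y|}\Bigr)^{\beta}e^{-|x-y|^2/(ct)},\qquad \beta:=\max\Bigl\{0,\tfrac{n}{2}-\sigma\Bigr\}.
\end{equation*}
When $a\ge0$ the potential is nonnegative, so by positivity the semigroup is dominated by the free one and $\beta=0$; the delicate case is $-(n-2)^2/4\le a<0$, where $\beta=\frac{n-2}{2}-\sqrt{(n-2)^2/4+a}>0$ reflects the enhancement of the kernel near $x=0$. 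I would obtain this bound from the parabolic perturbation theory for Schr\"odinger operators with inverse-square potentials, exploiting the exact scaling $e^{-tH_a}(x,y)=t^{-n/2}e^{-H_a}(x/\sqrt t,y/\sqrt t)$ to reduce everything to the fixed-time kernel.

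Feeding this estimate into the subordination formula yields a pointwise bound for the Riesz-type kernel,
\begin{equation*}
0\le H_a^{-s/2}(x,y)\lesssim |x-y|^{s-n}\Bigl(1+\tfrac{|x-y|}{|x|}\Bigr)^{\beta}\Bigl(1+\tfrac{|x-y|}{|y|}\Bigr)^{\beta},
\end{equation*}
where the $t$-integral converges for $x\ne y$ thanks to the weighted Gaussian factor. Thus $H_a^{-s/2}$ differs from the free Riesz potential $H_0^{-s/2}(x,y)=c_{n,s}|x-y|^{s-n}$ only through the weight. I would then write $|D|^sH_a^{-s/2}=\Id+|D|^s\bigl(H_a^{-s/2}-H_0^{-s/2}\bigr)$ and control the remainder kernel by a Schur test (or, equivalently, by regarding $H_a$ as an operator with Gaussian bounds and invoking the associated spectral-multiplier and singular-integral calculus). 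The operator $|D_a|^sH_0^{-s/2}=H_a^{s/2}H_0^{-s/2}$ is handled symmetrically, now using the Balakrishnan representation of the positive power together with the same weighted heat kernel bound.

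The main obstacle, and the place where all three upper thresholds enter, is the behavior at the origin. Near $x=0$ the weighted kernel behaves like $|x|^{-\beta}$, and pairing it with the nonlocal smoothing operator $|D|^s$ forces the integrability condition $s+\beta<n/2$, which by the definition of $\beta$ is exactly $s<\sigma$; this is the genuinely sharp constraint, and a matching lower bound for the heat kernel shows the equivalence fails once $s\ge\sigma$. The condition $s<n/2$ is what makes $\dot H^s$ and the Riesz potentials meaningful in the $L^2$ framework, while $s<2$ plays a double role: it is what allows the fractional-power representations above (so that $0<s/2<1$), and it is forced structurally by the second-order nature of the perturbation, since for $s\ge2$ the term $a|x|^{-2}f$ is no longer controlled by $\norm{f}_{\dot H^s}$ and hence $\dot\H^s_a\ne\dot H^s$. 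Establishing the weighted heat kernel bound and then pushing the resulting weighted kernel estimate through $|D|^s$ up to the sharp exponent $\sigma$ is where the real work lies.
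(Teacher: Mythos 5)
The paper itself does not prove this lemma: it is imported verbatim from Killip--Murphy--Visan--Zheng \cite{KMVZZ2}, so there is no internal proof to compare against. Measured against the argument in that reference, your outline correctly identifies the engine --- the weighted Gaussian heat kernel bounds for $e^{-tH_a}$ with weight $\bigl(1+\sqrt t/|x|\bigr)^{\beta}$, $\beta=\max\{0,\tfrac n2-\sigma\}$ (due to Liskevich--Sobol and Milman--Semenov), the subordination formulas, and the resulting weighted Riesz-kernel bound --- and your accounting of where each of the thresholds $n/2$, $2$, $\sigma$ enters is accurate. Note also that the genuinely new content is the range $1<s<\min\{2,n/2,\sigma\}$: for $|s|\le1$ the equivalence already follows from Hardy's inequality, duality and interpolation (as the paper itself observes), and one cannot reach $s>1$ by interpolating against an $s=2$ endpoint because $D(H_a)\neq H^2$ when $a\neq0$.

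The genuine gap is the step ``control the remainder kernel by a Schur test.'' The operator $|D|^s\bigl(H_a^{-s/2}-H_0^{-s/2}\bigr)$ is not a positive-kernel operator: for $0<s<2$, $|D|^s$ acts by a singular integral with cancellation, so its composition with a kernel $K(x,y)$ cannot be estimated from a pointwise bound on $|K|$ alone --- one needs H\"older or gradient regularity of $K$ in $x$, i.e.\ gradient bounds on the heat kernel, which you have not claimed. Moreover, the only pointwise control you have on the difference $H_a^{-s/2}-H_0^{-s/2}$ is the sum of the two separate upper bounds, which near the diagonal is $\sim|x-y|^{s-n}$, exactly the free Riesz kernel; that exhibits no smallness and is in any case not Schur-testable against $|D|^s$. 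Extracting genuine off-diagonal smallness requires a Duhamel expansion $e^{-tH_a}-e^{-tH_0}=-a\int_0^te^{-(t-\tau)H_a}\,|x|^{-2}\,e^{-\tau H_0}\,d\tau$ or, as in \cite{KMVZZ2} and its companion paper on Sobolev spaces adapted to $H_a$, a Littlewood--Paley argument: Gaussian bounds give Bernstein estimates and a Mikhlin-type multiplier theorem for $H_a$, the two square functions are compared by summing dyadic frequency interactions with off-diagonal decay, and the constraint $s<\sigma$ appears precisely as the convergence condition for that double sum. Your parenthetical alternative (``invoking the associated spectral-multiplier and singular-integral calculus'') is the route that actually works; the Schur test as literally described is not.
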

By the same argument as above, Lemma \ref{lemma_2_1} implies that, for $|s|<\min\{n/2,\ 2,\ \sigma\}$, $\dot\H^s_a$ is well-defined and coincides with $\dot H^s$. 

Let $e^{-itH_a}$ and $S_a(t)$ be the evolution groups associated with the following Schr\"odinger and wave equations with the inverse-square potential, respectively: 
\begin{align*}
i\partial_t u-H_au&=0,\quad u|_{t=0}=u_0;\\
\partial_t^2 v+H_av&=0,\quad (v,\partial_t v)|_{t=0}=(v_0,v_1).
\end{align*}
where, in the matrix form, $S_a(t)$ is given by 
\begin{align}
\label{S_a(t)}
S_a(t)=\begin{pmatrix}\cos(t|D_a|)&|D_a|^{-1}\sin(t|D_a|)\\-|D_a|\sin(t|D_a|)&\cos(t|D_a|)\end{pmatrix}.
\end{align}
Note that these coincide with the free evolutions if $a=0$: $e^{-itH_0}=e^{it\Delta}$ and 
$$
S_0(t)=\begin{pmatrix}\cos(t|D|)&|D|^{-1}\sin(t|D|)\\-|D|\sin(t|D|)&\cos(t|D|)\end{pmatrix},\quad |D|=(-\Delta)^{1/2}.
$$
Since $e^{-itH_a}$ commutes with $H_a$, the spectral theorem implies that $e^{-itH_a}$ also commutes with $\varphi(H_a)$ for any $\varphi\in L^2_{\loc}(\R)$. In particular, choosing $\varphi(\lambda)=|\lambda|^{s/2}$, we see that $e^{-itH_a}$ extends to a unitary on $\dot \H^s_a$ if $|s|<\min\{n/2,\ 2,\ \sigma\}$. This property and Lemma \ref{lemma_2_1} imply that for any $|s|<\min\{n/2,\ 2,\ \sigma\}$, there exists $C>1$ independent of $t$ such that
\begin{align}
\label{2_5}
C^{-1}\norm{f}_{\dot H^s}\le \norm{e^{-itH_a}f}_{\dot H^s}\le C\norm{f}_{\dot H^s}. 
\end{align}
By the same argument, if $|s|<\min\{n/2,\ 2,\ \sigma\}$ and $s-1>-\min\{n/2,\ 2,\ \sigma\}$ then $S_a(t)$ extends to a unitary on $\dot {\mathcal H}_a^s\times \dot {\mathcal H}_a^{s-1}$ and satisfies
\begin{align}
\label{2_6}
C^{-1}\norm{\vec f}_{\dot H^s\times \dot H^{s-1}}\le \norm{S_a(t)\vec f}_{\dot H^s\times \dot H^{s-1}}\le C\norm{\vec f}_{\dot H^s\times \dot H^{s-1}}. 
\end{align}

\subsection{Subcritical case}
Now we are ready to state the main result for the subcritical case.
%theorem
\begin{theorem}	
\label{theorem_2_2}
If $a>-(n-2)^2/4$ and 
$
|s|<\min\{n/2,2,\sigma\}
$ then the following statements hold.
\begin{itemize}
\item The following Schr\"odinger-type wave and inverse wave operators defined on $\dot H^s$ exist: 
\begin{align*}
\mathring W_s^\pm(H_a,H_0)&:=\slim_{t\to\pm\infty}e^{itH_a}e^{-itH_0},\\
\mathring W_s^\pm(H_0,H_a)&:=\slim_{t\to\pm\infty}e^{itH_0}e^{-itH_a},
\end{align*}
where $\slim_{t\to\pm\infty}$ denotes the strong  limit on $\dot H^s$. %They satisfy the intertwining property\begin{align}\label{theorem_2_2_1}\mathring W_s^\pm(H_a,H_0)H_0=H_a\mathring W_s^\pm(H_a,H_0),\quad \mathring W_s^\pm(H_0,H_a)H_a=H_0\mathring W_s^\pm(H_0,H_a). \end{align}
In particular, for any $u_0\in \dot H^s$, there exist unique $u_\pm \in \dot H^s$ such that 
\begin{align}
\label{theorem_2_2_1}
\lim_{t\to \pm\infty}\norm{e^{-itH_a}u_0-e^{-itH_0}u_\pm}_{\dot H^s}=0.
\end{align}
\item Assume in addition that $s>1-\min\{n/2,\ 2,\ \sigma\}$. Then the following wave-type  wave and inverse wave operators defined on $\dot H^{s}\times \dot H^{s-1}$ exist: 
\begin{align*}
\mathring \Omega_s^\pm(S_a,S_0)&:=\slim_{t\to\pm\infty}S_a(-t)S_0(t),\\
\mathring \Omega_s^\pm(S_0,S_a)&:=\slim_{t\to\pm\infty}S_0(-t)S_a(t),
\end{align*}
where $\slim_{t\to\pm\infty}$ denotes the strong limit  on $\dot H^{s}\times \dot H^{s-1}$. In particular, for any $\vec v_0\in \dot H^s\times\dot H^{s-1}$, there exist unique $\vec v_\pm\in \dot H^s\times\dot H^{s-1}$ such that 
\begin{align}
\label{theorem_2_2_2}
\lim_{t\to \pm\infty}\norm{S_a(t)\vec v_0-S_0(t)\vec v_\pm}_{\dot  H^s\times\dot H^{s-1}}=0.\end{align}
\end{itemize}
\end{theorem}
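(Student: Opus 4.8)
The plan is to deduce each homogeneous-space statement from the corresponding $L^2$ (or half-wave) statement, which is classical, by combining the norm equivalence of Lemma~\ref{lemma_2_1} with an elementary interpolation.

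\textbf{Schrödinger operators.} I would take for granted the existence and completeness of the standard wave operators $W^\pm(H_a,H_0)=\slim_{t\to\pm\infty}e^{itH_a}e^{-itH_0}$ and $W^\pm(H_0,H_a)=\slim_{t\to\pm\infty}e^{itH_0}e^{-itH_a}$ on $L^2$; these follow from Kato's smooth perturbation theory, using that $|x|^{-1}$ is both $H_0$- and $H_a$-smooth and that $a|x|^{-2}=a|x|^{-1}\cdot|x|^{-1}$, and moreover $P_{\mathrm{ac}}(H_a)=\Id$ since $H_a$ has purely absolutely continuous spectrum. The key observation is that, by \eqref{2_5}, the family $e^{itH_a}e^{-itH_0}$ (and likewise $e^{itH_0}e^{-itH_a}$) is uniformly bounded on every $\dot H^{s'}$ with $|s'|<\min\{n/2,2,\sigma\}$, while it converges strongly on $\dot H^0=L^2$. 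I would then fix $f$ with $\hat f\in C_0^\infty(\R^n\setminus\{0\})$; such $f$ form a dense subset of $\dot H^s$ (for $|s|<n/2$) and lie in $L^2\cap\bigcap_{s'}\dot H^{s'}$. Setting $a_t:=e^{itH_a}e^{-itH_0}f$, logarithmic convexity of homogeneous Sobolev norms gives
\[
\norm{a_t-a_{t'}}_{\dot H^s}\le \norm{a_t-a_{t'}}_{L^2}^{1-\theta}\norm{a_t-a_{t'}}_{\dot H^{s'}}^{\theta},\qquad s=\theta s',\ \theta\in(0,1),
\]
where $s'$ is chosen in $(s,\min\{n/2,2,\sigma\})$ when $s>0$ and in $(-\min\{n/2,2,\sigma\},s)$ when $s<0$; the strict inequality $|s|<\min\{n/2,2,\sigma\}$ guarantees such an $s'$ exists. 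The second factor is bounded by $2C\norm{f}_{\dot H^{s'}}$ via \eqref{2_5} and the first tends to $0$ as $t,t'\to\pm\infty$, so $a_t$ is Cauchy, hence convergent, in $\dot H^s$. Uniform boundedness on $\dot H^s$ and density upgrade this to all $f\in\dot H^s$, yielding $\mathring W_s^\pm(H_a,H_0)$; applying the same argument to $e^{itH_0}e^{-itH_a}$ gives $\mathring W_s^\pm(H_0,H_a)$, and \eqref{theorem_2_2_1} follows. The case $s=0$ is the $L^2$ statement itself.

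\textbf{Wave operators.} For the second-order equation I would first diagonalize both flows. Writing $\zeta_\pm=\tfrac12(v_0\mp i|D|^{-1}v_1)$ and $z_\pm=\tfrac12(v_0\mp i|D_a|^{-1}v_1)$, the maps $\Phi_0:\vec v\mapsto(\zeta_+,\zeta_-)$ and $\Phi_a:\vec v\mapsto(z_+,z_-)$ are isomorphisms of $\dot H^s\times\dot H^{s-1}$ onto $\dot H^s\times\dot H^s$ (bounded with bounded inverse by the norm equivalences, granted $|s|<\min\{n/2,2,\sigma\}$ and $s>1-\min\{n/2,2,\sigma\}$, which together force $|s-1|<\min\{n/2,2,\sigma\}$), and they conjugate $S_a(t)$, $S_0(t)$ to $\mathrm{diag}(e^{it|D_a|},e^{-it|D_a|})$ and $\mathrm{diag}(e^{it|D|},e^{-it|D|})$. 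Expanding the conjugated product $\Phi_a S_a(-t)S_0(t)\Phi_0^{-1}$ produces, through the middle factor $\Phi_a\Phi_0^{-1}$ whose entries are $\tfrac12(1\pm B)$ with $B:=|D_a|^{-1}|D|$, two types of terms. The \emph{opposite-sign} terms $e^{\mp it|D_a|}e^{\pm it|D|}$ are half-wave wave operators; their strong convergence on $\dot H^s$ I would establish exactly as above, using that $e^{\pm it|D_a|}$ is uniformly bounded on $\dot H^s$ by Lemma~\ref{lemma_2_1} (as in \eqref{2_5}) together with the $L^2$ existence of $\slim e^{\mp it|D_a|}e^{\pm it|D|}$, the latter obtained from $W^\pm(H_a,H_0)$ through the invariance principle (Appendix~\ref{appendix_A}) with $\varphi(\lambda)=\sqrt\lambda$. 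Using the intertwining relation $W|D|=|D_a|W$ for the half-wave operator $W=\slim e^{-it|D_a|}e^{it|D|}$, these opposite-sign contributions assemble into the diagonal operator $\mathrm{diag}(W,\widetilde W)$.

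The main obstacle is the remaining \emph{same-sign} terms $e^{\mp it|D_a|}(1-B)e^{\mp it|D|}$ appearing off the diagonal. Individually $e^{-it|D_a|}e^{-it|D|}$ converges only weakly to $0$, so the whole point is that the coefficient $1-B=|D_a|^{-1}(|D_a|-|D|)$ must upgrade this to strong convergence to $0$. Commuting the homogeneous factors, one reduces to showing
\[
\left(|D|^{-1}-|D_a|^{-1}\right)e^{-it|D|}\eta\longrightarrow 0\quad\text{strongly in }\dot H^s,\qquad t\to\pm\infty,
\]
for $\eta$ with $\hat\eta\in C_0^\infty(\R^n\setminus\{0\})$. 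I would prove this by a local-decay argument: the resolvent representation $|D|^{-1}-|D_a|^{-1}=\tfrac1\pi\int_0^\infty (H_0+\mu)^{-1}\,a|x|^{-2}\,(H_a+\mu)^{-1}\mu^{-1/2}\,d\mu$ exhibits the operator as a locally compact one concentrated near the origin (the potential $a|x|^{-2}$ is sandwiched between two resolvents), while $e^{-it|D|}\eta$ escapes every ball, i.e. $\norm{\chi_{\{|x|<R\}}e^{-it|D|}\eta}_{L^2}\to0$ by non-stationary phase (equivalently by the $H_0$-smoothness of $|x|^{-1}$ and RAGE). Splitting the operator into its near- and far-origin parts and letting $R\to\infty$ forces the limit to vanish. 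Once these terms are shown to disappear, conjugating back by the fixed bounded map $\Phi_a^{-1}$ yields $\mathring\Omega_s^\pm(S_a,S_0)=\Phi_a^{-1}\,\mathrm{diag}(W,\widetilde W)\,\Phi_0$; the inverse wave operator $\mathring\Omega_s^\pm(S_0,S_a)$ is treated symmetrically, and \eqref{theorem_2_2_2} follows. I expect this upgrade of weak to strong decay for the off-diagonal terms to be the genuinely delicate step, the Schrödinger case and the opposite-sign wave terms being essentially automatic once Lemma~\ref{lemma_2_1} and the $L^2$ theory are in place.
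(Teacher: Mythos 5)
Your treatment of the Schr\"odinger-type operators is essentially the paper's own proof (Steps 1--2): Cook's method plus the $H_0$- and $H_a$-smoothness of $|x|^{-1}$ for the $L^2$ case, then the logarithmic convexity $\norm{\cdot}_{\dot H^s}\le\norm{\cdot}_{L^2}^{1-\theta}\norm{\cdot}_{\dot H^{s'}}^{\theta}$ combined with the uniform $\dot H^{s'}$-bound \eqref{2_5} for some $s<s'<\min\{n/2,2,\sigma\}$, and density. Likewise, obtaining the half-wave operators $\slim e^{\pm it|D_a|}e^{\mp it|D|}$ from the invariance principle with $f(\lambda)=\sqrt\lambda$ is exactly the paper's Step 3. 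Where you diverge is the passage from the half-wave operators to $S_a$, $S_0$. The paper expands only one propagator: it writes $S_a(t)\vec v_0$ in its own $|D_a|$-diagonalization, $v(t)=\frac12e^{-it|D_a|}z_++\frac12e^{it|D_a|}z_-$ with $z_\pm=v_0\pm i|D_a|^{-1}v_1$, replaces each piece by $e^{\mp it|D|}\mathring W_s^\pm z_\pm+o(1)$ (same sign of time in both exponentials), and reassembles the result as $S_0(t)\vec v_+$. In this bookkeeping no ``same-sign'' product $e^{\mp it|D_a|}e^{\mp it|D|}$ ever appears. You instead conjugate the product $S_a(-t)S_0(t)$ by the two \emph{different} diagonalizations $\Phi_a$, $\Phi_0$, and the non-diagonal middle factor $\Phi_a\Phi_0^{-1}$ then manufactures precisely those cross terms, which you must show vanish. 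That entire difficulty is avoidable.

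Moreover, the argument you sketch for the cross terms has a genuine gap. The statement you need, that $(|D|^{-1}-|D_a|^{-1})e^{-it|D|}\eta\to0$ strongly, is true (it follows a posteriori from the theorem), but your proof of it relies on the potential being ``concentrated near the origin'' so that a near/far splitting with $R\to\infty$ closes the estimate. The potential $a|x|^{-2}$ is scaling-critical: its restriction to $\{|x|>R\}$ does \emph{not} become small as $R\to\infty$ in any of the norms ($L^{n/2,\infty}$, or as a form perturbation of $H_0$) that control the sandwiched operator $(H_0+\mu)^{-1}a|x|^{-2}(H_a+\mu)^{-1}$ uniformly in $\mu$ -- by dilation invariance the far part has the same size for every $R$ once one ranges over $\mu>0$. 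In addition, the subordination integral $\int_0^\infty(\cdots)\mu^{-1/2}\,d\mu$ is delicate near $\mu=0$ (the naive operator bounds give a non-integrable $\mu^{-1}$ there), so interchanging it with the limit $t\to\infty$ requires an argument you have not supplied. As written, the ``genuinely delicate step'' you flag is indeed the one that is not proved; I would recommend replacing the two-sided conjugation by the paper's one-sided expansion, after which Theorem \ref{theorem_2_2} follows from the half-wave operators with no extra input.
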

Note that, by \eqref{2_5} and \eqref{2_6}, $e^{itH_a}e^{-itH_0}$ (resp. $S_a(-t)S_0(t)$) is bounded on $\dot H^s$ (resp. $\dot H^s\times\dot H^{s-1}$) uniformly in $t\in \R$. Hence the above definitions of  wave operators make sense. 

%remark
\begin{remark}We give a few comments on this theorem.
\begin{itemize}
\item[(1)] For $s=0$, $\mathring W_0^\pm(H_a,H_0)$ and $\mathring W_0^\pm(H_0,H_a)$ are the standard wave and inverse wave operators defined on $L^2$, respectively, whose existence was proved by \cite{BKM}. Hence the novelty of this theorem lies in the case $s\neq0$. For the standard ones, the intertwining property
$$
\varphi(H_a)\mathring W^\pm_0(H_a,H_0)=\mathring W_0^\pm(H_a,H_0)\varphi(H_0)
$$
holds for any $\varphi\in L^2_{\loc}(\R)$ (see \cite{ReSi_III}). This property combined with \eqref{2_5} implies that  $\mathring W_0^\pm(H_a,H_0)$ and $\mathring W^\pm_0(H_0,H_a)$ extend to bounded operators on $\dot H^s$. We however stress that this does not mean in general the existence of $\mathring W_s^\pm(H_a,H_0)$ and $\mathring W^\pm_s(H_0,H_a)$. 
\item[(2)] Since $\sigma>1$ if $a>-(n-2)^2/4$, one can always take $s=1$ in Theorem \ref{theorem_2_2}. Hence we have the scattering to a free solution in the homogeneous energy space for the Schr\"odinger and wave equations associated with $H_a$ which, together with \ref{theorem_A}, leads Corollary \ref{corollary_1_1} . As mentioned above, under the radial symmetry, \eqref{theorem_2_2_2} with $s=1$ was proved by \cite{MSZ} using a different method. Theorem \ref{theorem_2_2} covers a wider range of $s$ without radial symmetry. 
%\item[(3)] When $a=-(n-2)^2/4$, this theorem does not hold in the sense that the inverse wave operators do not exist even for $s=0$. Indeed, if the inverse wave operator exists then, by the above intertwining property, it extends to a unitary from $\H^1_a$ to $$In fact, the asymptotic behavior of the (radial) solutions does not fit into the  framework of neither the short-range scattering nor the standard modified scattering (see Theorem \ref{theorem_2_7} below). 
\end{itemize}
\end{remark}

The following summarizes a few basic properties of the Schr\"odinger-type wave operators. \begin{corollary}\label{corollary_2_4}The Schr\"odinger-type wave operators satisfy the following properties: \begin{itemize}\item[(1)] $\mathring W_s^\pm(H_a,H_0)$ (resp. $\mathring W_s^\pm(H_0,H_a)$) are isometries from $\dot H^s$ to $\dot \H^s_a$ (resp. $\dot \H^s_a$ to $\dot H^s$);\item[(2)] $\mathring W_s^\pm(H_a,H_0)^*=\mathring W_s^\pm(H_0,H_a)$. Moreover, we have \begin{align*}
\mathring W_s^\pm(H_0,H_a)^*\mathring W_s^\pm(H_a,H_0)&=I_{\dot H^s},\\ \mathring W_s^\pm(H_a,H_0)\mathring W_s^\pm(H_a,H_0)^*&=I_{\dot \H^s_a};\end{align*}\item[(3)] For any $\varphi\in L^2_{\loc}(\R)$, the intertwining property holds:\begin{align*}\varphi(H_a)\mathring W_s^\pm(H_a,H_0)&=\mathring W_s^\pm(H_a,H_0)\varphi(H_0),\\\varphi(H_0)\mathring W_s^\pm(H_0,H_a)&=\mathring W_s^\pm(H_0,H_a)\varphi(H_a).\end{align*}\end{itemize}\end{corollary}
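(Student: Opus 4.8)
The plan is to reduce every assertion to the corresponding statement for the $L^2$ wave operators $W_0^\pm:=\mathring W_0^\pm(H_a,H_0)$ and $V_0^\pm:=\mathring W_0^\pm(H_0,H_a)$, whose existence on $L^2$ is known from \cite{BKM}. These are standard objects: since $e^{itH_a}e^{-itH_0}$ is unitary on $L^2$ for each $t$, $W_0^\pm$ is an $L^2$-isometry, and because $W_0^\pm$ intertwines the unitary groups it intertwines the entire Borel functional calculus, mapping $D(\varphi(H_0))$ into $D(\varphi(H_a))$ with $\varphi(H_a)W_0^\pm=W_0^\pm\varphi(H_0)$ even for \emph{unbounded} $\varphi$ (see \cite{ReSi_III}). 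The first step is to observe that on a suitable dense subspace $\D\subset\dot H^s\cap L^2$ contained in $D(|D|^s)$ (for instance $\D=C_0^\infty(\R^n)$, which is legitimate since $|s|<n/2$) the operator $\mathring W_s^\pm(H_a,H_0)$ agrees with $W_0^\pm$: for $f\in\D$ the net $e^{itH_a}e^{-itH_0}f$ converges to $\mathring W_s^\pm(H_a,H_0)f$ in $\dot H^s$ and to $W_0^\pm f$ in $L^2$, and both modes of convergence imply convergence in $\S'$, so the two limits coincide as tempered distributions. The analogous identification holds for the inverse operators.

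Granting this, the isometry claimed in (1) is immediate, and this is where I expect the real content to lie. For $f\in\D$, applying $\varphi(\lambda)=\lambda^{s/2}$ in the intertwining relation for $W_0^\pm$ gives $|D_a|^s\mathring W_s^\pm(H_a,H_0)f=|D_a|^sW_0^\pm f=W_0^\pm|D|^sf$, whence
\[
\norm{\mathring W_s^\pm(H_a,H_0)f}_{\dot\H^s_a}=\norm{W_0^\pm|D|^sf}_{L^2}=\norm{|D|^sf}_{L^2}=\norm{f}_{\dot H^s},
\]
using that $W_0^\pm$ is an $L^2$-isometry. Since $\D$ is dense in $\dot H^s$ and $\mathring W_s^\pm(H_a,H_0)$ is bounded by \eqref{2_5}, the isometry extends to all of $\dot H^s$; the same computation with $H_0$ and $H_a$ exchanged handles $\mathring W_s^\pm(H_0,H_a)$. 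The main obstacle is precisely to make this step rigorous: it is the \emph{unbounded} intertwining of the $L^2$ wave operator (together with the domain mapping $W_0^\pm D(|D|^s)\subset D(|D_a|^s)$) that encodes the short-range cancellation, so that no separate propagation or local-decay estimate has to be produced by hand.

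For (2), the composition identities follow from the chain rule for wave operators. Writing $g=\mathring W_s^\pm(H_a,H_0)f$ and inserting $g-e^{itH_a}e^{-itH_0}f$,
\[
e^{itH_0}e^{-itH_a}g=e^{itH_0}e^{-itH_a}\bigl(g-e^{itH_a}e^{-itH_0}f\bigr)+f,
\]
and the first term tends to $0$ in $\dot H^s$ because $e^{itH_0}e^{-itH_a}$ is uniformly bounded on $\dot H^s$ by \eqref{2_5}; letting $t\to\pm\infty$ yields $\mathring W_s^\pm(H_0,H_a)\mathring W_s^\pm(H_a,H_0)=I_{\dot H^s}$, and symmetrically $\mathring W_s^\pm(H_a,H_0)\mathring W_s^\pm(H_0,H_a)=I_{\dot\H^s_a}$. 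In particular each wave operator is a surjective isometry between the Hilbert spaces $\dot H^s$ and $\dot\H^s_a$, hence unitary, so its Hilbert-space adjoint coincides with its inverse; combined with the two composition identities this gives $\mathring W_s^\pm(H_a,H_0)^*=\mathring W_s^\pm(H_0,H_a)$.

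Finally, for (3) I would argue directly from the definition. For fixed $\tau\in\R$, boundedness of $e^{-i\tau H_a}$ on $\dot\H^s_a$ lets one pull it through the strong limit, and after the change of variables $t\mapsto t+\tau$,
\[
e^{-i\tau H_a}\mathring W_s^\pm(H_a,H_0)=\slim_{t\to\pm\infty}e^{i(t-\tau)H_a}e^{-i(t-\tau)H_0}e^{-i\tau H_0}=\mathring W_s^\pm(H_a,H_0)e^{-i\tau H_0}.
\]
This is the intertwining relation for the one-parameter groups; passing to the generated functional calculus in the usual way (first for bounded Borel $\varphi$ via Stone's theorem and approximation, then for general $\varphi\in L^2_{\loc}(\R)$ on the natural domains) gives $\varphi(H_a)\mathring W_s^\pm(H_a,H_0)=\mathring W_s^\pm(H_a,H_0)\varphi(H_0)$, and the same computation yields the companion identity for $\mathring W_s^\pm(H_0,H_a)$.
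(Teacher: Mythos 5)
Your proposal is correct, and its backbone --- identify $\mathring W_s^\pm$ with the $L^2$ wave operators $W_0^\pm$ on a dense subspace and then import the standard $L^2$ theory --- is the same as the paper's. The execution differs in a few places. For the identification, the paper first constructs the \emph{inhomogeneous} wave operators $W^\pm_{|s|}$ on $H^{|s|}$ (using the inhomogeneous analogue of Lemma \ref{lemma_2_1}) and then invokes uniqueness of strong limits on the common dense domain $H^{|s|}$; you instead observe that $\dot H^s$- and $L^2$-convergence of $e^{itH_a}e^{-itH_0}f$ for $f\in C_0^\infty$ both force the same limit in $\S'$ (legitimate since $|s|<n/2$), which is lighter and avoids the detour through $H^s$. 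For the properties themselves, the paper transfers (2) and (3) wholesale from the known statements for $W_0^\pm$ and $W_0^\pm(H_0,H_a)$, whereas you reprove the chain rule and the intertwining directly at the level of $\dot H^s$ via the uniform bound \eqref{2_5} and time translation; both routes work, and yours is more self-contained. Finally, your treatment of (1) is more informative than the paper's one-line claim that it is ``immediate from the definition'': as you correctly point out, $\norm{e^{itH_a}e^{-itH_0}f}_{\dot\H^s_a}=\norm{e^{-itH_0}f}_{\dot\H^s_a}$ is not constant in $t$, so exact isometry is not a formal consequence of the unitarity of the two groups on their respective spaces; it genuinely requires the unbounded intertwining $|D_a|^sW_0^\pm=W_0^\pm|D|^s$ (with the accompanying domain inclusion) together with the $L^2$-isometry of $W_0^\pm$, which is exactly what you supply.
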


Note that similar properties as items (1) and (2) also hold for the wave-type wave operators with $\dot H^s$ and $\dot\H_a^s$ replaced by $\dot H^s\times \dot H^{s-1}$ and $\dot\H_a^s\times \dot \H_a^{s-1}$, respectively. 

The proof of Theorem \ref{theorem_2_2} essentially relies on Lemma \ref{lemma_2_1} and the following Kato-smoothness result proved by \cite{KaYa} for the free case $a=0$ and by \cite{BKM}  for general cases (see also \cite{BPST2}). 
%{lemma}
\begin{lemma}
\label{lemma_2_5}
Let $a>-(n-2)^2/4$. Then $|x|^{-1}$ is $H_a$-smooth in the sense that
$$
\norm{|x|^{-1}e^{-itH_a}u_0}_{L^2(\R^{1+n})}\le C\norm{u_0}_{L^2(\R^n)}.
$$
\end{lemma}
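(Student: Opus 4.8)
The plan is to prove the resolvent (limiting absorption) reformulation of Kato smoothness. Recall Kato's criterion: for a self-adjoint operator $H$ and a closed operator $A$, the spacetime bound $\norm{Ae^{-itH}u_0}_{L^2(\R^{1+n})}\le C\norm{u_0}_{L^2}$ holds if and only if
$$
\sup_{z\in\C\setminus\R}\bignorm{A(H-z)^{-1}A^*}_{\mathbb B(L^2)}<\infty.
$$
Applying this with $H=H_a$ and the self-adjoint multiplier $A=A^*=|x|^{-1}$, the lemma reduces to the uniform weighted resolvent estimate
$$
M:=\sup_{z\in\C\setminus\R}\bignorm{|x|^{-1}(H_a-z)^{-1}|x|^{-1}}_{\mathbb B(L^2)}<\infty.
$$
For each fixed $z\notin\R$ this quantity is finite: the resolvent maps $L^2$ into the form domain $D(|D_a|)=H^1$ (here subcriticality $a>-(n-2)^2/4$ enters through \eqref{2_3}), and Hardy's inequality \eqref{Hardy} makes $|x|^{-1}\colon H^1\to L^2$ bounded. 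The entire content is therefore the uniformity in $z$, which amounts to a limiting absorption principle near the spectrum $\sigma(H_a)=[0,\infty)$.

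First I would exploit the exact scaling invariance of $H_a$ to collapse the problem to a single energy. Writing $(U_\tau f)(x)=e^{n\tau/2}f(e^\tau x)$ for the dilation group, one has $U_\tau H_aU_\tau^*=e^{-2\tau}H_a$ and $U_\tau|x|^{-1}U_\tau^*=e^{-\tau}|x|^{-1}$, whence a direct computation gives
$$
U_\tau\,|x|^{-1}(H_a-z)^{-1}|x|^{-1}\,U_\tau^*=|x|^{-1}(H_a-e^{2\tau}z)^{-1}|x|^{-1},\qquad \tau\in\R .
$$
Consequently $z\mapsto \norm{|x|^{-1}(H_a-z)^{-1}|x|^{-1}}$ is invariant under the dilations $z\mapsto e^{2\tau}z$, hence constant along every ray from the origin, and it is invariant under $z\mapsto\bar z$ by taking adjoints. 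It therefore suffices to bound it on the half-circle $\{e^{i\theta}:0<\theta\le\pi\}$. Since the resolvent is norm-continuous off the spectrum and the only point of $\sigma(H_a)$ on this arc is $z=1$, the finiteness of $M$ is equivalent to the uniform boundedness of $\norm{|x|^{-1}(H_a-(1+i\ep))^{-1}|x|^{-1}}$ as $\ep\downarrow0$, i.e. to the limiting absorption principle for $H_a$ at the single energy $\lambda=1$.

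It then remains to establish this LAP at $\lambda=1$, and this is the main obstacle. I would carry it out by either of two standard routes. The soft route is Mourre theory with the dilation generator $A=\tfrac12(x\cdot D+D\cdot x)$, $D=-i\nabla$: the exact homogeneity yields the global virial identity $i[H_a,A]=2H_a$, so on a spectral window $J$ around $\lambda=1$ one has the strictly positive commutator estimate $E_{H_a}(J)\,i[H_a,A]\,E_{H_a}(J)\ge 2(1-\delta)E_{H_a}(J)$ with no compact remainder, which by Mourre's theorem gives the LAP in the weights $\langle A\rangle^{-s}$, $s>1/2$, after which one transfers these to the homogeneous weight $|x|^{-1}$. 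The hands-on route is to separate variables, $L^2(\R^n)\cong\bigoplus_{\ell\ge0}\H_\ell\otimes L^2(r^{n-1}dr)$, on which $H_a$ reduces to the half-line Bessel operator of index $\nu_\ell=\sqrt{(\ell+\tfrac{n-2}{2})^2+a}$, write the radial resolvent kernel at energy $1$ explicitly through the Bessel functions $r^{-(n-2)/2}J_{\nu_\ell}(r)$, and bound the weighted Schur/Hilbert--Schmidt norm of the kernel uniformly in $\ell$ using standard uniform estimates on $J_{\nu_\ell}$. In both approaches the role of subcriticality is decisive: $a>-(n-2)^2/4$ forces the lowest index $\nu_0=\sqrt{(n-2)^2/4+a}>0$, which rules out a zero-energy resonance and is exactly what makes the bounds uniform down to the bottom of the spectrum (the breakdown of this is precisely what separates the critical case $a=-(n-2)^2/4$). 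Assembling the uniform $\ell$-bounds, or transferring the Mourre weights, yields $M<\infty$ and hence the claimed smoothing estimate.
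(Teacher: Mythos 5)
The paper does not actually prove Lemma \ref{lemma_2_5}: it is quoted as a known result, due to \cite{KaYa} for $a=0$ and to \cite{BKM} (see also \cite{BPST2}) for general subcritical $a$, and is then used as a black box in the proof of Theorem \ref{theorem_2_2}. So you are supplying a proof where the paper supplies a citation. Your architecture is sound and, in its second branch, essentially reconstructs what the cited references do: Kato's resolvent characterization of smoothness reduces the lemma to $\sup_{z\notin\R}\|\,|x|^{-1}(H_a-z)^{-1}|x|^{-1}\|<\infty$; your observation that the exact homogeneity of both $H_a$ and the weight collapses this to a limiting absorption bound at the single energy $\lambda=1$ is correct and is a genuinely clean way to organize the argument; and the separation into spherical harmonics, with $H_a$ acting as a Bessel operator of order $\nu_\ell=\sqrt{(\ell+\frac{n-2}{2})^2+a}$ on each sector and $\nu_0>0$ precisely because $a>-(n-2)^2/4$, is exactly the mechanism in \cite{BPST1,BPST2}. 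The honest caveat is that all of the analytic content is deferred to ``standard uniform estimates on $J_{\nu_\ell}$'': the bound must be uniform in the order $\nu_\ell$, including the transition region $r\sim\nu_\ell$, and establishing those uniform kernel bounds is the bulk of the cited papers. That is acceptable for a sketch, but it is not a shortcut.

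The one step I would push back on is the Mourre branch. The commutator identity $i[H_a,A]=2H_a$ and the resulting strict Mourre estimate on a window around $\lambda=1$ are fine, but Mourre's theorem delivers the limiting absorption principle in the weights $\<A\>^{-s}$, $s>1/2$ (equivalently, after standard work, $\<x\>^{-s}$), and the passage from these to the homogeneous weight $|x|^{-1}$ is not a routine ``transfer.'' At infinity $|x|^{-1}\lesssim\<x\>^{-s}$ for $s\le1$, so that region is fine, but near the origin $|x|^{-1}$ is unbounded while $\<x\>^{-s}$ is not, and you cannot absorb the local singularity into the fixed-$z$ mapping property $(H_a-z)^{-1}:L^2\to H^1$, since that bound degenerates as $\Im z\to0$. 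Controlling $|x|^{-1}\chi_{\{|x|<1\}}(H_a-(1+i\ep))^{-1}$ uniformly in $\ep$ requires an additional ingredient (e.g.\ a uniform $H^{-1}\to H^1$ local resolvent bound or a uniform Sobolev/trace estimate), so as written this branch has a genuine gap; the Bessel branch is the one that closes.
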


%proof
\begin{proof}[Proof of Theorem \ref{theorem_2_2}]
The proof is decomposed into several steps. 

{\it Step 1}. We first prove the existence of the standard wave and inverse wave operators $\mathring W^\pm_0(H_a,H_0)$, $\mathring W^\pm_0(H_0,H_a)$. We consider the existence of $\mathring W^+_0(H_0,H_a)$ only, otherwise the proofs being similar. Let $W(t)=e^{itH_0}e^{-itH_a}$, $u\in D(H_a)$ and $v\in D(H_0)$. Note that $D(H_a)$ and $D(H_0)$ are dense in $L^2$. Differentiating $\<W(t)u,v\>$ in $t$ and integrating over the interval $(t',t)$ imply 
$$
\<(W(t)-W(t'))u,v\>=-ia\int_{t'}^t\<|x|^{-1}e^{-i\tau H}u,|x|^{-1}e^{-i\tau H_0}v\>d\tau
$$
where $\<f,g\>=\int_{\R^n}f\overline gdx$. It follows from this formula and Lemma \ref{lemma_2_5} with $a=0$ that 
$$
|\<(W(t)-W(t'))u,v\>|\le C\norm{|x|^{-1}e^{-i\tau H}u}_{L^2((t',t)\times \R^n)}\norm{v}_{L^2}
$$
Taking the supremum over $v\in D(H_0)$ with the condition $\norm{v}_{L^2}=1$, we obtain
$$
\norm{(W(t)-W(t'))u}_{L^2}\le C\norm{|x|^{-1}e^{-i\tau H}u}_{L^2((t',t)\times \R^n)}.
$$
Since $\norm{|x|^{-1}e^{-i\tau H}u}_{L^2(\R^n)}\in L^2(\R_t)$ by Lemma \ref{lemma_2_5}, the right hand side converges to zero as $t,t'\to\infty$. Hence $W(t)u$ converges strongly in $L^2$ as $t\to\infty$.  Since $\norm{W(t)}_{\mathbb B(L^2)}=1$ and $D(H)$ is dense in $L^2$, $\mathring W^+_0(H_0,H_a)$ exists by the density argument. Moreover, since $e^{itH_0}$ is unitary on $L^2$, for any $u_0\in L^2$, setting $u_+:=\mathring W^+_0(H_0,H_a)u_0$ we have
\begin{align*}
\norm{e^{-itH_a}u_0-e^{-itH_0}u_+}_{L^2}=\norm{e^{itH_0}e^{-itH_a}u_0-u_+}_{L^2}\to 0,\quad t\to\infty.
\end{align*}

{\it Step 2}. We next prove the existence of $\mathring W^\pm_s(H_a,H_0)$ and $\mathring W^\pm_s(H_0,H_a)$ for $s\neq0$. As above, we consider the existence of $\mathring W^+_s(H_0,H_a)$ only. By \eqref{2_5}, $W(t)$ is bounded on $\dot H^s$ uniformly in $t$ for any $|s|<\min\{n/2,2,\sigma\}$. It thus is sufficient to show that, as $t\to\infty$, $W(t)u$ converges strongly in $\dot H^s$ for any $u$  belonging a dense subset of $\dot H^s$, say $u\in C_0^\infty(\R^n)$. We fix two exponents $0<s<s'<\min\{n/2,2,\sigma\}$. H\"older's inequality and Plancherel's theorem then imply
\begin{align*}
\norm{(W(t)-W(t'))u}_{\dot H^s}
&\le \norm{(W(t)-W(t'))u}_{L^2}^{\frac{s'-s}{s'}}\norm{(W(t)-W(t'))u}_{\dot H^{s'}}^{\frac{s}{s'}}\\
&\le C\norm{(W(t)-W(t'))u}_{L^2}^{\frac{s'-s}{s'}}\norm{u}_{\dot H^{s'}}^{\frac{s}{s'}}
\end{align*}
uniformly in $t,t'\in \R$. Similarly, we have
$$
\norm{(W(t)-W(t'))u}_{\dot H^{-s}}\le C\norm{(W(t)-W(t'))u}_{L^2}^{\frac{s'-s}{s'}}\norm{u}_{\dot H^{-s'}}^{\frac{s}{s'}}. 
$$
Since $\norm{(W(t)-W(t'))u}_{L^2}$ converges to zero as $t,t'\to\infty$ by the above Step 1, $W(t)u$ converges strongly in $\dot H^s$ as $t\to\infty$. $\mathring W^+_s(H_0,H_a)$ thus exists for any $|s|<\min\{n/2,2,\sigma\}$. Since $e^{-itH_0}$ is a unitary on $\dot H^s$, we have \eqref{theorem_2_2_1} for $|s|<\min\{n/2,2,\sigma\}$ by the same argument as in case of $s=0$. This completes the proof of the first half of the statement.

{\it Step 3}. We next show that the following wave operators for the half-wave equations, 
\begin{align*}
\mathring W_s^\pm (|D_a|,|D|)&:=\slim_{t\to\pm\infty}e^{it|D_a|}e^{-it|D|},\\
\mathring W_s^\pm (|D|,|D_a|)&:=\slim_{t\to\pm\infty}e^{it|D|}e^{-it|D_a|},
\end{align*}
defined on $\dot H^s$ exist. By the same argument as in Step 2, it suffices to prove the case $s=0$. To this end, we employ a version of the invariance principle of wave operators by \cite[Section 4]{KakoYajima} (see also the original works by \cite{Birman} and \cite{Kato_PJM}). This principle particularly implies that, given two non-negative self-adjoint operators $A$ and $B$, if $A-B$ is decomposed as $A-B=V_1^*V_2$ (in the form sense) such that $V_1$ is $A$-smooth and $V_2$ is $B$-smooth, then the wave operators $\mathring W_0^\pm(\sqrt A,\sqrt B)$ and $\mathring W_0^\pm(\sqrt B,\sqrt A)$ exist. We refer to Appendix \ref{appendix_A} below for more details.  It follows from this fact, the above Step 1 and Lemma \ref{lemma_2_5} that $\mathring W_0^\pm (|D_a|,|D|)$ and $\mathring W_0^\pm (|D|,|D_a|)$ exist. 

{\it Step 4.} We finally prove the existence of $\mathring \Omega_s^\pm(S_a,S_0)$ and $\mathring \Omega_s^\pm(S_0,S_a)$. We again consider the existence of $\mathring \Omega_s^+(S_0,S_a)$ only. Let $\vec v_0=(v_0,v_1)\in \dot H^s\times \dot H^{s-1}$ and $\vec v(t)=(v(t),\partial_t v(t))=S_a(t)\vec v_0$. Then $\vec v(t)\in \dot H^{s}\times \dot H^{s-1}$ by \eqref{2_6}. Moreover, one has
\begin{align*}
v(t)
&=\cos(t|D_a|)v_0+|D_a|^{-1}\sin(t|D_a)v_1\\
&=\frac12e^{-it|D_a|}(v_0+i|D_a|^{-1}v_1)+\frac{1}{2}e^{it|D_a|}(v_0-i|D_a|^{-1}v_1) .
\end{align*}
Since $v_0\pm i|D_a|^{-1}v_1$ belong to $\dot H^s$ by Lemma \ref{lemma_2_1}, the above Step 2 implies
\begin{align}
\nonumber 
v(t)
&=\frac12e^{-it|D|}\mathring W_s^+(v_0+i|D_a|^{-1}v_1)+\frac{1}{2}e^{it|D|}\mathring W_s^-(v_0-i|D_a|^{-1}v_1)+o(1)\\
\label{theorem_2_2_proof_1}
&=\cos(t|D|)v_{+,0}+|D|^{-1}\sin(t|D|)v_{+,1}+o(1)
\end{align}
in $\dot H^s$ as $t\to \infty$, where $\mathring W^\pm_s=\mathring W_s^\pm (|D|,|D_a|)$ and
\begin{align*}
v_{+,0}&=\frac12\Big[(\mathring W_s^++\mathring W_s^-)v_0+i(\mathring W_s^+-\mathring W_s^-)|D_a|^{-1}v_1\Big],\\
v_{+,1}&=\frac12\Big[-i|D|(\mathring W_s^+-\mathring W_s^-)v_0+|D|(\mathring W_s^+-\mathring W_s^-)|D_a|^{-1}v_1\Big].
\end{align*}
Note that $(v_{+,0},v_{+,1})$ belongs to $\dot H^s\times \dot H^{s-1}$ since $\mathring W_s^\pm:\dot H^s\to \dot H^s$ by Step 3. Similarly, we have
\begin{align}
\label{theorem_2_2_proof_2}
\partial_tv(t)=-|D|\sin(t|D|)v_{+,0}+\cos(t|D|)v_{+,1}+o(1)
\end{align}
in $\dot H^{s-1}$ as $t\to \infty$. \eqref{theorem_2_2_proof_1} and \eqref{theorem_2_2_proof_2}  imply \eqref{theorem_2_2_2} with $\vec v_+=(v_{+,0},v_{+,1})$. Moreover, $\mathring \Omega_s^+(S_0,S_a)$ can be defined as the map $\vec v_0\mapsto \vec v_+$. This completes the proof of the second half of the statement. 
\end{proof}

\begin{proof}[Proof of Corollary \ref{corollary_2_4}] The first statement (1) is an immediate consequence of the definition of $\mathring W_s^\pm(H_a,H_0)$ and $\mathring W_s^\pm(H_0,H_a)$. To show the items (2) and (3) in Corollary \ref{corollary_2_4}, we observe that Lemma \ref{lemma_2_1} also implies the equivalence the inhomogeneous Sobolev norms,  namely, $$C_1\norm{f}_{H^s}\le \norm{\<H_a\>^{s/2}f}_{L^2}\le C_2\norm{f}_{H^s} $$ for $|s|<\min\{n/2,\ 2,\ \sigma\}$. Then the same proof as above yields the existence of the following wave and inverse wave operators defined on $H^s$: \begin{align*}W_s^\pm(H_a,H_0)&:=\slim_{t\to\pm\infty}e^{itH_a}e^{-itH_0},\\ W_s^\pm(H_0,H_a)&:=\slim_{t\to\pm\infty}e^{itH_0}e^{-itH_a},\end{align*}where $\slim_{t\to\pm\infty}$ denotes the strong limit in $H^s$. In particular, for any $f\in H^{|s|}$, all of $W_{|s|}^\pm(H_a,H_0)f$, $\mathring W_s^\pm(H_a,H_0)f$ and $W_0^\pm(H_a,H_0)f$ coincide with each other due to the uniqueness of the strong limit. Since $H^{|s|}$ is dense in $L^2$ and in $\dot H^s$, the statements (2) and (3) follow from that of $W_0^\pm(H_a,H_0)$ and $W_0^\pm(H_0,H_a)$ which are well-known (see \cite{ReSi_III}). \end{proof}

%remark
\begin{remark}In the above proof of the existence of wave operators in $\dot H^s$, the norm equivalence $\norm{|D_a|^{s'}f}_{L^2}\sim \norm{f}_{\dot H^{s'}}$ for some $s'>s>0$ has played an essential role. As we mentioned in the introduction, we will give in the next section an alternative approach to ensure existence of wave operators in $\dot H^s$ which requires the norm equivalence only for the same $s$ and a few additional assumptions. Note that it is not known whether only the norm equivalence for the same $s$ (without assuming any additional condition) is sufficient or not, to deduce the existence of the wave operator in $\dot H^s$ from the existence of the standard one in $L^2$. \end{remark}

%subsection
\subsection{Critical case}
Next we consider the critical case $
a=(n-2)^2/4.
$ 
To state the result, we introduce a few notation. Let $P_nf$ denote the spherical mean of $f$ given by
$$
P_nf(r):=\omega_{n-1}^{-1}\int_{\mathbb S^{n-1}}f(r\theta)d\sigma,\quad r>0,
$$
and let $P^\perp_n=I-P_n$, where $\omega_{n-1}=|\mathbb S^{n-1}|$ is the volume of the unit sphere $\mathbb S^{n-1}$. Note that, since the potential $a|x|^{-2}$ is radially symmetric, $P_n$ and $P_n^\perp$ commute with $H_a$ and hence with $\varphi(H_a)$ for any $\varphi\in L^2_{\loc}(\R)$. In particular, both of $P_n$ and $P_n^\perp$ extend to partial isometries on $\dot H^s$ and on $\dot \H^s$. Let $U:L^2_{\mathrm{rad}}(\R^n)\to L^2_{\mathrm{rad}}(\R^2)$ be a unitary, given by
$$
Uf(r)=c_n r^{\frac{n-2}{2}}f,\quad U^*f= c_n^{-1} r^{-\frac{n-2}{2}}f,
$$
where $c_n=\sqrt{\omega_{n-1}/w_1}$ and $L^2_{\mathrm{rad}}(\R^n)=\{f\in L^2(\R^n)\ |\ \text{$f$ is radially symmetric}\}$. 

The main result for the critical case is as follows. 

%theorem
\begin{theorem}	
\label{theorem_2_7}
Let $a=-(n-2)^2/4$ and $-1<s\le1$. The the following statements are satisfied: 
\begin{itemize}
\item The Schr\"odinger-type wave and inverse wave operators defined on $P_n^\perp\dot H^s$ exist, namely the following strong limits on $\dot H^s$ exist: 
\begin{align*}
\mathring W_s^\pm(H_a,H_0;P^\perp_n)&:=\slim_{t\to\pm\infty}e^{itH_a}e^{-itH_0}P^\perp_n,\\
\mathring W_s^\pm(H_0,H_a;P^\perp_n)&:=\slim_{t\to\pm\infty}e^{itH_0}e^{-itH_a}P^\perp_n.
\end{align*}
Moreover, for any $u_0\in \dot H^s$, there exist $u_\pm\in P^\perp_n\dot H^s$ such that
\begin{align}
\label{theorem_2_7_1}
\lim_{t\to\pm\infty}\norm{e^{-itH_a}u_0-U^*e^{it\Delta_{\R^2}}UP_nu_0-e^{-itH_0}u_\pm}_{\dot H^s}=0.
\end{align}
\item Assume in addition that $0< s\le 1$. Then the wave-type wave and inverse wave operators defined on $P^\perp_n\dot H^s\times P_n^\perp\dot H^{s-1}$ also exist: 
\begin{align*}
\mathring \Omega_s^\pm(S_a,S_0;P^\perp_n)&:=\slim_{t\to\pm\infty}S_a(-t)S_0(t)P_n^\perp,\\ 
\mathring \Omega_s^\pm(S_0,S_a;P^\perp_n)&:=\slim_{t\to\pm\infty}S_0(-t)S_a(t)P_n^\perp,
\end{align*}
Moreover, for any $\vec v_0\in \dot H^s\times \dot H^{s-1}$, there exist $\vec v_\pm\in  P^\perp_n\dot H^s\times P_n^\perp\dot H^{s-1}$ such that
\begin{align}
\label{theorem_2_7_2}
\lim_{t\to\pm\infty}\norm{S_a(t)\vec v_0-U^*S_{0,\R^2}(t)UP_n\vec v_0-S_{0,\R^n}(t)\vec v_\pm}_{\dot H^s\times \dot H^{s-1}}=0,
\end{align}
where $S_{0,\R^n}(t)$ denotes the free wave evolution group in $\R^n$ and we have used an abuse of notation that $A\vec f=(Af_0,Af_1)$ for $A=U,U^*,P_n$ and $\vec f=(f_0,f_1)$. 
\end{itemize}
\end{theorem}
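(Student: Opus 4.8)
The plan is to exploit an exact separation of the flow into a radial and a non-radial part. Since $a|x|^{-2}$ is radially symmetric, $P_n$ and $P_n^\perp$ commute with $H_a$, with $H_0$, and hence with every function of these operators and with all the evolution groups; it therefore suffices to treat $P_nu_0$ and $P_n^\perp u_0$ separately. The radial component will be captured \emph{exactly} by the rescaled two-dimensional flow, while the non-radial component is effectively subcritical and can be handled by rerunning the argument of Theorem~\ref{theorem_2_2}.

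For the radial part, the starting point is the identity $UH_aU^*=-\Delta_{\R^2}$ on radial functions, valid precisely in the critical case $a=-(n-2)^2/4$. This follows from a direct computation: writing $f=c_n^{-1}r^{-(n-2)/2}g$ and inserting it into $H_af=-f''-\tfrac{n-1}{r}f'+a r^{-2}f$, the zeroth-order term acquires the coefficient $\tfrac{(n-2)^2}{4}+a$, which vanishes exactly when $a=-(n-2)^2/4$, leaving $-g''-\tfrac1r g'=-\Delta_{\R^2}g$. Consequently $U|D_a|U^*=|D|_{\R^2}$ on radial functions, and by the spectral theorem
\begin{align*}
e^{-itH_a}P_n=U^*e^{it\Delta_{\R^2}}UP_n,\qquad S_a(t)P_n=U^*S_{0,\R^2}(t)UP_n,
\end{align*}
as exact identities (the latter understood entrywise in the matrix form \eqref{S_a(t)}). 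In particular the subtracted terms in \eqref{theorem_2_7_1} and \eqref{theorem_2_7_2} are nothing but the radial parts of the flows, so that
\begin{align*}
e^{-itH_a}u_0-U^*e^{it\Delta_{\R^2}}UP_nu_0=e^{-itH_a}P_n^\perp u_0,
\end{align*}
and likewise for $S_a(t)$. This reduces both assertions to the scattering of the non-radial parts $e^{-itH_a}P_n^\perp u_0$ and $S_a(t)P_n^\perp\vec v_0$ to free solutions in $\dot H^s$ and $\dot H^s\times\dot H^{s-1}$, respectively.

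On the range of $P_n^\perp$ the problem is genuinely subcritical. Decomposing into spherical harmonics, $H_a$ acts on the $\ell$-th sector as a radial operator with effective coupling $a_\ell=a+\ell(\ell+n-2)$; since $\ell\ge1$ there and $a=-(n-2)^2/4$, we have $a_\ell>-(n-2)^2/4$ and the associated threshold $\sigma_\ell=1+\sqrt{\ell(\ell+n-2)}$ satisfies $\sigma_\ell\ge 1+\sqrt{n-1}>1$. I would first upgrade Lemma~\ref{lemma_2_1} to the non-radial part, obtaining the norm equivalence $\norm{f}_{\dot H^s}\sim\norm{|D_a|^sf}$ on $P_n^\perp L^2$ for the full range $-1<s\le1$ (the infimum of the sectorial thresholds $\min\{n/2,2,\sigma_\ell\}$ over $\ell\ge1$ equals $\min\{n/2,2\}\ge 3/2$, so the endpoint $s=1$ becomes admissible), together with the $H_a$-smoothness of $|x|^{-1}$ restricted to $P_n^\perp$, i.e. the analogue of Lemma~\ref{lemma_2_5} with $e^{-itH_a}$ replaced by $e^{-itH_a}P_n^\perp$. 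Granting these two facts, the four steps in the proof of Theorem~\ref{theorem_2_2} apply on $P_n^\perp L^2$: Step~1 yields the $L^2$ wave and inverse wave operators there, Step~2 interpolates them to $P_n^\perp\dot H^s$ for $-1<s\le1$ (choosing a fixed auxiliary exponent $s'\in(1,\min\{n/2,2\})$, so that $s'>|s|$ for all $|s|\le1$), Step~3 produces the non-radial half-wave operators via the invariance principle, and Step~4 assembles $\mathring\Omega_s^\pm(S_0,S_a;P_n^\perp)$ from them; here the wave case requires $0<s\le1$ in order to accommodate the companion exponent $s-1>-1$. Combining with the exact radial identity then gives \eqref{theorem_2_7_1} and \eqref{theorem_2_7_2}, with $u_\pm$ and $\vec v_\pm$ defined as the images of $u_0$ and $\vec v_0$ under the corresponding non-radial inverse wave operators, which land in $P_n^\perp\dot H^s$ and $P_n^\perp\dot H^s\times P_n^\perp\dot H^{s-1}$.

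The main obstacle is the non-radial endpoint analysis: establishing the norm equivalence and the Kato-smoothness of $|x|^{-1}$ on $P_n^\perp L^2$ up to $s=1$, where the corresponding global statements (Lemmas~\ref{lemma_2_1} and~\ref{lemma_2_5}) fail because $\sigma=1$ in the critical case. The delicate point is uniformity over the spherical-harmonic sectors: one must verify that the constants produced sector-by-sector can be taken independent of $\ell$, so that the estimates close on all of $P_n^\perp L^2$ rather than only on each sector. This improved-Hardy mechanism on $P_n^\perp$ is precisely what distinguishes the critical case from the subcritical Theorem~\ref{theorem_2_2}, and it is also what forces the range $-1<s\le1$, the ceiling $s=1$ reflecting the two-dimensional nature of the radial reduction, where $n/2=1$.
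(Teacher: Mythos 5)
Your overall architecture --- splitting $u_0$ into $P_nu_0$ and $P_n^\perp u_0$, identifying $e^{-itH_a}P_n$ with $U^*e^{it\Delta_{\R^2}}UP_n$ exactly via the unitary conjugation that kills the zeroth-order coefficient precisely when $a=-(n-2)^2/4$, and then treating the non-radial part as an effectively subcritical scattering problem using the $H_a$-smoothness of $|x|^{-1}P_n^\perp$ (Lemma \ref{lemma_2_9}) and the improved Hardy inequality on $P_n^\perp$ --- is exactly the paper's, and it does deliver the result for $|s|<1$ (and the corresponding open range for the wave part).

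The gap is at the endpoint $s=1$, which is the case needed for the energy-space applications. Your Step 2 runs the interpolation argument of Theorem \ref{theorem_2_2} with an auxiliary exponent $s'\in(1,\min\{n/2,2\})$, and therefore needs the norm equivalence $\norm{f}_{\dot H^{s'}}\sim\norm{|D_a|^{s'}f}$ on $P_n^\perp L^2$ for some $s'>1$. You propose to obtain this by decomposing into spherical harmonics and invoking the subcritical thresholds $\sigma_\ell=1+\sqrt{\ell(\ell+n-2)}$ sector by sector, and you yourself flag the $\ell$-uniformity of the constants as ``the main obstacle'' --- but you never resolve it. This is not a removable technicality: Lemma \ref{lemma_2_1} is a statement about the full operator $H_a$ on $L^2(\R^n)$, its proof in the cited reference goes through multiplier/heat-kernel arguments for that operator, and there is no off-the-shelf sectorial version with constants uniform in $\ell$. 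Indeed the paper states explicitly that it is not known whether the equivalence \eqref{lemma_2_8_2} extends to any $s>1$, and for precisely this reason it abandons the interpolation route at $s=1$ and instead invokes the abstract criterion of Theorem \ref{theorem_3_4}: conditions (H1)--(H5) require the norm equivalence only up to $s=1$, supplemented by the compactness condition (H2), which is verified by factoring the resolvent difference through $|x|^{-1}(H_0+1)^{-1}P_n^\perp\in\mathbb B_\infty(L^2)$ and using (H4) (weak convergence to zero under the absolutely continuous flow). As written, your argument proves the theorem only for $|s|<1$, respectively $0<s<1$ for the wave-type operators; the endpoint rests on an unproved (and, per the paper, open) claim. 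To repair it you would either have to supply the uniform sectorial norm equivalence beyond $s=1$, or replace your endpoint step by the compactness mechanism of Section \ref{section_3}.
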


To prove this theorem, we prepare a couple of lemmas. The following lemma concerns with a few basic properties of $H_a$ in the critical case. 

%{lemma}
\begin{lemma}
\label{lemma_2_8}
Let $a=-(n-2)^2/4$. Then the following statements are satisfied: 
\begin{itemize}
\item %$U$ is an isometry from $\dot\H^1_{\mathrm{rad}}(\R^n)$ to $\dot H^1_{\mathrm{rad}}(\R^2)$. In particular, the form domain $\H^1_{\mathrm{rad}}(\R^n)$ of $P_nH_a$ coincides with $U^*H^1_{\mathrm{rad}}(\R^2)$. Moreover, 
For any radially symmetric $u\in C_0^\infty(\R^n\setminus\{0\})$, one has
\begin{align}
\label{lemma_2_8_1}
H_au=-U^*\Delta_{\R^2}Uu.
\end{align}
\item For $|s|\le1$, we have the following norm equivalence 
\begin{align}
\label{lemma_2_8_2}
C_1\norm{u}_{\dot H^s}\le\norm{|D_a|^{s/2}u}_{L^2}\le C_2\norm{u}_{\dot H^s},\quad u\in P_n^\perp \dot H^s. 
\end{align}
\end{itemize}
\end{lemma}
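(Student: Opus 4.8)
The first identity \eqref{lemma_2_8_1} is a direct computation in radial coordinates, and I would treat it first. For a radial $u$ one has $\Delta_{\R^n}u=\partial_r^2u+\tfrac{n-1}{r}\partial_ru$, so that $H_au=-\partial_r^2u-\tfrac{n-1}{r}\partial_ru+ar^{-2}u$. Setting $g=Uu=c_nr^{(n-2)/2}u$ and using $\Delta_{\R^2}g=\partial_r^2g+\tfrac1r\partial_rg$, I would expand the derivatives of $r^{(n-2)/2}u$ and sort the outcome by the power of $r$ and the order of the derivative falling on $u$. The zeroth-order contribution carries the factor $(\tfrac{n-2}{2})^2=-a$ in the critical case, so it reproduces the potential term, while the remaining pieces reassemble into $c_nr^{(n-2)/2}(\partial_r^2u+\tfrac{n-1}{r}\partial_ru)$. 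Applying $U^*=c_n^{-1}r^{-(n-2)/2}$ then gives $U^*\Delta_{\R^2}Uu=\partial_r^2u+\tfrac{n-1}{r}\partial_ru-ar^{-2}u=-H_au$, which is \eqref{lemma_2_8_1}. This step is routine; the only substantive point is the exact cancellation $(\tfrac{n-2}{2})^2=-a$, which is special to the critical coupling.

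For the norm equivalence \eqref{lemma_2_8_2} the plan is to establish the endpoint case first at the level of quadratic forms on $P_n^\perp$ and then propagate to all $|s|\le1$. Decomposing $u\in P_n^\perp\dot H^1$ into spherical harmonics, the form $Q_a$ acts on the $\ell$-th sector as a radial operator with effective coefficient $a+\ell(\ell+n-2)$, and on $P_n^\perp$ only the harmonics with $\ell\ge1$ occur. Feeding the one-dimensional radial Hardy inequality $\int_0^\infty|\partial_ru_\ell|^2r^{n-1}\,dr\ge\tfrac{(n-2)^2}{4}\int_0^\infty r^{-2}|u_\ell|^2r^{n-1}\,dr$ into the form, and using the spectral gap $\ell(\ell+n-2)\ge n-1$ of $-\Delta_{\mathbb S^{n-1}}$ on $P_n^\perp$, I would borrow from the radial kinetic term to compensate the critical loss and obtain the uniform two-sided bound
\begin{align*}
\frac{n-1}{(n-2)^2/4+n-1}\,\norm{u}_{\dot H^1}^2\le Q_a(u)=\norm{|D_a|u}_{L^2}^2\le\norm{u}_{\dot H^1}^2,\quad u\in P_n^\perp\dot H^1,
\end{align*}
the upper bound being trivial since $a<0$. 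Equivalently, $C_1(-\Delta)\le H_a\le C_2(-\Delta)$ as forms on $P_n^\perp$. Note that this is precisely the gain over Lemma \ref{lemma_2_1}, whose range $0<s<\sigma=1$ excludes the endpoint $s=1$ in the critical case; restriction to the non-radial sector is what recovers it.

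Finally, to pass from $s=1$ to the full range, I would invoke the Heinz--Kato operator monotonicity of $t\mapsto t^\theta$ for $\theta\in[0,1]$: from $C_1(-\Delta)\le H_a\le C_2(-\Delta)$ on $P_n^\perp$ it yields $C_1^\theta(-\Delta)^\theta\le H_a^\theta\le C_2^\theta(-\Delta)^\theta$ on $P_n^\perp$, and taking $\theta=s$ gives \eqref{lemma_2_8_2} for $0\le s\le1$ (with $s=0$ trivial); a duality argument then extends it to $-1\le s<0$. Here it is essential that $P_n^\perp$ commutes with both $-\Delta$ and $H_a$, so the functional calculus restricts cleanly to the non-radial sector. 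The main obstacle is the uniform-in-$\ell$ lower bound at $s=1$: the critical case is exactly borderline for the radial sector, where Hardy's inequality is sharp and non-strict, so one must extract the angular spectral gain $n-1$ on $P_n^\perp$ to produce a strictly positive constant $C_1$. Once that uniform constant is secured, the operator-monotonicity and duality steps are soft.
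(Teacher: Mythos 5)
Your proposal is correct. Part \eqref{lemma_2_8_1} is the same direct computation as in the paper. For part \eqref{lemma_2_8_2} you reach the same endpoint inequality by a mildly different route: the paper invokes the improved Hardy inequality $\frac{n^2}{4}\int|x|^{-2}|u|^2dx\le\norm{\nabla u}_{L^2}^2$ on $P_n^\perp$, proved (following Ekholm--Frank) by the substitution $f=Uu$, which makes the left-over term exactly the angular gradient and then uses the spectral gap $n-1$ of $-\Delta_{\mathbb S^{n-1}}$ on non-constant harmonics; you instead decompose into spherical harmonics, apply the one-dimensional radial Hardy inequality in each sector $\ell\ge1$, and borrow from the radial kinetic term against the gap $\ell(\ell+n-2)\ge n-1$. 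The two arguments are driven by the same angular spectral gap and in fact produce the identical constant $4(n-1)/n^2$; yours has the minor advantage of being self-contained and of making transparent why the constant is uniform in $\ell$, while the paper's substitution is shorter and reuses the map $U$ already introduced for \eqref{lemma_2_8_1}. For the passage to $0<s<1$ you use L\"owner--Heinz operator monotonicity of $t\mapsto t^{s}$ on the reducing subspace $\Ran P_n^\perp$ where the paper says ``interpolation''; these are interchangeable here (both amount to interpolating between $L^2$ and the $s=1$ form bound), and the duality step for $-1\le s<0$ is the same in both. No gaps.
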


%proof
\begin{proof}
A direct computation yields that 
$$
-U^*\Delta_{\R^2}UP_n=-\frac{d^2}{dr^2}-\frac{n-1}{r}\frac{d}{dr}-\frac{(n-2)^2}{4r^2}=H_aP_n
$$
in the polar coordinate and \eqref{lemma_2_8_1} follows. 

Next, \eqref{lemma_2_8_2} with $s=1$ is a consequence of the following Hardy-type inequality: 
\begin{align}
\label{lemma_2_8_proof_2}
\frac{n^2}{4}\int\frac{|u|^2}{|x|^2}dx\le \int|\nabla u|^2dx,\quad u\in P_n^\perp C_0^\infty(\R^n).
\end{align}
A simple proof of \eqref{lemma_2_8_proof_2} can be found in \cite[Lemma 2.4]{EkFr} which we record here for reader's convenience. For any $u\in C_0^\infty(\R^n)$, setting $f=Uu$, we have
\begin{align}
\nonumber
\int \left(|\nabla u|^2-\frac{(n-2)^2}{4|x|^2}|u|^2\right)dx&=c_n^{-2}\int |\nabla f|^2|x|^{-n+2}dx\\
\nonumber
&=c_n^{-2}\int_{\mathbb S^{n-1}}\int_{\R_+}\left(|\partial_r f|^2+\frac{|\nabla_\theta f|^2}{r^2}\right)rdrd\sigma.\\
\label{lemma_2_8_proof_3}
&\ge c_n^{-2}\int_{\R_+}\left(\int_{\mathbb S^{n-1}}|\nabla_\theta f|^2d\sigma\right)r^{-1}dr.
\end{align}
Now we let $u\in P_n^\perp C_0^\infty(\R^n)$ so that $f$ is orthogonal to constants, which are eigenfunctions associated with the zero eigenvalue of the spherical Laplacian $\Delta_{\mathbb S^{n-1}}$. Since the first non-trivial eigenvalue of $\Delta_{\mathbb S^{n-1}}$ is equal to $n-1$, we have 
$$
\int_{\mathbb S^{n-1}}|\nabla_\theta f|^2d\sigma \ge (n-1)\int_{\mathbb S^{n-1}}|f|^2d\sigma.
$$
Hence the right hand side of \eqref{lemma_2_8_proof_3} is bounded from below by
$$
(n-1)\int\frac{|u|^2}{|x|^2}dx
$$
and \eqref{lemma_2_8_proof_2} follows. For $-1\le s<1$, \eqref{lemma_2_8_2} follows from the duality and interpolation. 
\end{proof}

We also need the following Kato-smoothness result on the range of $P_n^\perp$: 

%{lemma}
\begin{lemma}[{\cite[Proposition 3.4]{Miz_JDE}}]
\label{lemma_2_9}
Let $a=-(n-2)^2/4$. Then $|x|^{-1}P^\perp_n$ is $H_a$-smooth, i.e.,
$$
\norm{|x|^{-1}P^\perp_n e^{-itH_a}u_0}_{L^2(\R^{1+n})}\le C\norm{u_0}_{L^2(\R^n)}.
$$
\end{lemma}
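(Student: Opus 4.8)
The plan is to reduce the space--time smoothing bound to a stationary resolvent estimate via Kato's theory, and then to exploit the exact scaling invariance of $H_a$ together with the angular-momentum decomposition to localize the problem to a single energy on the sectors orthogonal to the radial one. First I would recall Kato's criterion: a closed densely defined operator $A$ is $H_a$-smooth if and only if
$$
\sup_{z\in\C\setminus\R}\norm{A(H_a-z)^{-1}A^*}_{\mathbb B(L^2)}<\infty,
$$
and in that case the space--time bound holds with a constant controlled by this supremum. Taking $A=|x|^{-1}P^\perp_n$ and using that $P^\perp_n$ is an orthogonal projection commuting with $H_a$, hence with its resolvent, the statement reduces to proving
$$
\sup_{z\in\C\setminus\R}\norm{|x|^{-1}(H_a-z)^{-1}P^\perp_n|x|^{-1}}_{\mathbb B(L^2)}<\infty. \qquad (\star)
$$

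Next I would use scaling. With the unitary dilation $\mathcal D_\lambda f(x)=\lambda^{n/2}f(\lambda x)$ one checks $\mathcal D_\lambda H_a\mathcal D_\lambda^{-1}=\lambda^{-2}H_a$ and $\mathcal D_\lambda|x|^{-1}\mathcal D_\lambda^{-1}=\lambda^{-1}|x|^{-1}$, whence
$$
\mathcal D_\lambda\,|x|^{-1}(H_a-z)^{-1}|x|^{-1}\,\mathcal D_\lambda^{-1}=|x|^{-1}(H_a-\lambda^2 z)^{-1}|x|^{-1}.
$$
Since $\mathcal D_\lambda$ commutes with $P^\perp_n$ and conjugation by a unitary preserves the operator norm, the norm in $(\star)$ depends on $z$ only through $\arg z$; the supremum over $\C\setminus\R$ therefore reduces to the supremum over $\{|z|=1\}$. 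As $H_a\ge0$, the region $\{\arg z\in[\delta,2\pi-\delta]\}$ is handled by the elementary bound coming from $H_a\ge0$ together with the improved Hardy inequality \eqref{lemma_2_8_proof_2} on $\Ran P^\perp_n$, so that $(\star)$ reduces to the limiting absorption bound at the fixed energy $1$,
$$
\sup_{0<\mu\le1}\norm{|x|^{-1}(H_a-1\pm i\mu)^{-1}P^\perp_n|x|^{-1}}_{\mathbb B(L^2)}<\infty,
$$
for both signs (the two being related by complex conjugation).

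To prove this single-energy estimate I would decompose into spherical harmonics. Writing $L^2(\R^n)=\bigoplus_{\ell\ge0}\mathcal H_\ell$, the projection $P^\perp_n$ removes exactly the radial sector $\ell=0$, and on $\mathcal H_\ell$ the operator $H_a$ is unitarily equivalent, after the substitution $u=r^{-(n-1)/2}w(r)Y_\ell$, to the Bessel-type operator $L_{\nu_\ell}=-\partial_r^2+(\nu_\ell^2-1/4)r^{-2}$ on $L^2(\R_+)$ with
$$
\nu_\ell=\sqrt{\big(\ell+\tfrac{n-2}{2}\big)^2+a}=\sqrt{\ell(\ell+n-2)},\qquad a=-(n-2)^2/4.
$$
The crucial gain is that $\nu_\ell\ge\sqrt{n-1}>0$ for all $\ell\ge1$, so the threshold resonance responsible for the failure of smoothing in the radial sector (where $\nu_0=0$, the critical Bessel case with the logarithmic $Y_0$ singularity) is avoided; this is the quantitative counterpart of the improved Hardy inequality used in Lemma \ref{lemma_2_8}. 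I would then estimate $\norm{r^{-1}(L_{\nu_\ell}-1\pm i\mu)^{-1}r^{-1}}_{\mathbb B(L^2(\R_+))}$ from the explicit resolvent kernel of $L_{\nu_\ell}$, built from the Bessel functions $J_{\nu_\ell},Y_{\nu_\ell}$ (equivalently $H^{(1)}_{\nu_\ell}$), via their uniform asymptotics and the Schur test, the decisive point being a bound uniform in $\ell$. Summing the orthogonal contributions over $\ell\ge1$ then yields the single-energy estimate and, through the preceding reductions, the lemma.

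The main obstacle will be precisely this uniformity in the angular parameter $\ell$: one must control the weighted Bessel kernel $r^{-1}K_{\nu_\ell}(r,r'){r'}^{-1}$ in operator norm by a constant independent of $\nu_\ell\in[\sqrt{n-1},\infty)$, which requires order-uniform estimates on $J_\nu,Y_\nu$ both in the oscillatory regime and across the turning point $r\sim\nu$, where the centrifugal barrier concentrates. The repulsivity of the barrier for large $\ell$ should make the bound improve rather than deteriorate, so the genuinely delicate range is $\nu_\ell$ near $\sqrt{n-1}$; there it is exactly the separation of $\nu_\ell$ from the critical value $0$ that keeps the constant finite, and this is what the projection $P^\perp_n$ secures.
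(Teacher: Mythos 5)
The paper offers no proof of this lemma: it is imported wholesale from \cite[Proposition 3.4]{Miz_JDE}, and the argument there (going back to \cite{BPST1}) is exactly the route you outline — Kato's resolvent criterion \cite[Theorem 5.1]{Kato_MathAnn}, dilation invariance to reduce the supremum over $z$ to a single energy, and separation into spherical harmonics so that the excluded radial sector is precisely the Bessel order $\nu_0=0$ while all remaining sectors have $\nu_\ell=\sqrt{\ell(\ell+n-2)}\ge\sqrt{n-1}$. Your reductions are all correct, including the treatment of $\arg z$ bounded away from the positive real axis via the improved Hardy inequality on $\Ran P_n^\perp$. The one step you state but do not execute — the bound on $\norm{r^{-1}(L_{\nu}-1\mp i\mu)^{-1}r^{-1}}_{\mathbb B(L^2(\R_+))}$ uniform in $\nu\ge\sqrt{n-1}$ and in $\mu$ — is where essentially all of the technical content resides; it is supplied by the order-uniform Hankel-function estimates of \cite{BPST1}, so the argument closes once that input is imported, but as written it remains a cited black box rather than a proof.
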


We are now ready to prove Theorem \ref{theorem_2_7}. 

%proof
\begin{proof}[Proof of Theorem \ref{theorem_2_7}]
The proof is decomposed into three steps. 

{\it Step 1}. We first show the existence of $\mathring W_s^\pm(H_a,H_0;P^\perp_n)$ and $\mathring W_s^\pm(H_0,H_a;P^\perp_n)$. As above, we prove the existence of $\mathring W_s^+(H_0,H_a;P^\perp_n)$ only. Since $P_n^\perp$ commutes with $|x|^{-2}$ and $H_0$, we see that $H_aP_n^\perp=H_a^\perp P_n^\perp$ where $H_a^\perp =H_0^\perp-a|x|^{-2}P_n^\perp$. Hence $$
e^{itH_0}e^{-itH_a}P_n^\perp=e^{itH_0^\perp}e^{-itH_a^\perp}P_n^\perp.
$$
Since $|x|^{-1}P_n^\perp$ is $H_0^\perp$-smooth and $H_a^\perp$-smooth by Lemma \ref{lemma_2_9}, the same argument as above shows that $\mathring W_0^+(H_0,H_a;P^\perp_n)=\mathring W_0^+(H_0^\perp,H_a^\perp;P^\perp_n)$ exists. With the norm equivalence \eqref{lemma_2_8_2} at hand, we also obtain the existence of $\mathring W_s^+(H_0,H_a;P^\perp_n)$ for $|s|<1$ by using the same argument as above. However, this argument cannot be applied directly to the case $s=1$ since we do not know if the norm equivalence \eqref{lemma_2_8_2} holds for some $s>1$. Instead, we employ Theorem \ref{theorem_3_4} in the next section. To this end, we shall check that the conditions (H1)--(H5) in the next section are fulfilled. Let $A=H_0^\perp,B=H_a^\perp,U_A(t)=e^{-itH_0^\perp}$ and $U_B(t)=e^{-itH_a^\perp}$. Then (H1) and (H5) follow from \eqref{lemma_2_8_2} with $s=1$. (H3) and (H4) are general facts which hold for any self-adjoint operator $B$ and its unitary group $e^{-itB}$ (see \cite{ReSi_III}). Note that $H_a^\perp$ is purely absolutely continuous, so $P_{\mathrm{ac}}(H_a^\perp)$ is the identity. To verify (H2), we compute
\begin{align*}
(H_0^\perp+1)^{-1}-(H_a^\perp+1)^{-1}=a(H_a^\perp+1)^{-1}P_n^\perp |x|^{-1}\cdot |x|^{-1}(H_0+1)^{-1}P_n^\perp.
\end{align*}
By \eqref{lemma_2_8_2} and Hardy's inequality \eqref{Hardy}, $\<H_0\>^{1/2} (H_a^\perp+1)^{-1/2}P_n^\perp$ and $(H_a^\perp+1)^{-1/2}|x|^{-1}P_n^\perp$ are bounded on $L^2$, so is the operator
$$
\<H_0\>^{1/2}(H_a^\perp+1)^{-1}P_n^\perp|x|^{-1}=\<H_0\>^{1/2} (H_a^\perp+1)^{-1/2}P_n^\perp\cdot (H_a^\perp+1)^{-1/2}|x|^{-1}P_n^\perp.
$$
Moreover, since $|x|^{-1}\in L^p(\R^n)+L^\infty_0(\R^n)$ with some $n/2<p<n$, $|x|^{-1}(H_0+1)^{-1}P_n^\perp$ is compact on $L^2$ (see e.g. \cite[Example 3.2 and Lemma A.2]{Miz_PAMS}), where $L^\infty_0(\R^n)$ is the completion of $C_0^\infty(\R^n)$ with respect to the $L^\infty$-norm.  Therefore, the operator 
$\<H_0\>^{1/2}((H_0^\perp+1)^{-1}-(H_a^\perp+1)^{-1})$
is compact on $L^2$. Hence (H2) holds. Since (H1)--(H5) are fulfilled, we can apply Theorem \ref{theorem_3_4} to ensure the existence of $\mathring W_1^+(H_0,H_a;P^\perp_n)=\mathring W_1^+(H_0^\perp,H_a^\perp;P^\perp_n)$. 

{\it Step 2}. We next show \eqref{theorem_2_7_1}. Let $u_\pm=\mathring W_s^+(H_0,H_a;P^\perp_n) u_0$. Then $u_\pm\in P_n^\perp \dot H^s$ and 
\begin{align*}
\lim_{t\to\pm\infty}\norm{e^{-itH_a}P_n^\perp u_0-e^{-itH_0}u_\pm}_{\dot H^1}=0
\end{align*}
by the above Step 1. On the other hand, \eqref{lemma_2_8_1} and a density argument imply
\begin{align*}
e^{-itH_a}P_n u_0=U^*e^{it\Delta_{\R^2}}UP_nu_0
\end{align*}
and \eqref{theorem_2_7_1} thus follows. 

{\it Step 3}. We finally show the statement for the wave equation. Let $|D_a^\perp|=(H_a^\perp)^{1/2}$. By the same argument as that for the subcritical case, the wave and inverse wave operators $\mathring W_s^\pm(|D_a^\perp|,|D|;P^\perp_n)$ and $\mathring W_s^\pm(|D|,|D_a^\perp|;P^\perp_n)$  for the half-wave equation exist for $|s|<1$. To deal with the case $s=1$, we consider the quadruplet $(A,B,U_A,U_B)=(H_0^\perp,H_a^\perp,e^{-it|D^\perp|},e^{-it|D_a^\perp|})$. Then the conditions (H1), (H2) and (H5) are the same as in Step 1. Moreover, the conditions (H3) and (H4) in the next section also hold (see Lemma \ref{lemma_3_7}) since $f(\lambda)=\sqrt\lambda$ satisfies $f'>0$ on $(0,\infty)$. Therefore, $\mathring W_1^\pm(|D_a^\perp|,|D|;P^\perp_n)$ and $\mathring W_1^\pm(|D|,|D_a^\perp|;P^\perp_n)$ also exist by Theorem \ref{theorem_3_4}. Then, the same argument as for  the subcritical case yields the existence of $\mathring \Omega_s^\pm(S_a,S_{0,\R^n};P^\perp_n)$ and $\mathring \Omega_s^\pm(S_{0,\R^n},S_a;P^\perp_n)$ for $0<s\le1$. Since $S_a(t)P_n=U^*S_{0,\R^2}(t)UP_n$ which follows from \eqref{lemma_2_8_1}, \eqref{theorem_2_7_2} can be obtained by the same argument as in the proof of  \eqref{theorem_2_7_1}. 
\end{proof}

At the end of this section, we point out an interesting relation between \eqref{NLS} in the critical case and the following 2D inhomogeneous NLS equation:\begin{align}\label{INLS}i\partial_t \varphi+\Delta_{\R^2}\varphi&=\lambda c_n^{-p+1}|x|^{-\frac{(n-2)(p-1)}{2}}|\varphi|^{p-1}\varphi;\quad \varphi:\R\times \R^2\to \C.\end{align}Indeed, by virtue of of the unitarily equivalence \eqref{lemma_2_8_1}, $u(t,r)$ is a radial solution to \eqref{NLS} with $a=-(n-2)^2/4$ if and only if $\varphi(t,r)=r^{(n-2)/2}u(t,r)$ is a radial solution to \eqref{INLS} with the initial condition $\varphi(0,r)=r^{(n-2)/2}u_0$ (at least formally). \eqref{NLW} also has a similar relation with a 2D inhomogeneous NLW equation. Recently, the inhomogeneous NLS equations have attracted increasing attention. There are several existing results on the scattering theory (see e.g. \cite{FaGu}), which might be applicable to study the scattering theory for \eqref{NLS} in the critical case $a=-(n-2)^2/4$. However, we do not pursue this topic in the present paper. 

%section
\section{Abstract scattering theory in Sobolev spaces}
\label{section_3}
In this section we provides an alternative approach in an abstract framework to ensure the existence of wave operators on homogeneous Sobolev spaces. This section is a continuation of the author's previous work \cite{Miz_PAMS} in which the case with inhomogeneous Sobolev spaces was considered. We also give a slight improvement of this previous result which in fact play an important role in the proof for the case with homogeneous Sobolev spaces. %The following Subsection \ref{subsection_3_1} is devoted to an abstract theory. Applications to Schrodinger and wave equations with rough potentials are discussed in Subsection \ref{subsection_4}. 

%\subsection{Abstract criterion for Schr\"odinger-type wave operators}
%\label{subsection_3_1}
Let $\H$ be a (separable) Hilbert space equipped with an inner product $\<\cdot,\cdot\>$ and induced norm $\norm{\cdot}$. $\norm{\cdot}$ also denotes the operator norm on $\H$. With a self-adjoint operator $A$ on $\H$, we associate the Sobolev space $\H_A^s:=\<A\>^{-s/2}\H$ of order $s$ with the norm
$$
\norm{u}_{\H_A^s}=\norm{\<A\>^{s/2}u},
$$
where $\<A\>^{s/2}:=(1+|A|^2)^{s/2}$ is defined via the spectral theorem, namely
$$
\<A\>^{s/2}=\int_{\sigma(A)}\<\lambda\>^{s/2}dE_{A}(\lambda). 
$$
Note that $\H_A^2=D(A)$ and $\H_A^1=D(|A|^{1/2})$.%, where $D(A)$ and $D(|A|^{1/2})$ are the domain and form domain of $A$, respectively. 

Let $A,B$ be two self-adjoint operators and $\{U_A(t)\}_{t\in \R}, \{U_B(t)\}_{t\in \R}$ two families of unitary operators on $\H$. Let $s\in \R$ be fixed. Here we impose the following conditions: 
\begin{itemize}
\item[(H1)] $\H_A^s=\H_B^s$ with equivalent norms. Namely, for any $u\in \H_A^s$ and $v\in \H_B^s$, 
\begin{align*}
%\label{eqn_H1_1}
\norm{u}_{\H_B^s}\le C \norm{u}_{\H_A^s},\quad
%\label{eqn_H1_2}
\norm{v}_{\H_A^s}\le C \norm{v}_{\H_B^s}.
\end{align*}
\end{itemize}
Throughout this section, we always assume (H1) and use the notation 
$$
\H^s:=\H_A^s.
$$
\begin{itemize}
\item[(H2)] With some $r\in \R$, $N\in \N$ and $z_0\in \rho(A)\cap \rho(B)$, $$(A-z_0)^{-N}-(B-z_0)^{-N}\in \mathbb B_\infty(\H^r,\H^s).$$
\item[(H3)] For any $t\in \R$, $U_A(t)$ commutes with $A$, and $U_B(t)$ commutes with $B$. 
\item[(H4)] For any $u\in P_{\mathrm{ac}}(B)\H$, $U_B(t)u\to 0$ weakly in $\H$ as $t\to \pm\infty$. 
\end{itemize}

%remark
\begin{remark}We make several comments on these conditions: 
\label{remark_3_1}
\begin{itemize}
%\item[(1)] It is easy  to see that $\mathbb B_\infty(\H^s)\cup \mathbb B_\infty(\H,\H^s)\subset\mathbb B_\infty(\H^{|s|},\H^s)$. 
\item[(1)] Under (H1), (H2) is independent of the choice of $z_0$. Precisely, if (H2) holds for some $z_0\in \rho(A)\cap \rho(B)$ then $(A-z)^{-N}-(B-z)^{-N}\in \mathbb B_\infty(\H^r,\H^s)$ for any $z\in \rho(A)\cap \rho(B)$. This can be seen from the resolvent equation
\begin{align*}
&(A-z)^{-1}-(B-z)^{-1}\\
&=(A-z)^{-1}(A-z_0)\left\{(A-z_0)^{-1}-(B-z_0)^{-1}\right\}(B-z_0)(B-z)^{-1}
\end{align*}
and an induction argument in $N$. 
\item[(2)] In a precise way, we say that $U_A(t)$ commutes with $A$ if $U_A(t)\H_A^2\subset \H^2_A$ and $U_A(t)Au-AU_A(t)=0$ on $\H_A^2$. Under this condition, $U_A(t)$ also commutes with $\varphi(A)$ for any $\varphi\in L^2_{\loc}(\R)$%\footnote{\ By a direct calculation, $U_A(t)$ commutes with $A$ if and only if $U_A(t)$ commutes with its resolvent $(A-z)^{-1}$ for any $z\in \rho(A)$. Then it follows from Stone's formula that $U_A(t)$ commutes with the spectral measure $E_A(\Omega)$. Finally, the spectral decomposition theorem implies that $U_A(t)$ commutes with $\varphi(A)$.}. 
. In particular, 
$
\norm{U_A(t)u}_{\H_A^s}=\norm{u}_{\H^{s}_A}
$ 
for all $u\in \H^{|s|}_A$ and $s\in \R$. Hence, if $s>0$, $U_A(t)|_{\H_A^s}$ is an isometry on $\H_A^s$, while if $s<0$, $U_A(t)$ can be extended to an isometry on $\H_A^s$. We use the same symbol $U_A(t)$ for such a restriction or an extension. 
\item[(3)] A typical choice of $U_A(t)$ and $U_B(t)$ is $U_A(t)=e^{-itf(A)}$ and $U_B(t)=e^{-itf(B)}$ with some real-valued function $f$ in which case (H3) follows from the spectral theorem. Moreover, for a wide class of functions, we have $P_{\mathrm{ac}}(B)= P_{\mathrm{ac}}(f(B))$ in which case (H4) holds%\footnote{\ For any self-adjoint operator $H_a$, $\<e^{-itH_a}u,v\>$ is given by the Fourier transform of the finite complex measure $d\mu(\lambda)=d\<E_H(\lambda)u,v\>$. If $u\in P_{\mathrm{ac}}(H)\H$, then $d\mu$ is absolutely continuous with respect to $dx$ by definition. Hence $d\mu=gdx$ with some $g\in L^1(\R)$ and we have $\<e^{-itH_a}u,v\>\to 0$ as $t\to\pm\infty$ by Riemann-Lebesgue's lemma.}
. To ensure the property $P_{\mathrm{ac}}(B)= P_{\mathrm{ac}}(f(B))$, it is enough to assume that $B$ is non-negative, $f\in C^1((0,\infty))$ and $f'$ is strictly positive (see Lemma \ref{lemma_3_7} for more details). 
\end{itemize}
\end{remark}
Define the wave operators $W^\pm_s(U_A,U_B)$ on $\H^s$ associated with $U_A(t),U_B(t)$ by
$$
W^\pm_s(U_A,U_B):=\slim_{t\to\pm\infty}U_A(t)^*U_B(t)P_{\mathrm{ac}}(B),
$$
where $U_A(t)^*$ is the adjoint of $U_A(t)$ with respect to $\<\cdot,\cdot\>$ and $\ds\slim_{t\to\pm\infty}$ denotes the strong limit in $\H^s$. Note that $W^\pm_0(U_A,U_B)$ coincide with the standard wave operators defined on $\H$. We also note that $W^\pm_s(U_A,U_B)$ are well-defined (if they exist). Indeed, under (H1) and (H3), $U_A(t),U_B(t),P_{\mathrm{ac}}(B)$ as well as their adjoints extend to bounded operators on $\H^s$ with uniform bounds in $t\in \R$ (see Remark \ref{remark_3_1} (2) above), so $U_A(t)^*U_B(t)P_{\mathrm{ac}}(B)$ is uniformly bounded on  $\H^s$ with respect to $t\in \R$. Hence the definition of $W^\pm_s(U_A,U_B)$ makes sense.

We are now ready to state an abstract result on the existence of $W_s^\pm(U_A,U_B)$.

%theorem
\begin{theorem}
\label{theorem_3_2}
Suppose that the above four conditions {\rm (H1)}--{\rm (H4)} are satisfied for a fixed $s\in \R$. Then, if $W^\pm_0(U_A,U_B)$ exists, so does $W_s^\pm(U_A,U_B)$.  
\end{theorem}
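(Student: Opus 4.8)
The plan is to prove that, for every $u\in\H^s$, the vectors $V(t)u:=U_A(t)^*U_B(t)P_{\mathrm{ac}}(B)u$ form a Cauchy family in $\H^s$ as $t\to\pm\infty$. By (H1) and (H3) the operators $V(t)$ are uniformly bounded on $\H^s$, and convergence in $\H^s$ is equivalent to convergence of $\langle A\rangle^{s/2}V(t)u$ in $\H$. Since $U_A(t)$ commutes with $A$, hence with $\langle A\rangle^{s/2}$ (Remark \ref{remark_3_1}(2)), I would first move that factor inside, $\langle A\rangle^{s/2}V(t)u=U_A(t)^*\langle A\rangle^{s/2}U_B(t)P_{\mathrm{ac}}(B)u$, and then split $\langle A\rangle^{s/2}=\langle B\rangle^{s/2}+(\langle A\rangle^{s/2}-\langle B\rangle^{s/2})$, using that $\langle B\rangle^{s/2}$ commutes with $U_B(t)$ and with $P_{\mathrm{ac}}(B)$.

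This yields the decomposition
\begin{align*}
\langle A\rangle^{s/2}V(t)u=V(t)\langle B\rangle^{s/2}u+U_A(t)^*K\,w_t,
\end{align*}
where $K:=\langle A\rangle^{s/2}\langle B\rangle^{-s/2}-I$ and $w_t:=U_B(t)P_{\mathrm{ac}}(B)\langle B\rangle^{s/2}u$. The first term is handled by hypothesis: $\langle B\rangle^{s/2}u\in\H$ by (H1), and since $W_0^\pm(U_A,U_B)$ exists, $V(t)\langle B\rangle^{s/2}u$ converges in $\H$. For the second term, $U_A(t)^*$ is unitary on $\H$, so it suffices to show $Kw_t\to0$ strongly; here (H4) supplies $w_t\to0$ weakly in $\H$, because $P_{\mathrm{ac}}(B)\langle B\rangle^{s/2}u=\langle B\rangle^{s/2}P_{\mathrm{ac}}(B)u\in P_{\mathrm{ac}}(B)\H$. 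Thus the whole theorem collapses to a single compactness statement, and no density argument is required.

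The crux — and the step I expect to be the main obstacle — is therefore to deduce from (H1) and (H2) that
\begin{align*}
K=\langle A\rangle^{s/2}\langle B\rangle^{-s/2}-I=\langle A\rangle^{s/2}\bigl(\langle B\rangle^{-s/2}-\langle A\rangle^{-s/2}\bigr)\in\mathbb{B}_\infty(\H),
\end{align*}
after which $Kw_t\to0$ follows from compactness of $K$ and weak convergence of $w_t$. For $s>0$ I would set $h(\lambda)=\langle\lambda\rangle^{-s/2}$, which vanishes at infinity, and represent the difference $h(A)-h(B)$ in terms of the resolvent-power difference of (H2) — either through an almost-analytic (Helffer--Sjöstrand) functional calculus or by iterating the resolvent identity down to $(A-z_0)^{-N}-(B-z_0)^{-N}$, freely shifting the spectral parameter by Remark \ref{remark_3_1}(1). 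The subtle point is the unbounded prefactor $\langle A\rangle^{s/2}$: mere compactness of $h(A)-h(B)$ on $\H$ is not enough. One must exploit the regularity gain built into (H2), namely that the resolvent-power difference maps $\H^r$ compactly into $\H^s$, to upgrade $h(A)-h(B)$ to a compact map $\H\to\H^s$; then $\langle A\rangle^{s/2}(h(A)-h(B))$ is a composition of a bounded operator with a compact one on $\H$, hence compact.

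The technical heart is thus the bookkeeping of the indices $r,s$ and the power $N$ so that the operator-valued representation of $h(A)-h(B)$ converges in operator norm as a limit of compact operators while capturing the $\H\to\H^s$ smoothing. The case $s<0$ should be analogous after regrouping $K=(\langle A\rangle^{s/2}-\langle B\rangle^{s/2})\langle B\rangle^{-s/2}$, so that the difference of the $C_\infty$-functions again carries the compactness and the growing factor is absorbed by the smoothing, while $s=0$ is the standing hypothesis. Once this compactness lemma is established, the two reductions above close the argument immediately.
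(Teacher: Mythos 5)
Your reduction is clean, and it is genuinely different from the paper's proof: the identity $\<A\>^{s/2}V(t)u=V(t)\<B\>^{s/2}u+U_A(t)^*Kw_t$ with $K=\<A\>^{s/2}\<B\>^{-s/2}-I$ is correct, the first term converges by the $s=0$ hypothesis, and the second term would vanish if $K$ were compact. The gap is that the compactness of $K$ on $\H$ does \emph{not} follow from (H1)--(H4); in fact it can fail while all the hypotheses and the conclusion of the theorem hold. Take $\H=\ell^2(\N)\oplus L^2(\R)$, $A=\mathrm{diag}(n)\oplus(-\Delta)$, $B=\mathrm{diag}(2n)\oplus(-\Delta)$, $U_A(t)=e^{-itA}$, $U_B(t)=e^{-itB}$, $s=1$. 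Then (H1) holds since $\<n\>\sim\<2n\>$; (H2) holds with $N=1$, $r=0$ because $\<A\>^{1/2}\bigl((A+i)^{-1}-(B+i)^{-1}\bigr)$ is diagonal with entries $O(n^{-1/2})$ on the first summand and vanishes on the second; (H3)--(H4) are clear with $P_{\mathrm{ac}}(B)=0\oplus I$; and $W_0^\pm=0\oplus I$ exists. Yet $K$ contains the block $\mathrm{diag}\bigl(\<n\>^{1/2}\<2n\>^{-1/2}-1\bigr)$ whose entries tend to $2^{-1/2}-1\neq0$, so $K\notin\mathbb B_\infty(\H)$. What your argument actually needs is only that $Kw_t\to0$ for weakly null $w_t\in P_{\mathrm{ac}}(B)\H$ (e.g.\ compactness of $KP_{\mathrm{ac}}(B)$), which happens to hold in this example, but that weaker statement is not what you claim, and it is not addressed by your sketch either.

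Even apart from the counterexample, the two devices you propose for the lemma do not close. The Helffer--Sj\"ostrand (or iterated-resolvent) representation of $h(A)-h(B)$ with $h(\lambda)=\<\lambda\>^{-s/2}$ forces you to shift the spectral parameter from $z_0$ to points $z$ approaching the real axis, which by Remark \ref{remark_3_1}(1) costs a factor of order $(\<z\>/|\Im z|)^{2N}$ in the $\mathbb B(\H^r,\H^s)$ norm; against the best available bound $|\bar\partial\tilde h(z)|\lesssim\<z\>^{-s/2-1-M}|\Im z|^{M}$ this leaves an integrand of size $\<z\>^{-1-s/2}$ on the region $|\Im z|\lesssim\<\Re z\>$, whose integral diverges for $0<s\le2$ --- exactly the range containing the case $s=1$ used in all the applications. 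Moreover, when $r>0$ nothing in your factorization maps $\H$ boundedly into $\H^r$, so (H2) cannot be invoked at all. The paper's proof is organized precisely around these two obstructions: replacing $u$ by $\varphi(B)u$ by density, approximating $\varphi$ by $P^{(N-1)}$ and applying Cauchy's formula puts the resolvent-power difference on the contour $|z|=3R$, at fixed distance from the relevant spectral supports (so no $|\Im z|\to0$ degeneration occurs), while the compactly supported cutoff $\tilde\varphi(B)$ supplies the missing map $\H\to\H^r$. If you wish to retain your operator-theoretic reduction you would have to reinstate such a spectral cutoff and prove compactness of $K\varphi(B)$ rather than of $K$ --- at which point you are essentially rebuilding the paper's argument.
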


This theorem is a slight improvement of the previous result \cite[Theorem 2.2]{Miz_PAMS}. The previous result mainly considered the case $U_A(t)=e^{-itA}$ and $U_B(t)=e^{-itB}$. Hence, in order to apply it to the following pair of dispersive equations
$$
i\partial_tu-f(H_0)u=0,\quad i\partial_tu-f(H_0+V)u=0,
$$
one has to check the equivalence $\H_{f(H_0)}^s=\H_{f(H_0+V)}^s$ and the compactness of $(f(H_0)-z_0)^{-1}-(f(H_0+V)-z_0)^{-1}$. In the present case, choosing $U_A(t)=e^{-itf(A)}$ and $U_B(t)=e^{-itf(B)}$, it is instead enough to check the equivalence $\H_{H_0}^s=\H_{H_0+V}^s$ and the compactness of $(H_0-z_0)^{-1}-(H_0+V-z_0)^{-1}$. This enables us to deal with a wide class of dispersive equations  in a unified way. This is the main advantage of Theorem \ref{theorem_3_2} compared with the previous work. 

%proof
\begin{proof}[Proof of Theorem \ref{theorem_3_2}]
%Although the proof is similar to that of \cite[Theorem 2.2]{Miz_PAMS}, we give its detail for the sake of self-containedness. 
Let $u\in \H^s$ with $\norm{u}=1$ and set $W(t):=U_A(t)^*U_B(t)P_{\mathrm{ac}}(B)$. We shall prove that $\<A\>^{s/2}(W(t)-W(t'))u$ converges to $0$ strongly in $\H$ as $t,t'\to\infty$ which implies the existence of $W_s^+(U_A,U_B)$. The proof for $W_s^-(U_A,U_B)$ is analogous.

Observe first that one can replace $u$ by $\varphi(B)u$ with some $\varphi\in C_0^\infty(\R)$ since $W(t)$ is uniformly bounded on $\H^s$ in $t\in \R$ as seen above and $\{\varphi(B)u\ |\ u\in \H^s,\ \varphi\in C_0^\infty(\R)\}$ is a dense subset of $ \H^s$. We may assume $\supp \varphi\subset[-R,R]$ with some large $R$ and take $\psi \in C_0^\infty(\R)$ so that $\psi\equiv1$ on $[-4R,4R]$.  Using such $\varphi$ and $\psi$, we then decompose $(W(t)-W(t'))\varphi(B)$ as 
\begin{align*}
(W(t)-W(t'))\varphi(B)=\psi(A)(W(t)-W(t'))\varphi(B)+(1-\psi(A))(W(t)-W(t'))\varphi(B).
\end{align*}
The spectral theorem and the support property of $\psi$ imply $
\norm{\<A\>^{s/2}\varphi(A)}\le C R^{s/2}
$ with some $C>0$ which, together with the existence of $W^+_0(A,B)$, implies, as $t,t'\to\infty$, $$\norm{\<A\>^{s/2}\psi(A)(W(t)-W(t'))\varphi(B)u}\le CR^{s/2}\norm{(W(t)-W(t'))\varphi(B)u}\to0.$$ 

Next it remains to show 
\begin{align}
\label{theorem_3_2_proof_1}
\lim_{t,t'\to\infty}\norm{\<A\>^{s/2}(1-\psi(A))(W(t)-W(t'))\varphi(B)u}=0.
\end{align}
To this end, it is enough to prove
\begin{align}
\label{theorem_3_2_proof_2}
\lim_{t\to\infty}\norm{\<A\>^{s/2}(1-\psi(A))\varphi(B)w(t)}=0,
\end{align}
where we set $w(t)=U_B(t)P_{\mathrm{ac}}(B)u$ for short. Indeed, taking  into account (H3), Remark \ref{remark_3_1} (2) and the unitarity of $U_A(t)$, we learn by \eqref{theorem_3_2_proof_2} that
\begin{align*}
\norm{\<A\>^{s/2}(1-\psi(A))W(t)\varphi(B)u}=\norm{\<A\>^{s/2}(1-\psi(A))\varphi(B)U_B(t)P_{\mathrm{ac}}(B)u}\to0
\end{align*}
as $t\to\infty$  and \eqref{theorem_3_2_proof_1} follows. 
 
We shall show \eqref{theorem_3_2_proof_2}. By the Weierstrass approximation theorem, for any $\ep>0$, there exists a polynomial $P$ such that $\norm{\varphi-P^{(N-1)}}_{L^\infty([-2R,2R])}<\ep$, where $P^{(N-1)}$ is the $(N-1)$-th derivative of $P$. Let $\tilde\varphi\in C_0^\infty(\R)$ be such that $\supp\tilde\varphi\subset [-2R,2R]$ and $\tilde\varphi\equiv1$  on $[-R,R]$. Since $\varphi(B)=\varphi(B)\tilde\varphi(B)$ and $(1-\psi(A))\tilde\varphi(B)=(\psi(B)-\psi(A))\tilde \varphi(B)$, we see that
\begin{align*}
&\<A\>^{s/2}(1-\psi(A))\varphi(B)\\
&=\<A\>^{s/2}(\psi(B)-\psi(A))\tilde\varphi(B)[\varphi(B)-P^{(N-1)}(B)]+\<A\>^{s/2}(1-\psi(A))P^{(N-1)}(B)\tilde\varphi(B),
\end{align*}
By virtue of (H1), the support property $\supp\psi\subset[-4R,4R]$ and the following estimate
$$
\norm{\tilde\varphi(B)[\varphi(B)-P^{(N-1)}(B)]}\le C\norm{\varphi-P^{(N-1)}}_{L^\infty([-2R,2R])}<C\ep,
$$
 the first term satisfies, with some $C>0$ independent of $\ep$ and $t$,
\begin{align}
\label{theorem_3_2_proof_3}
\norm{\<A\>^{s/2}(\psi(B)-\psi(A))\tilde\varphi(B)[\varphi(B)-P^{(N-1)}(B)]w(t)}\le C\ep.
\end{align}
It remains to deal with $\<A\>^{s/2}(1-\psi(A))P^{(N-1)}(B)\tilde\varphi(B)w(t)$.  Cauchy's integral formula implies
$$
P^{(N-1)}(\lambda)=\frac{(N-1)!}{2\pi i}\oint_{|z|=3R} P(z)(z-\lambda)^{-N}dz,\quad \lambda\in \supp \tilde\varphi\subset [-2R,2R]. 
$$
Moreover, since $\supp(1-\psi)\subset [-4R,4R]^c$, it follows from Cauchy's theorem that
$$
\oint_{|z|=3R} P(z)(z-\lambda)^{-N}dz=0,\quad\lambda\in \supp (1-\psi).
$$
Hence, by the functional calculus (or the spectral theorem),
\begin{align*}
&\<A\>^{s/2}(1-\psi(A))P^{(N-1)}(B)\tilde\varphi(B)\\
&=-\frac{(N-1)!}{2\pi i}\oint_{|z|=3R} P(z)\<A\>^{s/2}(1-\psi(A))\left((z-A)^{-N}-(z-B)^{-N}\right)\tilde\varphi(B)dz. 
\end{align*}
We set $I(z)=P(z)\<A\>^{s/2}(1-\psi(A))\left((z-A)^{-N}-(z-B)^{-N}\right)\tilde\varphi(B)$ for short. Since $\supp(1-\psi)\subset [-4R,4R]^c$ and $\supp\tilde\varphi\subset [-2R,2R]$, we have by (H1) and the spectral theorem  that
$$
\sup_{|z|=3R}\norm{\<A\>^{s/2}(1-\psi(A)(z-A)^{-N}\tilde\varphi(B)}+\sup_{|z|=3R}\norm{\<A\>^{s/2}(1-\psi(A)(z-B)^{-N}\tilde\varphi(B)}<\infty.
$$
Therefore, $I(z)$ is bounded on $\H$ uniformly in $z$ satisfying $|z|=3R$: 
\begin{align}
\label{theorem_3_2_proof_4}
\sup_{|z|=3R}\norm{I(z)}<\infty.
\end{align}
Hence, setting $\Omega_\ep=\{|z|=3R\}\cap\{|z-3R|<\ep\ \text{or}\ |z+3R|<\ep\}$, there exists $C>0$  independent of $\ep$ and $t$ such that
\begin{align}
\label{theorem_3_2_proof_5}
\bignorm{\int_{\Omega_\ep}I(z)dzw(t)}\le C\ep.
\end{align}
To deal with the integral of $I(z)$  on $\{|z|=3R\}\setminus\Omega_\ep$, we observe from (H1), (H2) and Remark \ref{remark_3_1} (1) that $I(z)\in \mathbb B_\infty(\H)$ for each $z\in \{|z|=3R\}\setminus\Omega_\ep$. Since the integral 
$$
\int_{\{|z|=3R\}\setminus\Omega_\ep}I(z)dz
$$
converges in norm by \eqref{theorem_3_2_proof_4}, it is also compact on $\H$. Therefore, the condition (H4)  implies
\begin{align}
\label{theorem_3_2_proof_6}
\lim_{t\to\infty}\bignorm{\int_{\{|z|=3R\}\setminus\Omega_\ep}I(z)dzw(t)}=0.
\end{align}
Finally, the estimates \eqref{theorem_3_2_proof_3}, \eqref{theorem_3_2_proof_5} and \eqref{theorem_3_2_proof_6} show that, with some $C>0$ independent of $\ep>0$,
$$
\limsup_{t\to\infty}\norm{\<A\>^{s/2}(1-\psi(A))\varphi(B)w(t)}\le C\ep
$$
which implies \eqref{theorem_3_2_proof_2} since $\ep$ is arbitrarily small. This completes the proof. 
\end{proof}

%remark\begin{remark}When $N=1$, one can prove that $\<B\>^{s/2}(1-\psi(A))\varphi(B)$ is compact on $\H$, which also implies \eqref{theorem_3_2_proof_2}, by using the expression $(1-\psi(A))\varphi(B)=(\psi(B)-\psi(A))\varphi(B)$ and Helffer-Sj\"ostrand's formula. This was done in the previous paper \cite{Miz_PAMS}. \end{remark}

We next consider the case of homogeneous Sobolev spaces. Given a self-adjoint operator $T$, we say that $T$ is strictly positive if $\norm{T^{1/2}u}>0$ for any $u\in D(T^{1/2})\setminus\{0\}$. Let $A$ and $B$ be two non-negative self-adjoint operators on $\H$ and $s>0$ such that $A^{s}$ and $B^{s}$ are strictly positive, where $A^{s}$ and $B^{s}$ are defined via the spectral theorem. Then the homogeneous Sobolev space $\dot \H_A^s$ associated with $A$ of order $s$ is defined as the completion of $\H_A^s$ with respect to the norm 
$$
\norm{u}_{\dot \H_A^s}:=\norm{A^{s/2}u}
$$
Under this setting, we impose the following norm equivalence condition: 
\begin{itemize}
\item[(H5)] $\dot \H_A^s=\dot \H_B^s$ with equivalent norms.
\end{itemize}
Under (H5), we set $\dot \H^s:=\dot \H_A^s$. Then we have the following: 

%theorem
\begin{theorem}	
\label{theorem_3_4}
Let $A$ and $B$ be two non-negative self-adjoint operators on $\H$ and $s>0$ such that $A^{s}$ and $B^{s}$ are strictly positive. Suppose that the above five conditions {\rm (H1)}--{\rm (H5)} are fulfilled and that $W^\pm_0(U_A,U_B)$ exist. Then the following strong limits on $\dot \H^s$ also exist:
$$
\mathring W_s^\pm(U_A,U_B):=\slim_{t\to\pm\infty}U_A(t)^*U_B(t)P_{\mathrm{ac}}(B)\quad \text{on}\quad \dot \H^s. 
$$
\end{theorem}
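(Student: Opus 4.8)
The plan is to reduce the existence of $\mathring W_s^\pm(U_A,U_B)$ to a Cauchy criterion on a suitable dense subset of $\dot\H^s$, and then to control the homogeneous norm by splitting into low and high frequencies of $A$: the low part is handled by the assumed existence of $W_0^\pm(U_A,U_B)$ on $\H$, and the high part by the inhomogeneous result Theorem \ref{theorem_3_2}. First I would record that $W(t):=U_A(t)^*U_B(t)P_{\mathrm{ac}}(B)$ is uniformly bounded on $\dot\H^s$. Indeed, since $U_A(t)^*$ commutes with $A^{s/2}$ and $U_B(t)$ and $P_{\mathrm{ac}}(B)$ commute with $B^{s/2}$ by (H3), one computes on the dense domain $\norm{A^{s/2}W(t)u}=\norm{A^{s/2}U_B(t)P_{\mathrm{ac}}(B)u}\le C\norm{B^{s/2}P_{\mathrm{ac}}(B)u}\le C\norm{B^{s/2}u}\le C'\norm{A^{s/2}u}$, using the norm equivalence (H5) at the first and last inequalities and the unitarity of $U_A(t),U_B(t)$. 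Hence it suffices to show that $W(t)u$ is Cauchy in $\dot\H^s$ as $t\to+\infty$ (the case $t\to-\infty$ being identical) for $u$ in a dense subset.

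For that subset I would take $\D:=\{\varphi(B)v:\varphi\in C_0^\infty((0,\infty)),\ v\in\H\}$. Each such $u$ lies simultaneously in $\H$ and in the inhomogeneous space $\H^s=\H^s_B$, because both $\varphi(\lambda)$ and $\<\lambda\>^{s/2}\varphi(\lambda)$ are bounded. The point is that $\D$ is dense in $\dot\H^s$: since $\H^s$ is dense in $\dot\H^s$ by definition of the completion, it is enough to approximate any $u\in\H^s\subset D(B^{s/2})$ by $\varphi_n(B)u$ with $\varphi_n\uparrow 1$ on $(0,\infty)$. Writing $g=B^{s/2}u\in\H$, the spectral theorem gives $\norm{B^{s/2}(u-\varphi_n(B)u)}=\norm{(1-\varphi_n(B))g}\to\norm{E_B(\{0\})g}=0$, where the final equality uses the strict positivity of $B^s$, i.e.\ $E_B(\{0\})=0$. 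This is precisely where the hypothesis that $B^s$ is strictly positive is needed.

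With $u\in\D$ fixed, I would choose $\chi\in C_0^\infty(\R)$ with $\chi\equiv1$ near $0$ and split $A^{s/2}=A^{s/2}\chi(A)+A^{s/2}(1-\chi(A))$. Since $\lambda^{s/2}\chi(\lambda)$ is bounded on $[0,\infty)$, the operator $A^{s/2}\chi(A)$ is bounded on $\H$, so the low-frequency part obeys $\norm{A^{s/2}\chi(A)(W(t)-W(t'))u}\le C\norm{(W(t)-W(t'))u}\to0$ as $t,t'\to+\infty$, because $W(t)u$ converges in $\H$ by the assumed existence of $W_0^+(U_A,U_B)$ and $u\in\H$. For the high-frequency part I would write $A^{s/2}(1-\chi(A))=g(A)\<A\>^{s/2}$ with $g(\lambda)=\lambda^{s/2}(1-\chi(\lambda))\<\lambda\>^{-s/2}$ bounded, so that $\norm{A^{s/2}(1-\chi(A))(W(t)-W(t'))u}\le\norm{g}_\infty\norm{\<A\>^{s/2}(W(t)-W(t'))u}$; the right-hand side tends to $0$ since $u\in\H^s$ and Theorem \ref{theorem_3_2}---whose hypotheses (H1)--(H4) and the existence of $W_0^\pm$ are part of the present assumptions---yields exactly the existence of $W_s^+(U_A,U_B)$ on $\H^s$. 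Combining the two parts shows $\{W(t)u\}$ is Cauchy in the complete space $\dot\H^s$, and the uniform bound together with the density of $\D$ upgrades this to strong convergence on all of $\dot\H^s$.

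The argument is short once Theorem \ref{theorem_3_2} is available, so the genuinely new content is isolated in the two elementary estimates above. I expect the density of $\D$ in $\dot\H^s$ to be the main point requiring care: it is where the strict positivity of $B^s$ (equivalently $E_B(\{0\})=0$) enters, and one must keep the three scales $\H$, $\H^s$, and $\dot\H^s$ cleanly separated so that a single dense family can be used to invoke $W_0^\pm$ at low frequency and Theorem \ref{theorem_3_2} at high frequency simultaneously.
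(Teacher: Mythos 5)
Your proof is correct and takes essentially the same route as the paper: uniform boundedness of $W(t)$ on $\dot\H^s$ from (H3) and (H5), density, and reduction to Theorem \ref{theorem_3_2} via the continuous embedding $\H^s\hookrightarrow\dot\H^s$ (which is exactly what your low/high-frequency split of $A^{s/2}$ proves). The only difference is cosmetic: the paper works directly with the dense subset $\H^s\subset\dot\H^s$, so your more elaborate dense class $\D$ and the accompanying strict-positivity argument are not needed.
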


%proof
\begin{proof}
Under (H3) and (H5), $W(t)=U_A(t)^*U_B(t)P_{\mathrm{ac}}(B)$ is bounded on $\dot \H^s$ uniformly in $t\in \R$. Therefore, we may take $u\in \H^s$ without loss of generality since $\H^s$ is dense in $\dot \H^s$ by definition. Taking into account that $\H^s\hookrightarrow \dot \H^s$ and $W(t)u$ converges strongly in $\H^s$ as $t\to\pm\infty$ by Theorem \ref{theorem_3_2}, $W(t)u$ also converges strongly in $\dot \H^s$  as $t\to\pm\infty$. This completes the proof. 
\end{proof}

\begin{remark}
\label{remark_3_5}
As remarked in the introduction, in order to ensure the existence of wave operators on $\H^s$ (resp. $\dot \H^s$), the above abstract criterions require the norm equivalence condition (H1) (resp. (H5)) only up to the same regularity  $\H^s$ (resp. $\dot \H^s$). 
\end{remark}

\section{Application to scattering theory with rough potentials}
\label{section_4}
This section discusses some applications of the above abstract criterions. Since the inhomogeneous case has been already established in \cite[Section 3]{Miz_PAMS}, we only consider an application of Theorem \ref{theorem_3_4} to the Schr\"odinger and wave equations with rough potentials. 

The same notation as in Section \ref{section_3} is also used in this section. We first prepare simple sufficient conditions to ensure the conditions (H2)--(H4).

%{lemma}
\begin{lemma}
\label{lemma_3_6}
Let $A,B$ be self-adjoint operators on $\H$ such that $\H_A^1=\H_B^1$. Suppose that $\<A\>^{-1/2}(B-A)\<A\>^{-1}$ is compact on $\H$. Then {\rm (H2)} holds for $s=1$. \end{lemma}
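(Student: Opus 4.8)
We must show that (H2) holds for $s=1$, i.e. that with a suitable choice of $N$ and $z_0$ the operator $(A-z_0)^{-N}-(B-z_0)^{-N}$ maps some $\H^r$ compactly into $\H^1=\H_A^1=\H_B^1$. The plan is to take the simplest case $N=1$ and a real $z_0$ below the spectra (or any $z_0\in\rho(A)\cap\rho(B)$), write the resolvent difference via the second resolvent identity, and factor out exactly the compact operator we are handed by hypothesis.

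**Main reduction.** First I would write, for $z_0\in\rho(A)\cap\rho(B)$,
\begin{align*}
(A-z_0)^{-1}-(B-z_0)^{-1}=(A-z_0)^{-1}(B-A)(B-z_0)^{-1},
\end{align*}
interpreting $B-A$ in the form sense since only $\H_A^1=\H_B^1$ is assumed (the difference is a form-bounded perturbation, not an operator one). The target space is $\H^1$, so the relevant quantity is $\<A\>^{1/2}\big[(A-z_0)^{-1}-(B-z_0)^{-1}\big]$ acting from $\H$ (so $r=0$). Inserting $\<A\>^{1/2}$ on the left and splitting the middle factor $(B-A)$ symmetrically, I would aim to exhibit the operator as
\begin{align*}
\underbrace{\<A\>^{1/2}(A-z_0)^{-1}\<A\>^{1/2}}_{\text{bounded}}\cdot\underbrace{\<A\>^{-1/2}(B-A)\<A\>^{-1}}_{\text{compact by hypothesis}}\cdot\underbrace{\<A\>(B-z_0)^{-1}}_{\text{bounded}},
\end{align*}
so that the whole composition is compact as (bounded)$\circ$(compact)$\circ$(bounded). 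The compact middle factor is precisely $\<A\>^{-1/2}(B-A)\<A\>^{-1}$, which is given to be compact on $\H$.

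**The two boundedness claims.** The first factor $\<A\>^{1/2}(A-z_0)^{-1}\<A\>^{1/2}$ is bounded on $\H$ by the spectral theorem for $A$, since $\<\lambda\>\,|\lambda-z_0|^{-1}$ is a bounded function of $\lambda$ on $\sigma(A)$. The delicate factor is the last one, $\<A\>(B-z_0)^{-1}$: here one cannot simply use the spectral theorem for $B$ because $\<A\>$ and $B$ do not commute. The key is that $\H_A^1=\H_B^1$, i.e. $\<A\>^{1/2}$ and $\<B\>^{1/2}$ induce equivalent graph norms; I would write $\<A\>(B-z_0)^{-1}=\<A\>\<B\>^{-1}\cdot\<B\>(B-z_0)^{-1}$, observe that $\<B\>(B-z_0)^{-1}$ is bounded by the spectral theorem for $B$, and that $\<A\>\<B\>^{-1}$ is bounded as a consequence of the equivalence $\H_A^1=\H_B^1$ (one really wants $\<A\>^{1/2}\<B\>^{-1/2}$ bounded, and its iterate; the half-power version follows directly from (H1) and its square from the resolvent comparison). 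This is the step I expect to require the most care: making the commutator-type manipulation $\<A\>(B-z_0)^{-1}$ rigorous really hinges on the norm equivalence $\H_A^1=\H_B^1$, so I would state explicitly which power of $\<A\>\<B\>^{-1}$ is needed and justify its boundedness from (H1) before composing the three factors.

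**Conclusion.** Once the two outer factors are shown bounded and the middle factor is compact by hypothesis, the product is compact from $\H$ to $\H$; peeling off the final $\<A\>^{1/2}$ shows $(A-z_0)^{-1}-(B-z_0)^{-1}\in\mathbb B_\infty(\H,\H^1)=\mathbb B_\infty(\H^0,\H^1)$, which is exactly (H2) with $N=1$, $r=0$, and this choice of $z_0$. By Remark \ref{remark_3_1}(1) the statement is then independent of the chosen $z_0$, completing the proof.
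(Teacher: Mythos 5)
Your overall strategy --- second resolvent identity, then sandwich the hypothesized compact operator between two bounded factors --- is the same as the paper's, but the particular ordering you chose creates a genuine gap at exactly the step you flagged. Writing $(A-z_0)^{-1}-(B-z_0)^{-1}=(A-z_0)^{-1}(B-A)(B-z_0)^{-1}$ forces you to prove that $\<A\>(B-z_0)^{-1}$, equivalently $\<A\>\<B\>^{-1}$, is bounded; that is the statement $\H_B^2\subset \H_A^2$ (inclusion of \emph{operator} domains with comparable graph norms), and it does \emph{not} follow from the form-domain equality $\H_A^1=\H_B^1$. The ``iterate the half-power'' argument fails because $\<A\>^{1/2}\<B\>^{-1/2}$ does not commute with the remaining half-powers, so its boundedness gives no control on the full power $\<A\>\<B\>^{-1}$. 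Worse, in the applications this lemma is built for the claim is actually false: for $A=H_0$ and $B=H_a$ with an inverse-square potential the form domains coincide but $D(H_a)\neq H^2$, so $\<A\>\<B\>^{-1}$ is unbounded.

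The fix is a one-line change: use the resolvent identity with the factors in the other order, $(A+i)^{-1}-(B+i)^{-1}=(B+i)^{-1}(B-A)(A+i)^{-1}$, which is what the paper does. Then the full power of $\<A\>$ demanded by the right-hand weight $\<A\>^{-1}$ in the compactness hypothesis lands on the resolvent of $A$, where $\<A\>(A+i)^{-1}$ is bounded by the spectral theorem, and only a half power of $\<A\>$ has to be moved past the resolvent of $B$, via
\begin{align*}
\<A\>^{1/2}(B+i)^{-1}\<A\>^{1/2}=\<A\>^{1/2}\<B\>^{-1/2}\cdot \<B\>(B+i)^{-1}\cdot \<B\>^{-1/2}\<A\>^{1/2},
\end{align*}
each factor of which is bounded using only $\H_A^1=\H_B^1$ (and its adjoint statement) together with the spectral theorem for $B$. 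With that reordering your argument goes through verbatim: the composition is (bounded)$\circ$(compact)$\circ$(bounded), hence compact, giving (H2) with $(s,N,r)=(1,1,0)$.
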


%proof
\begin{proof}
A direct computation yields 
\begin{align*}
&\<A\>^{\frac12}((A+i)^{-1}-(B+i)^{-1})
=\<A\>^{\frac12}(B+i)^{-1}(B-A)(A+i)^{-1}\\
&=\<A\>^{\frac12}\<B\>^{-\frac12}\cdot \<B\>(B-z)^{-1}\cdot \<B\>^{-\frac12}\<A\>^{\frac12}\cdot \<A\>^{-\frac12}(B-A)\<A\>^{-1}\cdot\<A\>(A+i)^{-1}.
\end{align*}
Then $\<A\>^{-\frac12}(B-A)\<A\>^{-1}\in \mathbb B_\infty(\H)$ by assumption and otherwise are bounded on $L^2$ since $\H_A^1=\H_B^1$. Hence (H2) for $(s,N,r)=(1,1,0)$. 
\end{proof}

Note that if an operator $C$ is $A$-form compact, that is $\<A\>^{-1/2}C\<A\>^{-1/2}\in \mathbb B_\infty(\H)$, then $\<A\>^{-1/2}C\<A\>^{-1}\in \mathbb B_\infty(\H)$, while the converse can fail: $\<\Delta\>^{-1/2}|x|^{-2}\<\Delta\>^{-1}$ is compact on $L^2(\R^n)$ if $n\ge3$, but $|x|^{-2}$ is not $\Delta$-form compact.

%{lemma}
\begin{lemma}
\label{lemma_3_7}
Let $f$ be a real-valued function on $[0,\infty)$ such that $f'>0$ on $(0,\infty)$, $B$ a non-negative self-adjoint operator on $\H$ and $U_B(t)=e^{-itf(B)}$. Then $U_B(t)$ commutes with $B$. Moreover, {\rm (H4)} holds. 
\end{lemma}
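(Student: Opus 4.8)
The plan is to handle the two assertions separately; the commutation is immediate from the functional calculus, while (H4) carries the real content.

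For the commutation, I would simply note that $B$ and $U_B(t)=e^{-itf(B)}$ are both functions of the single self-adjoint operator $B$ and hence are diagonalized simultaneously by the spectral measure $dE_B$. Concretely $e^{-itf(B)}$ is bounded and commutes with every spectral projection $E_B(S)$, so it maps $D(B)=\H_B^2$ into itself and satisfies $e^{-itf(B)}Bu=Be^{-itf(B)}u$ for $u\in D(B)$; this is exactly the notion of commutation from Remark \ref{remark_3_1}(2).

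For (H4) the strategy is to reduce the weak decay of $e^{-itf(B)}u$ to the membership $u\in P_{\mathrm{ac}}(f(B))\H$ and then invoke the Riemann--Lebesgue lemma. The heart of the matter is therefore the inclusion $P_{\mathrm{ac}}(B)\H\subseteq P_{\mathrm{ac}}(f(B))\H$. To prove it I would use the change-of-variables identity $E_{f(B)}(S)=E_B(f^{-1}(S))$, which is legitimate because $f'>0$ makes $f$ strictly increasing, hence injective, on $(0,\infty)$. Given $u\in P_{\mathrm{ac}}(B)\H$, the scalar measure $\mu_u(S):=\norm{E_B(S)u}^2$ is absolutely continuous, and I must show the same for $S\mapsto\mu_u(f^{-1}(S))$. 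For a Lebesgue-null set $S$ I would split $f^{-1}(S)\subseteq\{0\}\cup\big(f^{-1}(S)\cap(0,\infty)\big)$: on $(0,\infty)$ the inverse function theorem gives $f^{-1}\in C^1$ on the open range $f((0,\infty))$, so $f^{-1}$ is locally Lipschitz and maps the null set $S$ to a null set, which is annihilated by absolute continuity of $\mu_u$, while the atom $\{0\}$ contributes nothing since an absolutely continuous measure has no point mass. Thus $\mu_u(f^{-1}(S))=0$, giving $u\in P_{\mathrm{ac}}(f(B))\H$.

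With this inclusion established, the conclusion is routine: for such $u$ and any $v\in\H$ the complex measure $d\<E_{f(B)}(\lambda)u,v\>$ is absolutely continuous (it vanishes on null sets because $E_{f(B)}(S)u=0$ when $|S|=0$), so it has a density $g\in L^1(\R)$, and $\<e^{-itf(B)}u,v\>=\int e^{-it\lambda}g(\lambda)\,d\lambda\to0$ as $t\to\pm\infty$ by Riemann--Lebesgue, which is precisely (H4). The one delicate point, and the step I expect to require care, is the boundary value $\lambda=0$: there $f$ need not be differentiable and the inverse function theorem is unavailable, so the argument must route around it, which is possible exactly because a vector in the absolutely continuous subspace places no mass at the atom $\{0\}$ and the good behaviour of $f^{-1}$ on $(0,\infty)$ then suffices.
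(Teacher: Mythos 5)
Your proposal is correct and follows essentially the same route as the paper: commutation via the spectral theorem, and (H4) via the change-of-variables $E_{f(B)}(S)=E_B(f^{-1}(S))$, preservation of null sets under $f^{-1}$, and the Radon--Nikodym plus Riemann--Lebesgue argument. The only differences are cosmetic: you prove just the inclusion $P_{\mathrm{ac}}(B)\H\subseteq P_{\mathrm{ac}}(f(B))\H$ (which indeed suffices for (H4), whereas the paper establishes the equality) and you supply the local-Lipschitz justification for the null-set claim that the paper merely asserts.
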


%proof
\begin{proof}
By the spectral theorem, $U_B(t)$ commutes with $B$. Moreover, for any $u\in P_{\mathrm{ac}}(f(B))\H$, $U_B(t)u\to0$ weakly in $\H$ as $t\to\infty$ by Radon-Nikodym and Riemann-Lebesgue theorems. Thus it is enough to show $P_{\mathrm{ac}}(B)=P_{\mathrm{ac}}(f(B))$. Let $Y\subset [0,\infty)$ be a null set. Note that $f^{-1}(Y),f(Y)\subset \R$ are also null sets. Hence, for any $u\in P_{\mathrm{ac}}(B)\H$, 
$$
\<E_{f(B)}(Y)u,u\>=\<E_B(f^{-1}(Y))u,u\>=0.
$$
Similarly, if $v\in P_{\mathrm{ac}}(f(B))\H$ then
$$
\<E_{B}(Y)v,v\>=\<E_{f(B)}(f(Y))v,v\>=0.
$$
These two equalities imply $P_{\mathrm{ac}}(B)=P_{\mathrm{ac}}(f(B))$. 
\end{proof}

Now we consider the Schr\"odinger operators
$$
H_0:=-\Delta,\quad H_V:=H_0+V(x),\quad x\in \R^n,\quad n\ge3,
$$
where $V(x)$ is a real-valued potential. We impose one of the following two types of conditions: 

%assumption\begin{assumption}[Form-subordinated potential]\label{assumption_A}Let $n=3$ and $V\in L^1_{\loc}(\R^3)$ satisfy$$\<|V|u,u\>|\le a\norm{\nabla u}^2$$with some $a<1$ for all $u\in H^1(\R^3)$. \ \end{assumption}

%assumption
\begin{assumption}[Repulsive  potential]
\label{assumption_B}
$V\in L^{n/2,\infty}(\R^n)$, $|x| V \in L^{n,\infty}(\R^n)$ and $x \cdot \nabla V \in L^{n/2,\infty}(\R^n)$. Moreover, there exists $\delta>0$ such that  for all $u\in C_0^\infty(\R^n)$, 
\begin{align}
\label{assumption_B_1}
\<(H_0+V)u,u\>\ge \delta\norm{\nabla u}_{L^2},\quad
\<(H_0-V-x\cdot (\nabla V))u,u\>\ge \delta\norm{\nabla u}_{L^2}. 
\end{align}
Here $L^{p,\infty}(\R^n)$ denotes the weak $L^p$-space. 
\end{assumption}

%assumption
\begin{assumption}[Rough potential]
\label{assumption_C}
$V$ belongs to $L^{n/2}(\R^n)$. Moreover, its negative part $V_-(x)=\max\{0,-V(x)\}$ satisfies
$\norm{V_-}_{L^{n/2}(\R^n)}<S_n$, where 
$
S_n:=n(n-2)|\mathbb S^n|^{2/n}/4
$  
is the best constant in Sobolev's inequality 
\begin{align}
\label{assumption_C_1}
S_n\norm{f}_{L^{\frac{2n}{n-2}}(\R^n)}^2\le \norm{\nabla f}_{L^2(\R^n)}^2.
\end{align}
\end{assumption}

%remark
\begin{remark} %Among these, Assumption \ref{assumption_A} allows the strongest local singularity. For instance, it is enough to assume that $V$ is sufficiently small in the Morrey-Campanato space $M^{n/2,\sigma}(\R^3)$ with $\sigma<n/2$, which is strictly larger than $L^{n/2,\infty}(\R^3)$  (see \cite{FePh}). On the other hand, 
Assumption \ref{assumption_B} covers large scaling-critical potentials having critical singularity at the origin, in all dimensions $n\ge3$. A typical example is the inverse-square potential $a|x|^{-2}$ with $a>-(n-2)^2/4$. Assumption \ref{assumption_B} also covers several potentials satisfying $|x|^2V\notin L^\infty$ (see \cite[a discussion below Example 2.2]{BoMi}). On the other hand, there is no condition on the derivatives of $V$ in Assumption \ref{assumption_C}. Moreover, Assumption \ref{assumption_C} allows the potential $V$ having multiple almost critical local singularities.  
\end{remark}

Under one of Assumptions \ref{assumption_B} and \ref{assumption_C}, $H_0+V$ is non-negative (see \eqref{theorem_3_10_proof_2} and its proof below). Define  $H_V$ as its Friedrichs extension and consider the following Schr\"odinger and wave equations associated with $H_V$: 
\begin{align*}
i\partial_t u-H_Vu&=0,\quad u|_{t=0}=u_0;\\
\partial_t^2 v+H_Vv&=0,\quad (v,\partial_t v)|_{t=0}=(v_0,v_1).
\end{align*}
Let $|D_V|=H_V^{1/2}$ and $S_V(t)$ be the evolution group for the wave equation associated with $H_V$:
$$
S_V(t)=\begin{pmatrix}\cos(t|D_V|)&|D_V|^{-1}\sin(t|D_V|)\\-|D_V|\sin(t|D_V|)&\cos(t|D_V|)\end{pmatrix}.
$$
Then we have the following:

%theorem
\begin{theorem}	
\label{theorem_3_10}
Let $n\ge3$ and $V$ be a real-valued function satisfying one of Assumption \ref{assumption_B} or Assumption \ref{assumption_C}. Then $\mathring W^\pm_s(H_V,H_0)$ and $\mathring W^\pm_s(H_0,H_V)$ defined on $\dot H^s$ exist for all $-1<s\le1$. Moreover, if $0\le s\le1$, $\mathring\Omega^\pm_s(S_V,S_0)$ and $\mathring\Omega^\pm_s(S_0,S_V)$  defined on $\dot H^s\times \dot H^{s-1}$ also exist. 
\end{theorem}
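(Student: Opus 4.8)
The plan is to apply the abstract criterion of Theorem \ref{theorem_3_4} to the pair $A=H_0$, $B=H_V$, first for the Schr\"odinger propagators $U_A(t)=e^{-itH_0}$, $U_B(t)=e^{-itH_V}$ (corresponding to $f(\lambda)=\lambda$), and then for the half-wave propagators $U_A(t)=e^{-it|D|}$, $U_B(t)=e^{-it|D_V|}$ (corresponding to $f(\lambda)=\sqrt\lambda$). Since conditions (H1), (H2) and (H5) involve only $A$ and $B$, they need to be verified once. The decisive point is the endpoint $s=1$: for $0<s<1$ and $-1<s<0$ the conclusion will follow from the cases $s=0,1$ by the interpolation argument of Step 2 in the proof of Theorem \ref{theorem_2_2}, whereas at $s=1$ the norm equivalence is available only up to $\dot H^1$, so Theorem \ref{theorem_3_4} is genuinely needed (cf. Remark \ref{remark_3_5}).

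First I would verify the norm equivalences. At order $s=1$ the upper bound $\<H_Vu,u\>\le C\norm{u}_{\dot H^1}^2$ follows by estimating $\<Vu,u\>$ via Hardy's inequality \eqref{Hardy} under Assumption \ref{assumption_B} (using $V\in L^{n/2,\infty}$) or via Sobolev's inequality \eqref{assumption_C_1} under Assumption \ref{assumption_C}, while the matching lower bound $\norm{u}_{\dot H^1}^2\le C\<H_Vu,u\>$ is exactly the coercivity \eqref{assumption_B_1} (resp. the condition $\norm{V_-}_{L^{n/2}}<S_n$). This form equivalence $\<H_Vu,u\>\sim\norm{u}_{\dot H^1}^2$ gives (H5) at $s=1$ directly and (H1) at $s=1$ after adding $\norm{u}_{L^2}^2$; it also shows that $H_V$ is strictly positive, so $\dot \H_{H_V}^1$ is well defined, and by duality and interpolation the equivalence $\norm{u}_{\dot H^{s}}\sim\norm{H_V^{s/2}u}_{L^2}$ extends to all $-1\le s\le 1$. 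For (H2) I would invoke Lemma \ref{lemma_3_6} with $B-A=V$: it remains to check that $\<H_0\>^{-1/2}V\<H_0\>^{-1}$ is compact on $L^2$, which under Assumption \ref{assumption_C} is immediate from $V\in L^{n/2}$ and under Assumption \ref{assumption_B} follows from $V\in L^{n/2,\infty}$ together with the extra smoothing encoded in the remark following Lemma \ref{lemma_3_6}. Finally (H3) and (H4) hold for both propagator pairs by Lemma \ref{lemma_3_7}, since $f(\lambda)=\lambda$ and $f(\lambda)=\sqrt\lambda$ both have strictly positive derivative on $(0,\infty)$.

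The remaining and, I expect, hardest input is the existence of the standard $L^2$ wave operators $W_0^\pm(H_V,H_0)$, $W_0^\pm(H_0,H_V)$ and their half-wave counterparts $\mathring W_0^\pm(|D_V|,|D|)$, $\mathring W_0^\pm(|D|,|D_V|)$. I would obtain the former by Kato's smooth-perturbation method: writing $V=|V|^{1/2}(\sgn V)|V|^{1/2}$, existence reduces to the $H_0$- and $H_V$-smoothness of $|V|^{1/2}$. Under Assumption \ref{assumption_C} this smoothness follows from the Strichartz estimate $\norm{e^{-itH_0}u}_{L^2_tL^{2n/(n-2)}_x}\lesssim\norm{u}_{L^2}$ (and its $H_V$-analogue, valid because the negative part is small) together with H\"older and $|V|^{1/2}\in L^n$; under Assumption \ref{assumption_B} the same smoothing estimate is furnished by the repulsivity/virial conditions \eqref{assumption_B_1}, exactly as the inverse-square model is treated by Lemma \ref{lemma_2_5}. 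The half-wave operators on $L^2$ then follow from the invariance principle of Appendix \ref{appendix_A}, precisely as in Step 3 of the proof of Theorem \ref{theorem_2_2}. Establishing these smoothing bounds for the scaling-critical class of Assumption \ref{assumption_B} is the technical heart of the matter.

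With (H1)--(H5) and the $L^2$ wave operators in hand, Theorem \ref{theorem_3_4} yields all four of $\mathring W_1^\pm(H_V,H_0)$, $\mathring W_1^\pm(H_0,H_V)$, $\mathring W_1^\pm(|D_V|,|D|)$, $\mathring W_1^\pm(|D|,|D_V|)$ on $\dot H^1$. To pass to $0<s<1$ I would interpolate the convergence on $L^2$ and on $\dot H^1$, and to reach $-1<s<0$ I would interpolate the $L^2$ convergence against the uniform $\dot H^{s'}$-boundedness for some $-1<s'<s$, which is legitimate since the norm equivalence already holds on all $\dot H^{s'}$ with $-1\le s'\le 1$; the endpoint $s=-1$ is excluded because this interpolation would require the equivalence beyond $\dot H^{-1}$. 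This gives the Schr\"odinger statement for $-1<s\le1$. For the wave equation I would reconstruct $\mathring\Omega_s^\pm(S_V,S_0)$ and $\mathring\Omega_s^\pm(S_0,S_V)$ from the half-wave operators through the matrix decomposition of $S_V(t)$ exactly as in Step 4 of the proof of Theorem \ref{theorem_2_2}; the restriction $0\le s\le1$ arises because this construction uses the norm equivalence simultaneously on $\dot H^s$ and $\dot H^{s-1}$, forcing $-1\le s-1$ and $s\le1$.
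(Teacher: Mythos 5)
Your proposal is correct and follows essentially the same route as the paper: verification of (H1)--(H5) for the pair $(H_0,H_V)$ with the norm equivalence obtained from H\"older/Sobolev plus the coercivity hypotheses, the $L^2$ wave operators via Kato smoothness of $|V|^{1/2}$ (the paper cites \cite{BoMi} under Assumption \ref{assumption_B} and a uniform Sobolev estimate from \cite{Miz_APDE} under Assumption \ref{assumption_C}, rather than Strichartz estimates, but these are interchangeable inputs here), the invariance principle for the half-wave operators, Theorem \ref{theorem_3_4} at the endpoint $s=1$, and interpolation plus the matrix decomposition of $S_V(t)$ for the remaining ranges. The one step you leave vague is (H2) under Assumption \ref{assumption_B}: $V\in L^{n/2,\infty}$ alone does not yield compactness of $\<H_0\>^{-1/2}V\<H_0\>^{-1}$; the paper uses the hypothesis $|x|V\in L^{n,\infty}$ to factor this operator as $\<H_0\>^{-1/2}|x|V\cdot |x|^{-1}\<H_0\>^{-1}$, a bounded operator composed with a compact one, which is indeed the mechanism of the remark following Lemma \ref{lemma_3_6} that you point to.
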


As in the previous section, the following Kato-smoothness result plays a crucial role in the proof of the theorem. 

%{lemma}
\begin{lemma}
\label{lemma_3_11}
Let $n\ge3$. Under Assumption \ref{assumption_B} or \ref{assumption_C}, any functions in $L^{n/2,\infty}(\R^n)$ are both $H_0$-smooth and $H_V$-smooth. %Moreover, if $n=3$ then $|V|^{1/2}$ is  both $H_0$-smooth and $H$-smooth under Assumption \ref{assumption_A}. 
\end{lemma}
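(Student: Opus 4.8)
The plan is to reduce everything, through Kato's theory of smooth perturbations, to uniform resolvent bounds, and then to handle the two operators by a single resolvent identity. Write $R_0(z)=(H_0-z)^{-1}$ and $R_V(z)=(H_V-z)^{-1}$. For a multiplication operator $\Gamma$ it suffices to bound $\sup_{z\in\C\setminus\R}\norm{\Gamma(A-z)^{-1}\Gamma^*}$ in order to conclude that $\Gamma$ is $A$-smooth. Taking $\Gamma$ to be multiplication by $|w|^{1/2}$ (so that $\Gamma^*\Gamma$ is multiplication by $|w|$) and noting that $w\in L^{n/2,\infty}(\R^n)$ forces $|w|^{1/2}\in L^{n,\infty}(\R^n)$, the lemma reduces to proving, for every $g\in L^{n,\infty}(\R^n)$,
$$\sup_{z\in\C\setminus\R}\norm{gR_0(z)\bar g}_{\mathbb B(L^2)}<\infty,\qquad \sup_{z\in\C\setminus\R}\norm{gR_V(z)\bar g}_{\mathbb B(L^2)}<\infty.$$
Factoring $V=v_1v_2$ with $v_2=|V|^{1/2}$ and $v_1=(\sgn V)|V|^{1/2}$, both lying in $L^{n,\infty}$, the symmetric resolvent identity
$$gR_V(z)\bar g=gR_0(z)\bar g-gR_0(z)v_1\bigl(I+v_2R_0(z)v_1\bigr)^{-1}v_2R_0(z)\bar g$$
shows that the second bound follows from two ingredients: (A) the first bound, for all $L^{n,\infty}$ weights; and (B) the uniform invertibility $\sup_z\norm{(I+v_2R_0(z)v_1)^{-1}}<\infty$, the supremum being taken up to the real axis.

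Ingredient (A) is by now standard and parallels Lemma \ref{lemma_2_5}. By the Kenig--Ruiz--Sogge uniform Sobolev estimate one has $\norm{R_0(z)}_{L^{2n/(n+2)}\to L^{2n/(n-2)}}\le C$ uniformly in $z\in\C\setminus[0,\infty)$, since the power of $|z|$ vanishes at the dual Sobolev pair. Hölder's inequality in Lorentz spaces shows that multiplication by $g\in L^{n,\infty}$ maps $L^2=L^{2,2}$ into $L^{2n/(n+2),2}$ and maps $L^{2n/(n-2),2}$ into $L^2$; together with the real-interpolated Lorentz form of the uniform Sobolev estimate this yields $\sup_z\norm{gR_0(z)\bar g}<\infty$. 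The same computation bounds $gR_0(z)v_1$ and $v_2R_0(z)\bar g$ uniformly in $z$.

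Ingredient (B) is the heart of the matter, and it is here that Assumptions \ref{assumption_B} and \ref{assumption_C} enter. By (A) the family $z\mapsto v_2R_0(z)v_1$ is uniformly bounded; it is small when $z$ is far from $[0,\infty)$ and, by the high-energy decay of the weighted resolvent, for $|z|$ large, so that $I+v_2R_0(z)v_1$ is invertible there by a Neumann series. It remains to invert uniformly on a compact set of energies up to the real axis, which by a limiting absorption argument and Fredholm theory reduces to the pointwise invertibility of $I+v_2R_0(\lambda\pm i0)v_1$ for $\lambda$ in a compact subset of $[0,\infty)$; this in turn is equivalent to the absence of embedded positive eigenvalues of $H_V$ and of a zero-energy eigenvalue or resonance. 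The two hypotheses furnish exactly these spectral facts, by different mechanisms. Under Assumption \ref{assumption_B} the two repulsivity bounds combine into the global positive-commutator identity
$$i[H_V,F]=2H_0-x\cdot\nabla V=H_V+(H_0-V-x\cdot\nabla V)\ge 2\delta H_0,$$
where $F=\tfrac12(x\cdot D+D\cdot x)$ is the generator of dilations; this Mourre/virial positivity excludes eigenvalues and delivers the limiting absorption principle, hence the uniform invertibility, directly and for the large potentials allowed here. Under Assumption \ref{assumption_C}, Sobolev's inequality \eqref{assumption_C_1} gives $H_V\ge(1-\norm{V_-}_{L^{n/2}}/S_n)H_0>0$, so $\langle H_Vu,u\rangle\ge c\norm{\nabla u}_{L^2}^2$ rules out any zero-energy mode, while the absence of positive eigenvalues is the known fact for $V\in L^{n/2}(\R^n)$; since $v_1,v_2\in L^{n}(\R^n)$ in this case, $v_2R_0(\lambda\pm i0)v_1$ is compact and Fredholm theory applies verbatim.

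Granting (A) and (B), the resolvent identity yields the uniform $H_V$-bound for every $g\in L^{n,\infty}$, and Kato's criterion converts both uniform bounds into the asserted $H_0$- and $H_V$-smoothness. I expect the main obstacle to be ingredient (B), specifically the pointwise invertibility up to the real axis: the exclusion of embedded positive eigenvalues and of a zero-energy resonance, with the attendant uniform-in-$\lambda$ control at the spectral endpoints. This is exactly the step that cannot be reached by a Neumann series for the large potentials of Assumption \ref{assumption_B} (where one must exploit the virial positivity) nor under Assumption \ref{assumption_C} (where only $V_-$, not $V$, is small), and it is where the structural hypotheses are indispensable.
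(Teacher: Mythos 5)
Your top-level strategy coincides with the paper's: reduce smoothness to the uniform boundedness of the weighted resolvent via Kato's criterion \cite[Theorem 5.1]{Kato_MathAnn}, and obtain that bound from a uniform Sobolev estimate combined with H\"older's inequality in Lorentz spaces. (Your silent replacement of ``$w\in L^{n/2,\infty}$ is smooth'' by ``$|w|^{1/2}\in L^{n,\infty}$ is smooth'' is in fact the version the paper needs and uses in Step~2 of the proof of Theorem \ref{theorem_3_10}; the exponent $n/2$ in the statement should be read as $n$, since the H\"older inequalities \eqref{Holder} only close for $L^{n,\infty}$ weights sandwiching the resolvent.) The real difference is that the paper does not prove the perturbed resolvent estimate at all: under Assumption \ref{assumption_B} it cites \cite[Corollary 2.7]{BoMi} for the smoothing estimate outright, and under Assumption \ref{assumption_C} it cites the uniform Sobolev estimate for $H_V$ from \cite[Theorem 1.4]{Miz_APDE}. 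Your ingredient (A) and your Assumption \ref{assumption_C} branch are a fair outline of the proof behind the latter citation (symmetric resolvent identity, Kenig--Ruiz--Sogge in Lorentz spaces, compactness and Fredholm theory, absence of embedded eigenvalues for $V\in L^{n/2}$ --- itself a deep theorem of Ionescu--Jerison/Koch--Tataru that must be invoked, not merely ``known'' --- and strict positivity at zero), though you still owe the norm-continuity of $z\mapsto v_2R_0(z)v_1$ up to the boundary of $[0,\infty)$ including the endpoint $z=0$, and the high-energy decay, which for $v_i\in L^{n}$ does follow by truncation.

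The genuine gap is in the Assumption \ref{assumption_B} branch. The Birman--Schwinger framework you set up cannot carry that case: for scaling-critical weights $v_i\in L^{n,\infty}$ (e.g.\ $|x|^{-1}$, which is exactly what $V=a|x|^{-2}$ produces) the operator $v_2R_0(z)v_1$ is neither small at high energy --- its norm is constant along the rays $z\mapsto\lambda^2 z$ by scaling --- nor compact, so neither the Neumann series nor Fredholm theory applies, and the potentials of Assumption \ref{assumption_B} may be large. Your substitute, the virial inequality $i[H_V,F]=2H_0-x\cdot\nabla V=H_V+(H_0-V-x\cdot\nabla V)\ge 2\delta H_0$, is correctly computed and is indeed the mechanism; but the claim that it ``delivers the limiting absorption principle, hence the uniform invertibility, directly'' compresses the entire content of \cite{BoMi} into one sentence. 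Passing from a global commutator bound with the unbounded conjugate operator $F$ to resolvent estimates that are uniform over all of $\C\setminus[0,\infty)$, including both thresholds $z\to 0$ and $|z|\to\infty$, with $L^{n,\infty}$ weights is not a standard application of Mourre theory (which yields only local-in-energy LAP with $\<F\>^{-s}$ weights, $s>1/2$). As written, this branch asserts rather than proves the key estimate; you must either execute the Kato--Putnam-type argument of \cite{BoMi} or cite it, as the paper does.
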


%proof
\begin{proof}
Under Assumption \ref{assumption_B}, the desired result is due to \cite[Corollary 2.7]{BoMi}. In the case under  Assumption \ref{assumption_C}, the following uniform Sobolev estimate was proved in \cite[Theorem 1.4]{Miz_APDE}): 
$$
\norm{(H_V-z)^{-1}f}_{L^{\frac{2n}{n-2},2}}\le C\norm{f}_{L^{\frac{2n}{n+2},2}},\quad z\in \C\setminus[0,\infty). 
$$
where $L^{p,q}(\R^n)$ denote the Lorentz spaces (see a textbook \cite{Gra} for the definition and properties of Lorentz spaces). Note that $H_V$ is purely absolutely continuous under Assumption \ref{assumption_B} and there is no zero resonance \cite[Lemma 1.3]{Miz_APDE}. Combining  with H\"older's inequalities for Lorentz norms
\begin{align}
\label{Holder}
\norm{fg}_{L^{2}}\le C\norm{f}_{L^{\frac n2,\infty}}\norm{g}_{L^{\frac{2n}{n-2},2}}, \quad
\norm{fg}_{L^{\frac{2n}{n+2},2}}\le C\norm{f}_{L^{\frac n2,\infty}}\norm{g}_{L^{2}}, 
\end{align}
the above uniform Sobolev estimate implies that, for any $w\in L^{n/2,\infty}$, $w(H_V-z)^{-1}w$ is bounded on $L^2$ uniformly in $z\in \C\setminus\R$. By means of the theory of smooth perturbations \cite[Theorem 5.1]{Kato_MathAnn}, we conclude that $w$ is $H_V$-smooth. This completes the proof. %Finally, under Assumption \ref{assumption_A}, it was shown in \cite[Lemma 1]{FKV} that $$\norm{|V|^{1/2}(H_0-z)^{-1}|V|^{1/2}}\le a,\quad z\in \C\setminus[0,\infty),$$where $\norm{\cdot}$ denotes the operator norm on $L^2$. The proof essentially relies on the pointwise bound$$|G_z(x,y)|\le G_0(x,y),\quad z\in \C\setminus[0,\infty),\ x,y\in \R^3,$$for the kernel $G_0(x,y)=(4\pi|x-y|)^{-1}e^{-\sqrt z|x-y|}$ of the free resolvent $(H_0-z)^{-1}$ in three dimensions. Note that this bound holds only for three dimensions. Since $a<1$, the Birman-Schwinger operator $I+|V|^{1/2}(H_0-z)^{-1}|V|^{1/2}$ can be invertible via the Neumann series, satisfying $$||(I+|V|^{1/2}(H_0-z)^{-1}|V|^{1/2})^{-1}||\le (1-a)^{-1}$$Hence, by the second resolvent equation, $|V|^{1/2}(H_V-z)^{-1}|V|^{1/2}$ is written in the form$$|V|^{1/2}(H_V-z)^{-1}|V|^{1/2}$$
\end{proof}
%proof

\begin{proof}[Proof of Theorem \ref{theorem_3_10}] The proof is decomposed into three steps. 

{\it Step 1}. Setting $A=H_0,B=H_V,U_A(t)=e^{-itH_0}$ and $U_B(t)=e^{-itH_V}$, we first check that the conditions (H1)--(H5) are fulfilled under one of Assumption \ref{assumption_B} or Assumption \ref{assumption_C}. It follows from \eqref{Holder} and Sobolev's inequality 
\begin{align}
\label{theorem_3_10_proof_1}
\norm{f}_{L^{\frac{2n}{n-2},2}(\R^n)}\le C\norm{\nabla f}_{L^2}
\end{align}
that
$$
\int |V||f|^2dx\le C\norm{V}_{L^{\frac n2,\infty}(\R^n)}\norm{f}_{L^{\frac{2n}{n-2},2}(\R^n)}^2\le C\norm{V}_{L^{\frac n2,\infty}(\R^n)}\norm{\nabla f}_{L^2}^2.
$$
Hence, for both cases, $V$ is $H_0$-form bounded. This fact, together with \eqref{assumption_B_1} in case of Assumption \ref{assumption_B} or with the bound $\norm{V_-}_{L^{n/2}(\R^n)}<S_n$ and \eqref{assumption_C_1} in case of Assumption \ref{assumption_C}, yields
\begin{align}
\label{theorem_3_10_proof_2}
C^{-1}\norm{f}_{\dot H^1}\le \norm{|D_V|f}_{L^2}\le C\norm{f}_{\dot H^1},\quad f\in C_0^\infty(\R^n).
\end{align}
Combining with the duality and interpolation arguments, \eqref{theorem_3_10_proof_2} implies   (H1) and (H5) hold for $|s|\le1$ in both cases. (H3) and (H4) are general facts for a unitary group generated by a self-adjoint operator. Under Assumption \ref{assumption_C}, (H2) follows from the $H_0$-form compactness of $V\in L^{n/2}(\R^n)$ and Lemma \ref{lemma_3_6}. Under Assumption \ref{assumption_B}, $V$ is $H_0$-form bounded, but not $H_0$-form compact. However, if we write
$\<H_0\>^{-1/2}V\<H_0\>^{-1}=\<H_0\>^{-1/2}|x|V\cdot |x|^{-1}\<H_0\>^{-1}
$ then $\<H_0\>^{-1/2}|x|V$ is bounded on $L^2$ by \eqref{theorem_3_10_proof_1} and the condition $|x|V\in L^{n,\infty}$ and $|x|^{-1}\<H_0\>^{-1}$ is compact on $L^2$ as seen in the Step 1 of the proof of Theorem \ref{theorem_2_7}. Hence, by Lemma \ref{lemma_3_6}, (H2) also follows under Assumption \ref{assumption_B}. Note that if we choose $A=H_V,B=H_0,U_A(t)=e^{-itH_V}$ and $U_B(t)=e^{-itH_0}$, then (H1)--(H5) can be also verified by the same argument. 

{\it Step 2}. We next show the existence of $\mathring W^\pm_s(H_V,H_0)$ and $\mathring W^\pm_s(H_0,H_V)$. By virtue of Step 1 and Theorem \ref{theorem_3_4}, it sufficient to show the case $s=0$ only. Then, by the same argument as that in the Step 1 of the proof of Theorem \ref{theorem_2_7}, it can be deduced from Lemma \ref{lemma_3_11}  since $|V|^{1/2}\in L^{n,\infty}$   under one of Assumption \ref{assumption_B} or Assumption \ref{assumption_C}. 

{\it Step 3}. By virtue of Lemma \ref{lemma_3_11} and the invariance principle of the wave operators (see Theorem \ref{theorem_appendix_A_2} below), the wave and inverse wave operators for half-wave equation $\mathring W^\pm_s(|D_V|,|D|)$ and $\mathring W^\pm_s(|D|,|D_V|)$ also exist. Therefore, the same argument as in the Step 4 of the proof of Theorem \ref{theorem_2_7} yields the existence of $\Omega^\pm_s(S_V,S_0)$ and $\Omega^\pm_s(S_0,S_V)$. This competes the proof. 
\end{proof}

\section*{Acknowledgments}
The author is partially supported by JSPS KAKENHI Grant-in-Aid for Young Scientists (B) \#JP17K14218 and Grant-in-Aid for Scientific Research (B) \#JP17H02854.

\appendix

\section{The invariance principle of wave operators}
\label{appendix_A}
In this appendix we consider the invariance principle of the wave operators originated by \cite{Birman,Kato_PJM}. The version we record here is due to \cite{KakoYajima}. We only state a particular consequence of \cite[Theorem 4.3]{KakoYajima} which is sufficient for our purpose. The same notation as in Section \ref{section_3} is used in this appendix. Let $A$ be a self-adjoint operator on $\H$, $G$ a densely defined closed operator satisfying $D(A)\subset D(G)$. Recall that $G$ is said to be $A$-smooth if 
$$
\norm{Ge^{-itA}\varphi}_{L^2(\R;\H)}\le C\norm{\varphi},\quad \varphi\in D(A).
$$

%remark
\begin{remark}
\label{remark_appendix_A_1}
It is known (see \cite[Theorem 5.1]{Kato_MathAnn}) that $G$ is $A$-smooth if and only if 
\begin{align*}
\sup_{0<\ep<1}\norm{G(A-\lambda\mp i\ep)^{-1}\varphi}_{L^2(\R_\lambda;\H)}\le C\norm{\varphi}. 
\end{align*}
In particular, $G(A-\lambda-i\ep)^{-1}\varphi$ belongs to the Hardy space $H^2(\Omega_\pm;\H)$, where $\Omega_\pm=\{z\in \C\ |\ 0<\pm \Im z<1\}$. Therefore, for all $\varphi\in \H$, the strong limits 
$$
G(A-\lambda\mp i0)^{-1}\varphi:=\lim_{\ep\searrow0}G(A-\lambda\mp i\ep)^{-1}\varphi\in \H
$$
exist for a.e. $\lambda\in \R$ and satisfy
\begin{align}
\label{remark_appendix_A_1_1}
\norm{G(A-\lambda\mp i0)^{-1}\varphi}_{L^2(\R_\lambda;\H)}\le C\norm{\varphi}. 
\end{align}
\end{remark}

%theorem
\begin{theorem}	
\label{theorem_appendix_A_2}
Let $A,B$ be non-negative self-adjoint operators on $\H$. Suppose there exist densely defined closed operators $V_1,V_2$ with $D(A)\subset D(V_1)$ and $D(B)\subset D(V_2)$ such that $A-B=V_1^*V_2$ in the sense of forms and that $V_1$ is $A$-smooth and $V_2$ is $B$-smooth. Let $f$ be a real-valued function on $[0,\infty)$ such that $f\in C^2(0,\infty)$ and $f'(\lambda)>0$ for all $\lambda>0$. Then the wave operators 
$$
W^\pm(f(A),f(B)):=\slim_{t\to\pm \infty}e^{itf(A)}e^{-itf(B)}
$$
defined on $\H$ exist and coincide with $W^\pm(A,B):=\slim_{t\to\pm \infty}e^{itA}e^{-itB}$. 
\end{theorem}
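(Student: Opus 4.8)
The plan is to establish the statement in two stages: first the existence of $W^\pm(A,B)=\slim_{t\to\pm\infty}e^{itA}e^{-itB}$, obtained directly from the Kato-smoothness hypotheses by Cook's method; and then the invariance identity $W^\pm(f(A),f(B))=W^\pm(A,B)$, which simultaneously yields the existence of the left-hand side and for which the monotonicity $f'>0$ is essential. The smoothness of $V_1,V_2$ enters both the Cook argument and, via the limiting absorption principle of Remark \ref{remark_appendix_A_1}, the stationary representation driving the invariance step.

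For the first stage, fix $u\in D(B)$ and a test vector $v\in D(A)$ with $\norm v=1$. Since $A-B=V_1^*V_2$ in the form sense, differentiating $\<e^{itA}e^{-itB}u,v\>$ and integrating give
\begin{align*}
\<(e^{itA}e^{-itB}-e^{it'A}e^{-it'B})u,v\>=i\int_{t'}^t\<V_2e^{-i\tau B}u,V_1e^{-i\tau A}v\>d\tau.
\end{align*}
Cauchy--Schwarz in $\tau$ and the $A$-smoothness of $V_1$ bound the right-hand side by $C\norm{V_2e^{-i\cdot B}u}_{L^2((t',t);\H)}\norm v$; taking the supremum over $\norm v=1$ and using that $V_2$ is $B$-smooth---so that $\norm{V_2e^{-i\cdot B}u}_{L^2(\R;\H)}\le C\norm u<\infty$ and hence the tail over $(t',t)$ vanishes---shows that $e^{itA}e^{-itB}u$ is Cauchy as $t\to+\infty$. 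Since the propagators are unitary, $e^{itA}e^{-itB}$ is uniformly bounded, so a density argument produces $W^+(A,B)$ on all of $\H$, and the case $t\to-\infty$ is identical.

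The heart of the matter is the invariance identity, where I would exploit monotonicity through the stationary picture. By Remark \ref{remark_appendix_A_1} the boundary values $V_1(A-\lambda\mp i0)^{-1}$ and $V_2(B-\lambda\mp i0)^{-1}$ exist as $\H$-valued $L^2$-functions of $\lambda$, and these assemble into a stationary representation of $W^\pm(A,B)$ as an integral over the spectral parameter $\lambda\in(0,\infty)$. Because $f\in C^2(0,\infty)$ with $f'>0$, the map $\lambda\mapsto f(\lambda)$ is a diffeomorphism of $(0,\infty)$ onto its image, and the corresponding data for $f(A),f(B)$ are recovered by the substitution $\eta=f(\lambda)$: the resolvent boundary values $(f(A)-\eta\mp i0)^{-1}$ near $\eta=f(\lambda)$ reduce to $(A-\lambda\mp i0)^{-1}$ up to the Jacobian $f'(\lambda)$, and, crucially, the positivity of $f'$ preserves the $\mp i0$ prescription so that no interchange $\pm\leftrightarrow\mp$ occurs. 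The change of variables then identifies the stationary representation of $W^\pm(f(A),f(B))$ with that of $W^\pm(A,B)$, giving both the existence of the former and the asserted equality. Equivalently, once $W^\pm(A,B)$ is in hand one may invoke the classical Birman--Kato invariance principle, for which it suffices that $f$ be admissible on the spectrum; the hypotheses $f\in C^2(0,\infty)$ and $f'>0$ guarantee admissibility away from the edge $\lambda=0$.

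I expect this invariance step to be the main obstacle, for three reasons. First, the factorization $A-B=V_1^*V_2$ holds only in the form sense, so every manipulation of $f(A)-f(B)$---whether through the stationary representation or through a double-operator-integral identity $f(A)-f(B)=\iint\tfrac{f(x)-f(y)}{x-y}\,dE_A(x)\,V_1^*V_2\,dE_B(y)$---must be routed through the smooth-perturbation framework rather than performed on operator domains directly, and it is the $C^2$ regularity that controls the resulting divided-difference symbol and the stationary-phase remainder. Second, the spectral edge $\lambda=0$, where $f'$ or the boundary-value data may degenerate (as for $f(\lambda)=\sqrt\lambda$, where $f'(\lambda)\to\infty$), and the behaviour as $\lambda\to\infty$ force an energy localization $u=E_B(\Delta)u$ with $\Delta$ a compact subinterval of $(0,\infty)$, followed by a density argument; this is where the absence of zero-energy eigenvalues and resonances in the applications is tacitly used. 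Third, tracking the orientation of the limiting-absorption boundary values under $\eta=f(\lambda)$ is exactly what makes $f'>0$ indispensable: reversing the sign would yield $W^{\mp}$ in place of $W^{\pm}$.
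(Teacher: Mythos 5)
Your first stage (Cook's method for $W^\pm(A,B)$ from the $A$-smoothness of $V_1$ and $B$-smoothness of $V_2$) is correct and is exactly what the paper does. The problem is the second stage, which is the actual content of the theorem, and there you do not give a proof: you offer a sketch of a stationary change-of-variables argument and, ``equivalently,'' an appeal to the classical Birman--Kato invariance principle. The latter is circular here. The classical invariance principle is a theorem about trace-class (or similarly structured) perturbations; it is \emph{not} a consequence of the mere existence of $W^\pm(A,B)$ together with admissibility of $f$, and indeed the naive invariance principle fails in general. The statement you are asked to prove \emph{is} the invariance principle for smooth perturbations (due to Kako--Yajima), so it cannot be quoted as a black box. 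The stationary route is in principle viable, but as you yourself note, $f(A)-f(B)$ does not inherit the factorization $V_1^*V_2$, so before any substitution $\eta=f(\lambda)$ can be performed you would need to construct a stationary representation for $W^\pm(f(A),f(B))$ and prove it coincides with the time-dependent limit --- which is essentially the whole theorem again. Flagging these obstacles, as you do, is not the same as overcoming them.

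For comparison, the paper's mechanism is time-dependent and avoids both issues. Having $W^+:=W^+(A,B)$, the intertwining property gives $e^{itf(A)}W^+e^{-itf(B)}=W^+$, hence
\begin{align*}
e^{itf(A)}e^{-itf(B)}=e^{itf(A)}(I-W^+)e^{-itf(B)}+W^+,
\end{align*}
so everything reduces to showing $(I-W^+)e^{-itf(B)}E_B(J)\to 0$ strongly for compact $J\Subset(0,\infty)$ (density handles the rest; no absence of zero resonances is needed, contrary to what you suggest). One then writes $I-W^+=-i\int_0^\infty e^{isA}V_1^*V_2e^{-isB}\,ds$, uses the $A$-smoothness of $V_1$ to bound the result by $\norm{V_2e^{-isB}e^{-itf(B)}E_B(J)u}_{L^2_s\H}$, and expresses this quantity via Stone's formula and the limiting absorption boundary values of Remark \ref{remark_appendix_A_1} as an oscillatory integral $\int_J e^{-is\lambda}e^{-itf(\lambda)}w(\lambda)\,d\lambda$ with $w\in L^2(J;\H)$. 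After approximating $w$ by simple functions, integration by parts in $\lambda$ --- using that the phase derivative $s+tf'(\lambda)$ is bounded below because $s,t>0$ and $f'>0$, with $f''$ controlling the remainder --- gives decay $O(t^{-1})$ pointwise in $s$, and Plancherel plus dominated convergence finishes. This is where $f'>0$ and $f\in C^2$ actually enter; your proposal correctly intuits that they must play the role of preventing stationary phase, but never produces the estimate in which they do so.
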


%remark
\begin{remark}
A typical choice of $f$ is $f(\lambda)=\lambda^{\alpha}$ with $\alpha>0$. In particular, the choice $f(\lambda)=\sqrt\lambda$ corresponds to the scattering for the half-wave equations. 
\end{remark}

%proof
\begin{proof}[Proof of Theorem \ref{theorem_appendix_A_2}]
We consider the case $t\to\infty$ only. The same argument as that in the Step 1 in the proof of Theorem \ref{theorem_2_2} yields the existence of $W^+:=W^+(A,B)$. Then, by the intertwining property, we have $e^{itf(A)}W^+e^{-itf(B)}=W^+$. Hence,
$$
e^{itf(A)}e^{-itf(B)}=e^{itf(A)}(I-W^+)e^{-itf(B)}+W^+.
$$
It remains to show that $(I-W^+)e^{-itf(B)}$ converges to zero strongly in $\H$ as $t\to\infty$. Since $\norm{(I-W^+)e^{-itf(B)}}\le2$, it is enough to prove
\begin{align}
\label{theorem_appendix_A_2_proof_1}
\slim_{t\to\infty}(I-W^+)e^{-itf(B)}E_B(J)=0
\end{align}
with some finite interval $J\Subset (0,\infty)$ by the density argument. Let $u\in D(B)$ and $v\in D(A)$ with $\norm{u}=\norm{v}=1$. In what follows, we let  $L^2_s\H:=L^2((0,\infty)_s;\H)$. It follows from the formula
$$
W^+=I+i\int_0^\infty e^{isA}V_1^*V_2e^{-isB}ds
$$
that
\begin{align*}
|\<(I-W_+)e^{-itf(B)}E_B(J)u,v\>|
&\le \int_0^\infty |\<V_2e^{-isB}e^{-itf(B)}E_B(J)u,V_1e^{-isA}v\>|ds\\
&\le C\norm{V_2e^{-isB}e^{-itf(B)}E_B(J)u}_{L^2_s\H}\norm{V_1e^{-isA}v}_{L^2_s\H}\\
&\le C\norm{V_2e^{-isB}e^{-itf(B)}E_B(J)u}_{L^2_s\H}
\end{align*}
where we have used the $A$-smoothness of $V_1$ in the last line. By the duality, we thus obtain
\begin{align}
\label{theorem_appendix_A_2_proof_2}
\norm{(I-W_+)e^{-itf(B)}E_B(J)u}\le C\norm{V_2e^{-isB}e^{-itf(B)}E_B(J)u}_{L^2_s\H}.
\end{align}
Using the duality we rewrite the right hand side as
\begin{align}
\label{theorem_appendix_A_2_proof_3}
\norm{V_2e^{-isB}e^{-itf(B)}E_B(J)u}_{L^2_s\H}=\sup_{\norm{G}_{L^2_s\H}=1}|\<\!\<V_2e^{-isB}e^{-itf(B)}E_B(J)u,G\>\!\>|,
\end{align}
where $\<\!\<F,G\>\!\>:=\int_0^\infty \<F(s),G(s)\>ds$ is the inner product in $L^2_s\H$. By the density argument, we may assume that $G:(0,\infty)\to \H$ is a simple function and write $G=\sum_{j=1}^N\mathds1_{E_j}(s)g_j$ where $g_j\in \H$ and $E_j\cap E_k=\emptyset$ if $j\neq k$. By the spectral decomposition theorem, Stone's formula and Remark \ref{remark_appendix_A_1}, $\<V_2e^{-isB}e^{-itf(B)}E_B(J)u,g_j\>$ can be written in the form
\begin{align}
\label{theorem_appendix_A_2_proof_4}
\<V_2e^{-isB}e^{-itf(B)}E_B(J)u,g_j\>=\frac{1}{\pi }\int_J e^{-is\lambda}e^{-itf(\lambda)}\<V_2\Im (B-\lambda-i0)^{-1}u,g_j\>d\lambda,
\end{align}
where $\Im (B-\lambda-i0)^{-1}= (2i)^{-1}((B-\lambda-i0)^{-1}-(B-\lambda+i0)^{-1})$. 
Let $$w(\lambda):=\pi^{-1}V_2\Im (B-\lambda-i0)^{-1}u.$$ The $B$-smoothness of $V_2$ and \eqref{remark_appendix_A_1_1} then imply that
$$
\norm{w}_{L^1(J;\H)} \le |J|^{1/2}\norm{w}_{L^2(J;\H)}\le C_J
$$
with some $C_J$ independent of $\ep>0$. By \eqref{theorem_appendix_A_2_proof_4} and Fubini's theorem, we thus can write 
\begin{align}
\nonumber
\<\!\<V_2e^{-isB}e^{-itf(B)}E_J(B)u,G\>\!\>
&=\sum_{j=1}^N\int_{E_j}\int_J e^{-is\lambda}e^{-itf(\lambda)}\<w (\lambda),g_j\>d\lambda ds\\
%&=\lim_{\ep\searrow0}\sum_{j=1}^N\int_{E_j}\int_J e^{-is\lambda}e^{-itf(\lambda)}\<w (\lambda),g_j\>d\lambda ds\\
\label{theorem_appendix_A_2_proof_5}
&=\int_0^\infty \left<\int_Je^{-is\lambda}e^{-itf(\lambda)}w(\lambda)d\lambda,G(s)\right> ds.
\end{align}
Let us set
$$
T_w(t,s):=\int_Je^{-is\lambda}e^{-itf(\lambda)}w(\lambda)d\lambda.
$$
Since $T_t(s)$ is a Fourier transform of $\mathds1_{J}(\lambda)e^{-itf(\lambda)}w(\lambda)$, the Plancherel theorem yields
\begin{align}
\label{theorem_appendix_A_2_proof_6}
\int_0^\infty \norm{T_w(t,s)}^2ds\le \int \norm{w(\lambda)}^2d\lambda\le C.
\end{align}
This, combined with \eqref{theorem_appendix_A_2_proof_2}, \eqref{theorem_appendix_A_2_proof_3} and \eqref{theorem_appendix_A_2_proof_5}, shows
\begin{align}
\label{theorem_appendix_A_2_proof_7}
\norm{(I-W_+)e^{-itf(B)}E_B(J)u}\le C\norm{T_w(t,\cdot)}_{L^2_s\H}. 
\end{align}
It remains to show $\norm{T_w(t,\cdot)}_{L^2_s\H}\to 0$ as $t\to\infty$. Let $\ep>0$ and take a simple function $\tilde G:(0,\infty)\to \H$  of the form $\tilde G(\lambda)=\sum_{j=1}^N\mathds1_{(a_j,b_j)}(\lambda)\tilde g_j$ with $\tilde g_j\in \H$, $(a_j,b_j)\Subset J\Subset (0,\infty)$ such that $\norm{w-\tilde G}_{L^2(\R_\lambda;\H)}<\ep$. Then $\norm{T_{\tilde G}(t,\cdot)}_{L^2_s\H}\le C$ and 
$$
\norm{T_w(t,\cdot)-T_{\tilde G}(t,\cdot)}_{L^2_s\H}\le C\ep 
$$
by \eqref{theorem_appendix_A_2_proof_6}. On the other hand, taking the equality
$$
e^{-is\lambda}e^{-itf(\lambda)}=\frac{i}{s+tf'(\lambda)}\frac{d}{d\lambda}\left(e^{-is\lambda}e^{-itf(\lambda)}\right)
$$
into account, if we set $c_j=\inf_{(a_j,b_j)} f'(\lambda)>0$ then, for each $j=1,...,N$,
$$
\left|\int_{a_j}^{b_j}e^{-is\lambda}e^{-itf(\lambda)}d\lambda\right|\le \frac{1}{s+tf'(b)}+\frac{1}{s+tf'(a)}+\frac{t}{(s+tc_j)^2}\int_{a_j}^{b_j}|f''(\lambda)|d\lambda.
$$
Hence, as $t\to\infty$, 
$$
\norm{T_{\tilde G}(t,s)}_{\H}\le C|t|^{-1}\sum_{j=1}^N\norm{\tilde g_j}\le C|t|^{-1}\to 0
$$
for any $s>0$. By the dominated convergence theorem, $\norm{T_{\tilde G}(t,\cdot)}_{L^2_s\H}\to0$ as $t\to \infty$. Thus, 
$$
\limsup_{t\to \infty}\norm{T_w(t,\cdot)}_{L^2_s\H}\le C\ep. 
$$
Since $\ep>0$ is arbitrarily small, we have  $\norm{T_w(t,\cdot)}_{L^2_s\H}\to 0$ as $t\to\infty$. Together with \eqref{theorem_appendix_A_2_proof_7}, this implies \eqref{theorem_appendix_A_2_proof_1} and thus complete the proof. 
\end{proof}

%%%%%%%%%% Bibliography %%%%%%%%%%%%%%%%%%%

\end{document}